\documentclass[11pt,reqno]{amsart}
\usepackage{a4wide}
\usepackage{amssymb}
\usepackage{amsthm}
\usepackage{amsmath}
\usepackage{amscd}
\usepackage{verbatim,url}
\usepackage{color,cancel}
\usepackage[mathscr]{eucal}
\allowdisplaybreaks

\newtheorem{theorem}{Theorem}
\newtheorem{lemma}[theorem]{Lemma}
\newtheorem{corollary}[theorem]{Corollary}

\newtheorem{proposition}[theorem]{Proposition}

\theoremstyle{remark}

\numberwithin{theorem}{section} \numberwithin{equation}{section}
\numberwithin{figure}{section}


\begin{document}
\title[]{Bailey pairs and strange identities}

\author{Jeremy Lovejoy}
\address{CNRS,
Universit\'e de Paris,
B\^atiment Sophie Germain, Case courier 7014,
8 Place Aur\'elie Nemours,
FRANCE 75205 Paris Cedex 13, FRANCE}
\email{lovejoy@math.cnrs.fr}

\date{\today}
\subjclass[2020] {33D15}
\keywords{Bailey pairs, strange identities, Andrews-Gordon identities}
\thanks{}

\begin{abstract}
Zagier introduced the term ``strange identity" to describe an asymptotic relation between a certain $q$-hypergeometric series and a partial theta function at roots of unity.   We show that behind Zagier's strange identity lies a statement about Bailey pairs.   Using the iterative machinery of Bailey pairs then leads to many families of multisum strange identities, including Hikami's generalization of Zagier's identity.   
\end{abstract}

\maketitle

\section{Introduction and Statement of Results}
A \emph{Bailey pair} relative to $(a,q)$ is a pair of sequences $(\alpha_n,\beta_n)_{n \geq 0}$ satisfying
\begin{equation} \label{pairdef}
\beta_n = \sum_{k=0}^n \frac{\alpha_k}{(q)_{n-k}(aq)_{n+k}}. 
\end{equation} 
Here we have used the standard $q$-hypergeometric notation, 
\begin{equation*}
(a)_n = (a;q)_n = \prod_{k=1}^{n} (1-aq^{k-1}),
\end{equation*}
defined for integers $n \geq 0$ and in the limit as $n \to \infty$.    Bailey pairs are one of the principal structural elements in the theory of $q$-hypergeometric series.   Much of their power comes from the fact that Bailey pairs give rise to new Bailey pairs, and they do so in many different ways.    This leads to an iterative ``machinery" that produces infinite families of identities starting from a single identity.   For example, once one understands how to use Bailey pairs to prove the Rogers-Ramanujan identities,
\begin{align*}
\sum_{n \geq 0} \frac{q^{n^2}}{(q)_n} &= \prod_{n \equiv 1,4 \pmod{5}} \frac{1}{1-q^n} \\
\intertext{and}
\sum_{n \geq 0} \frac{q^{n^2+n}}{(q)_n} &= \prod_{n \equiv 2,3 \pmod{5}} \frac{1}{1-q^n},
\end{align*}   
the Bailey machinery produces the entire family of Andrews-Gordon identities,
\begin{equation} \label{AG}
\sum_{n_{k-1} \geq \cdots \geq n_1 \geq 0} \frac{q^{n_1^2 + \cdots + n_{k-1}^2 + n_1 + \cdots + n_{k-i}}}{(q)_{n_{k-1}-n_{k-2}} \cdots (q)_{n_2-n_1}(q)_{n_1}} = \prod_{n \not \equiv 0, \pm i \pmod{2k+1}} \frac{1}{1-q^n}.
\end{equation}
Here $1 \leq i \leq k$.
As a bonus, such identities often arise naturally in number theory, combinatorics, algebra, physics, or knot theory.   For more on Bailey pairs and their applications, see \cite{An1,An2,An3,Mc1,Wa1}

Here we consider the role of Bailey pairs in certain $q$-series identities which are not really identities at all.   The term \emph{strange identity} was first used by Zagier \cite{Za1} to describe the fact that
\begin{equation} \label{Zagierstrange}
\sum_{n \geq 0} (q)_n ``=" - \frac{1}{2} \sum_{n \geq 1} n \left( \frac{12}{n} \right)q^{(n^2-1)/24}.
\end{equation}
What the symbol $``="$ means here is not that the identity holds in any classical sense, but that both sides agree to infinite order at any root of unity $\zeta$.    Somewhat more specifically, replacing $q$ by $\zeta e^{-t}$ and letting $t \to 0^+$, the right-hand side has an asymptotic expansion as a power series in $t$, and this power series is given by the left-hand side at $q=\zeta e^{-t}$.  See \cite{Ah-Ki-Lo1} or  \cite{Za1} for a much more detailed discussion.  

Hikami \cite{Hi1}  gave an elegant generalization of Zagier's strange identity.   To state it, 
recall the $q$-binomial coefficient defined by
\begin{equation}\label{qbincoeff}
\begin{bmatrix} n \\ k \end{bmatrix} =  \begin{bmatrix} n \\ k \end{bmatrix}_q = 
\begin{cases}
\frac{(q)_n}{(q)_{n-k}(q)_k}, &\text{if $0 \leq k \leq n$}, \\
0, &\text{otherwise}.
\end{cases}
\end{equation}
Then for $0 \leq a \leq k-1$ Hikami showed that 
\begin{equation} \label{Hikamistrange}
\begin{aligned}
\sum_{n_1,\dots,n_k \geq 0}& (q)_{n_k}q^{n_1^2+ \cdots +n_{k-1}^2 + n_{a+1} + \cdots + n_{k-1}} \prod_{i=1}^{k-1} \begin{bmatrix} n_{i+1} + \delta_{i,a} \\ n_i \end{bmatrix}  \\
&``=" -\frac{1}{2} \sum_{n \geq 0} n\chi_{8k+4}^{(a)}(n)q^{\frac{n^2 - (2k-2a-1)^2}{8(2k+1)}},
\end{aligned}
\end{equation}
where $\chi_{8k+4}^{(a)}(n)$ is the even periodic function modulo $8k+4$ defined by
\begin{equation} \label{Hikamichar}
\chi_{8k+4}^{(a)}(n) = 
\begin{cases}
1, &\text{if $n \equiv 2k-2a-1$ or $6k+2a+5 \pmod{8k+4}$}, \\
-1, &\text{if $n \equiv 2k+2a+3$ or $6k-2a+1 \pmod{8k+4}$}, \\
0, &\text{otherwise}.
\end{cases}
\end{equation}

Although it is not our primary focus here, we note that strange identities have a number of interesting applications.    They can be used to establish the quantum modularity of (appropriate normalizations of) the corresponding $q$-series, to find formulas for the values of these series at roots of unity, and to give $q$-hypergeometric generating functions for values of certain $L$-functions at negative integers.   See \cite{An-Ji-On1,Br-Ro1,Go-Os1,Hi1,Za1}, for example.    Strange identities also play a key role in the study of congruences and asymptotics for the coefficients of the corresponding $q$-series at $q=1-q$ \cite{Ah-Ki1,Ah-Ki-Lo1,An-Se1,Bietal,Ga1,Go1,Gu-Ke-Ro1,St1}.

Hikami's proof of the strange identities \eqref{Hikamistrange} involved some long and impressive computations using $q$-difference equations \cite{Hi1}.   In this paper we show how these identities can be understood in the context of Bailey pairs.   In addition to providing a streamlined proof, this leads us to the discovery of many more families of strange identities.   Theorem \ref{main} below contains a selection of some of the nicest of these.   Note that unlike \eqref{Zagierstrange} and \eqref{Hikamistrange}, these strange identities only hold at appropriate subsets of roots of unity -- namely, those which do not cause zeros in the denominators of the $q$-hypergeometric series.  
\begin{theorem} \label{main}
Assume that $0 \leq a \leq k-1$.   
\begin{itemize}
\item[($i$)] Let $\chi_{4k}(n)$ be the even periodic function modulo $4k$ defined by
\begin{equation} \label{char1}
\chi_{4k}(n) = 
\begin{cases}
1, &\text{if $n \equiv k-1$ or $3k+1 \pmod{4k}$}, \\
-1, &\text{if $n \equiv k+1$ or $3k-1 \pmod{4k}$}, \\
0, &\text{otherwise}.
\end{cases}
\end{equation}
Then 
\begin{equation} \label{family1}
\begin{aligned}
\sum_{n_1,\dots,n_k \geq 0}& (q)_{n_k}\frac{q^{n_1^2+ \cdots +n_{k-1}^2 + n_{1} + \cdots + n_{k-1}}}{(-q)_{n_1}} \prod_{i=1}^{k-1} \begin{bmatrix} n_{i+1} \\ n_i \end{bmatrix} \\
&``=" -(1+\delta_{k,1})\sum_{n \geq 0} n\chi_{4k}(n)q^{\frac{n^2 - (k-1)^2}{4k}}
\end{aligned}
\end{equation}
and
\begin{equation} \label{family2}
\begin{aligned}
\sum_{n_1,\dots,n_k \geq 0}& (q^2;q^2)_{n_k}\frac{q^{2n_1^2+2n_1+ \cdots +2n_{k-1}^2 + 2n_{k-1}}(q;q^2)_{n_1}}{(-q)_{2n_1+1}} \prod_{i=1}^{k-1} \begin{bmatrix} n_{i+1} \\ n_i \end{bmatrix}_{q^2} \\
&``=" -(1+\delta_{k,1})\frac{1}{2}\sum_{n \geq 0} n\chi_{8k-4}(n)q^{\frac{n^2 - (2k-2)^2}{8k-4}}.
\end{aligned}
\end{equation}
 \\
\item[($ii$)] Let $\chi_{8k}^{(a)}(n)$ be the even periodic function modulo $8k$ defined by
\begin{equation} \label{char2}
\chi_{8k}^{(a)}(n) = 
\begin{cases}
1, &\text{if $n \equiv 2k-2a-1$ or $6k+2a+1 \pmod{8k}$}, \\
-1, &\text{if $n \equiv 2k+2a+1$ or $6k-2a-1 \pmod{8k}$}, \\
0, &\text{otherwise}.
\end{cases}
\end{equation}
Then 
\begin{equation} \label{family3}
\begin{aligned}
\sum_{n_1,\dots,n_k \geq 0}& (q^2;q^2)_{n_k}\frac{q^{2n_1^2+ \cdots +2n_{k-1}^2 + 2n_{a+1} + \cdots + 2n_{k-1}}}{(-q;q^2)_{n_1+\delta_{a,0}}} \prod_{i=1}^{k-1} \begin{bmatrix} n_{i+1}  + \delta_{i,a}\\ n_i \end{bmatrix}_{q^2} \\
&``=" -\frac{1}{2}\sum_{n \geq 0} n\chi_{8k}^{(a)}(n)q^{\frac{n^2 - (2k-2a-1)^2}{8k}}.
\end{aligned}
\end{equation} \\
\item[($iii$)] Let $\chi_{4k-2}^{(a)}(n)$ be the even periodic function modulo $4k-2$ defined by
\begin{equation} \label{char3}
\chi_{4k-2}^{(a)}(n) = 
\begin{cases}
1, &\text{if $n \equiv  \pm (2k-2a-1) \pmod{4k-2}$}, \\
0, &\text{otherwise}.
\end{cases}
\end{equation}
Then
\begin{equation} \label{family4}
\begin{aligned}
\sum_{n_1,\dots,n_k \geq 0}& (q)_{n_k}\frac{q^{n_1^2+ \cdots +n_{k-1}^2 + n_{a+1} + \cdots + n_{k-1}}(-1)_{n_1+\delta_{a,0}}}{(q;q^2)_{n_1+\delta_{a,0}}} \prod_{i=1}^{k-1} \begin{bmatrix} n_{i+1} + \delta_{i,a} \\ n_i \end{bmatrix} \\
&``=" - (1+\delta_{a,0})\frac{1}{2}\sum_{n \geq 0} n\chi_{4k-2}^{(a)}(n)q^{\frac{n^2 - (2k-2a-1)^2}{8(2k-1)}}.
\end{aligned}
\end{equation}
\end{itemize}
\end{theorem}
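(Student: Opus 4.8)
The plan is to reduce everything to a single ``seed'' strange identity at the level of Bailey pairs and then iterate. First I would isolate the statement behind Zagier's identity \eqref{Zagierstrange}: I claim there is a Bailey pair $(\alpha_n,\beta_n)$ relative to $(a,q)$ (for suitable specializations of $a$) for which the associated $q$-hypergeometric series $\sum_n \beta_n$-type sum is, up to elementary factors, $\sum_n (q)_n$, while $\sum_n \alpha_n$-type data encodes the partial theta function $\sum_{n\ge 1} n\left(\frac{12}{n}\right)q^{(n^2-1)/24}$ in the asymptotic sense. Concretely, one starts from a known explicit Bailey pair (for instance the ones arising from the $q$-Gauss sum or from Slater's lists) whose $\alpha_n$ is a theta-type expression and whose $\beta_n$ is essentially $(q)_n$ or a close relative, and one checks the ``$=$'' relation by the standard argument: insert $q=\zeta e^{-t}$, use that $(q)_n$ eventually vanishes to high order together with an Euler--Maclaurin / Abel-summation estimate for the partial theta side, exactly as in \cite{Ah-Ki-Lo1,Za1}. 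This establishes the seed.

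Next I would run the Bailey chain. Using the standard iteration lemma — if $(\alpha_n,\beta_n)$ is a Bailey pair relative to $(a,q)$ then so is $(\alpha_n',\beta_n')$ with $\alpha_n' = a^nq^{n^2}\alpha_n$ and $\beta_n' = \sum_{j} \frac{a^jq^{j^2}}{(q)_{n-j}}\beta_j$ (and its companions with extra parameters $\rho,\sigma$ inserted) — I would iterate $k-1$ times, in each step introducing one new summation index $n_i$ and one new factor $q^{n_i^2}$ (or $q^{2n_i^2}$ after a base change $q\mapsto q^2$) together with a $q$-binomial coefficient $\begin{bmatrix} n_{i+1}\\ n_i\end{bmatrix}$. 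After $k-1$ iterations the $\beta$-side becomes precisely the multisum on the left of \eqref{family1}–\eqref{family4}; the various linear terms $n_{a+1}+\cdots+n_{k-1}$, the shifts $\delta_{i,a}$ in the binomial coefficients, and the prefactors like $1/(-q)_{n_1}$ or $(-1)_{n_1}/(q;q^2)_{n_1}$ come from choosing at which step to insert the ``$\rho,\sigma$'' version of the iteration (this is the standard mechanism that turns Rogers–Ramanujan into Andrews–Gordon, cf. \eqref{AG}), and from the particular seed Bailey pair relative to $a=1$, $a=q$, or with a base change to $q^2$. Meanwhile the $\alpha$-side transforms in a completely explicit, closed form: each iteration multiplies $\alpha_n$ by $a^nq^{n^2}$, so after $k-1$ steps $\alpha_n$ is a monomial times the seed's theta expression, and summing over $n$ collapses (via the Jacobi triple product or a direct re-indexing) to the stated partial theta function with modulus $4k$, $8k-4$, $8k$, or $4k-2$ and the stated even periodic character $\chi$. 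The four families correspond to the four natural combinations of (seed relative to $a=1$ vs.\ $a=q$) $\times$ (base $q$ vs.\ base $q^2$), which is why the moduli come in the pattern $4k,\ 8k-4,\ 8k,\ 4k-2$.

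The key point making the ``$=$'' survive the iteration is that the relevant limit is essentially formal: since each iteration step is a finite linear combination with coefficients in $\mathbb{Z}[q]$ localized away from the bad roots of unity, an asymptotic identity ``$\text{series} = \text{partial theta}$'' valid to infinite order at $\zeta e^{-t}$ is preserved under the transformation, provided $\zeta$ avoids the zeros of the new denominators — which is exactly the restriction noted before the statement of Theorem \ref{main}. So the logical skeleton is: (1) prove the seed strange identity as a statement about a concrete Bailey pair; (2) quote the iteration lemma and its $\rho,\sigma$-refinement; (3) track the $\beta$-side to recognize each multisum; (4) track the $\alpha$-side and apply a triple-product/re-indexing step to recognize each partial theta function and its character.

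The main obstacle I anticipate is bookkeeping rather than conceptual: getting the exponent $\frac{n^2-(2k-2a-1)^2}{8k}$ (and its analogues) and the precise residue classes in \eqref{char1}–\eqref{char3} to fall out correctly requires carefully matching the quadratic form produced by $k-1$ iterations of $a^nq^{n^2}$ against the theta function of the seed, keeping track of the shift by $a$, the normalization constants $(1+\delta_{k,1})$ and $\frac12$, and the base change $q\mapsto q^2$. A secondary technical point is verifying that the seed asymptotic relation is genuinely of the ``infinite order at every root of unity'' type — this I would handle once and for all by the Euler–Maclaurin argument of \cite{Za1,Ah-Ki-Lo1} applied to $\sum n\chi(n)q^{(n^2-c)/N}$, and then never revisit it, since step (3) above shows the $q$-hypergeometric side is literally a $\mathbb{Z}[q]_{\mathrm{loc}}$-linear image of the seed's $\beta$-side and the theta side is literally a monomial times the seed's $\alpha$-side.
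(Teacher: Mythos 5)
Your proposal correctly identifies the general philosophy (seed Bailey pair plus iteration of the Bailey chain), but it has two genuine gaps that the paper's argument is specifically built to close.

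First, the factor $n$ in the partial theta functions $\sum_{n\ge 0} n\chi(n)q^{(n^2-c)/N}$ is nowhere produced by your argument. The Bailey chain plus the limiting Bailey lemma can only ever yield relations to $\sum_n \chi(n)q^{(n^2-c)/N}$ -- that is, Andrews--Gordon-type product identities such as Corollary \ref{AGvariant} -- not the ``half-derivative'' series appearing in \eqref{family1}--\eqref{family4}. The paper's mechanism is essential and absent from your plan: all Bailey pairs are constructed relative to $x^2q$ (or $(x^2q^2,q^2)$) with an extra parameter $x$, inserted into the specialization Lemma \ref{x^2qlemma}, and then one must add and subtract an infinite product to repair convergence at $|x|=1$ (the ``sum of tails'' step) before taking $\frac{d}{dx}\big|_{x=1}$; it is this differentiation that brings down the factor $n$, and it is the vanishing of $(q)_\infty$-type products at roots of unity that kills the correction terms. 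Relatedly, your claim that the asymptotic relation ``$=$'' is preserved under iteration because each step is a localized $\mathbb{Z}[q]$-linear combination does not hold up: the series $\sum_n (q)_n$ and its multisum analogues do not converge anywhere inside the unit disk, so there is no asymptotic statement to transport through the chain. The paper never iterates a strange identity; it iterates exact identities of sequences (Bailey pairs in the variable $x$) and extracts the asymptotic statement only once, at the very end.

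Second, the shifts $\delta_{i,a}$ in the $q$-binomial coefficients and the linear exponents $n_{a+1}+\cdots+n_{k-1}$ beginning at index $a+1$ do not come from inserting a $\rho,\sigma$-version of the Bailey lemma at a chosen step, as you suggest. They come from a parameter-shifting device that the paper has to develop from scratch in Section 3: Lemma \ref{aux3}, which converts a Bailey pair relative to $a$ into one relative to $aq$ with $\beta_n''=(1-q^{n+1})\beta_{n+1}$, built from Lemma \ref{recentlemma} and the inversion formula \eqref{pairdefbis}. The construction is: iterate $a$ times from a Slater pair relative to $x^2$, apply Lemma \ref{aux3} to move to $x^2q$, then iterate $k-1-a$ more times. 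Without this ingredient (or a substitute for it) your outline cannot produce the $a$-dependent families \eqref{family3}, \eqref{family4}, or Hikami's identities. The bookkeeping you flag as the ``main obstacle'' is real but secondary; the missing differentiation step and the missing parameter-shift lemma are the substantive gaps.
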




The rest of this paper is organized as follows.    In the next section we introduce the basic notions from the theory of Bailey pairs and prove Zagier's strange identity along with the base cases of the strange identities in Theorem \ref{main}.    To prove the multisum identities requires some further development of the theory of Bailey pairs, which we undertake in Section 3.   Section 4 is then devoted to Hikami's identities.   In Section 5 we complete the proof of Theorem \ref{main}, and  we close in Section 6 with some remarks.

\section{Bailey pairs And Zagier's identity}
Recall the definition of a Bailey pair from the introduction.   From now on we shall say ``Bailey pair relative to $a$" instead of ``Bailey pair relative to $(a,q)$" unless the second parameter is something other than $q$.    The \emph{Bailey lemma} says that if $(\alpha_n,\beta_n)$ is a Bailey pair relative to $a$, then so is $(\alpha_n',\beta_n')$, where 

\begin{equation} \label{alphaprimedef}
\alpha'_n = \frac{(\rho_1)_n(\rho_2)_n(aq/\rho_1 \rho_2)^n}{(aq/\rho_1)_n(aq/\rho_2)_n}\alpha_n
\end{equation} 

\noindent and

\begin{equation} \label{betaprimedef}
\beta'_n = \sum_{k=0}^n\frac{(\rho_1)_k(\rho_2)_k(aq/\rho_1 \rho_2)_{n-k} (aq/\rho_1 \rho_2)^k}{(aq/\rho_1)_n(aq/\rho_2)_n(q)_{n-k}} \beta_k.
\end{equation}
A useful limiting form of the Bailey lemma is found by putting \eqref{alphaprimedef} and \eqref{betaprimedef} into \eqref{pairdef} and letting $n \to \infty$, giving

\begin{equation} \label{limitBailey}
\sum_{n \geq 0} (\rho_1)_n(\rho_2)_n (aq/\rho_1 \rho_2)^n \beta_n = \frac{(aq/\rho_1)_{\infty}(aq/\rho_2)_{\infty}}{(aq)_{\infty}(aq/\rho_1 \rho_2)_{\infty}} \sum_{n \geq 0} \frac{(\rho_1)_n(\rho_2)_n(aq/\rho_1 \rho_2)^n }{(aq/\rho_1)_n(aq/\rho_2)_n}\alpha_n,
\end{equation}
provided both sides converge absolutely.   The case $(a,\rho_1,\rho_2) = (x^2q,xq,q)$ of \eqref{limitBailey} is recorded as the following lemma, which will play a key role throughout the paper.

\begin{lemma} \label{x^2qlemma}
If $(\alpha_n,\beta_n)$ is a Bailey pair relative to $x^2q$, then we have
\begin{equation*}
(1-x)\sum_{n \geq 0} (xq)_n(q)_n x^n \beta_n = (1-x^2q) \sum_{n \geq 0} \frac{(q)_n}{(x^2q)_n} x^n \alpha_n.
\end{equation*}
\end{lemma}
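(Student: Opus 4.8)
The plan is to derive the lemma as the specialization $(a,\rho_1,\rho_2)=(x^2q,xq,q)$ of the limiting form of the Bailey lemma recorded in \eqref{limitBailey}, exactly as indicated in the sentence preceding the statement. The first step is purely computational: with these choices one has $aq/\rho_1\rho_2 = x$, $aq/\rho_1 = xq$, and $aq/\rho_2 = x^2q$. Substituting, the left-hand side of \eqref{limitBailey} becomes $\sum_{n\ge 0}(xq)_n(q)_n x^n\beta_n$, while in the sum on the right-hand side the factor $(xq)_n$ cancels between numerator and denominator, leaving $\sum_{n\ge 0}\tfrac{(q)_n}{(x^2q)_n}x^n\alpha_n$.

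It then remains to simplify the infinite-product prefactor, which under this substitution equals $\tfrac{(xq)_\infty(x^2q)_\infty}{(x^2q^2)_\infty(x)_\infty}$. Peeling off the leading factors via $(x)_\infty=(1-x)(xq)_\infty$ and $(x^2q)_\infty=(1-x^2q)(x^2q^2)_\infty$ collapses this to $\tfrac{1-x^2q}{1-x}$, so \eqref{limitBailey} reads $\sum_{n\ge 0}(xq)_n(q)_n x^n\beta_n = \tfrac{1-x^2q}{1-x}\sum_{n\ge 0}\tfrac{(q)_n}{(x^2q)_n}x^n\alpha_n$; multiplying through by $1-x$ produces precisely the asserted identity. (Equivalently, one could bypass \eqref{limitBailey} and substitute \eqref{alphaprimedef}--\eqref{betaprimedef} with these parameter values directly into \eqref{pairdef} before letting $n\to\infty$, but invoking \eqref{limitBailey} is cleaner.)

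There is essentially no obstacle beyond this bookkeeping; the one point worth checking is the absolute-convergence proviso attached to \eqref{limitBailey}. As usual this is automatic at the level of formal power series in $q$, since every summand on both sides is a power series in $q$ whose order grows with $n$, and it also holds analytically for, say, $|q|<1$ and $|x|$ sufficiently small. Finally, none of the Pochhammer symbols or infinite products appearing in \eqref{limitBailey} degenerate under the substitution $(a,\rho_1,\rho_2)=(x^2q,xq,q)$, so the specialization is legitimate.
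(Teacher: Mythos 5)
Your proposal is correct and is exactly the paper's approach: the lemma is stated there as the specialization $(a,\rho_1,\rho_2)=(x^2q,xq,q)$ of \eqref{limitBailey}, and your computation of the parameters, the cancellation of $(xq)_n$, and the collapse of the product prefactor to $(1-x^2q)/(1-x)$ all check out. The paper simply records the result without writing out this bookkeeping, so your write-up just makes the same argument explicit.
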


We are now ready to prove Zagier's strange identity, along with the base cases of the families of identities in Theorem \ref{main}.   In each case we make use of a specialization of Slater's Bailey pair relative to $a$ \cite{Sl1},
\begin{equation} \label{Slaterpairalpha}
\alpha_n = \frac{(a)_n(1-aq^{2n})(-1)^nq^{\binom{n}{2}}(b)_n(c)_n\left(\frac{aq}{bc}\right)^n}{(q)_n(1-a)(\frac{aq}{b})_n(\frac{aq}{c})_n}
\end{equation}
and
\begin{equation} \label{Slaterpairbeta}
\beta_n = \frac{(\frac{aq}{bc})_n}{(q)_n(\frac{aq}{b})_n(\frac{aq}{c})_n}.
\end{equation}

\begin{proof}[Proof of \eqref{Zagierstrange}]
Take $a=x^2q$ and $b,c \to \infty$ in \eqref{Slaterpairalpha} and \eqref{Slaterpairbeta} to obtain the Bailey pair relative to $x^2q$,
\begin{equation} \label{BPZagieralpha}
\alpha_n = \frac{(x^2q)_n(1-x^2q^{2n+1})(-1)^nx^{2n}q^{n(3n+1)/2}}{(q)_n(1-x^2q)}
\end{equation}
and
\begin{equation} \label{BPZagierbeta}
\beta_n = \frac{1}{(q)_n}.
\end{equation} 
Using this in Lemma \ref{x^2qlemma} we obtain
\begin{equation*}
(1-x)\sum_{n \geq 0} (xq)_n x^n = \sum_{n \geq 0} (-1)^nx^{3n}q^{n(3n+1)/2}(1-x^2q^{2n+1}).
\end{equation*}
Note that for $|q|<1$, the right-hand side converges for all $x \in \mathbb{C}$ while the left-hand side requires $|x| <1$.  To remedy this, we follow Zagier and add and subtract $(x)_{\infty}$ on the left-hand side to obtain
\begin{equation*}
(1-x)\sum_{n \geq 0} \left( (xq)_n - (xq)_{\infty}\right)x^n + (xq)_{\infty} = \sum_{n \geq 0} (-1)^nx^{3n}q^{n(3n+1)/2}(1-x^2q^{2n+1}).
\end{equation*}
Now replace $x$ by $x^2$ on both sides, multiply by $x$, and take $\frac{d}{dx} | _{x=1}$.     The result is the ``sum of tails" identity,
\begin{equation*}
2\sum_{n \geq 0} \left((q)_n - (q)_{\infty} \right) + (q)_{\infty}\left(-1+ 2\sum_{n \geq 1} \frac{q^n}{1-q^n}\right) = -\sum_{n \geq 1} n \left(\frac{12}{n}\right)q^{(n^2-1)/24}.
\end{equation*}  
Letting $q$ approach a root of unity, we obtain Zagier's strange identity, since $(q)_{\infty}$ vanishes to infinite order.  
\end{proof}
We now proceed to give proofs of the base cases of the strange identities in Theorem \ref{main}.    These all use a specialization of \eqref{Slaterpairalpha} and \eqref{Slaterpairbeta} in Lemma \ref{x^2qlemma} and a short computation similar to the one above.

\begin{proof}[Proof of Theorem \ref{main} for $k=1$.]
In the Bailey pair in \eqref{Slaterpairalpha} and \eqref{Slaterpairbeta}, take $a=x^2q$, $b = -xq$, and $c \to \infty$ to obtain the Bailey pair relative to $x^2q$,
\begin{equation} \label{BPfamily1basealpha}
\alpha_n = \frac{(x^2q)_n(1-x^2q^{2n+1})(-1)^nx^nq^{n^2}}{(q)_n(1-x^2q)}
\end{equation}
and
\begin{equation} \label{BPfamily1basebeta}
\beta_n = \frac{1}{(q)_n(-xq)_n}.
\end{equation}
Using this in Lemma \ref{x^2qlemma}, we have
\begin{align*}
(1-x) \sum_{n \geq 0} \frac{(xq)_n}{(-xq)_n}x^n &= \sum_{n \geq 0}(-1)^nx^{2n}q^{n^2}(1-x^2q^{2n+1}) \\
&=  1 + 2\sum_{n \geq 1} \chi_4(n)q^{n^2/{4}}x^n,
\end{align*}
where $\chi_4(n)$ is defined in \eqref{char1}.  Adding and subtracting the product $(x)_{\infty}/(-xq)_{\infty}$ on the left-hand side, we obtain
\begin{equation*}
(1-x)\sum_{n \geq 0} \left( \frac{(xq)_n}{(-xq)_n} - \frac{(xq)_{\infty}}{(-xq)_{\infty}}\right)x^n + \frac{(xq)_{\infty}}{(-xq)_{\infty}} = \sum_{n \geq 0} (-1)^nx^{2n}q^{n^2}(1-x^2q^{2n+1}).
\end{equation*}
Taking $\frac{d}{dx} | _{x=1}$ on both sides gives
\begin{equation*}
-\sum_{n \geq 0} \left(\frac{(q)_n}{(-q)_n} - \frac{(q)_{\infty}}{(-q)_{\infty}} \right)  - 2\frac{(q)_{\infty}}{(-q)_{\infty}}\sum_{n \geq 1} \frac{q^n}{1-q^{2n}} = 2\sum_{n \geq 1} n \chi_4(n)q^{n^2/4}.
\end{equation*} 
Letting $q$ tend to an odd root of unity gives
\begin{equation*}
\sum_{n \geq 0} \frac{(q)_n}{(-q)_n} ``=" -2\sum_{n \geq 1} n \chi_4(n)q^{n^2/4}.
\end{equation*}
This is \eqref{family1} for $k=1$.   

Next, in \eqref{Slaterpairalpha} and \eqref{Slaterpairbeta}, let $q=q^2$, $a=x^2q^2$, $b = -xq$, and $c=-xq^2$.    The resulting Bailey pair relative to $(x^2q^2,q^2)$ is
\begin{equation} \label{BPfamily2basealpha}
\alpha_n = \frac{(x^2q^2;q^2)_n(1-xq^{2n+1})(-1)^nq^{n^2}}{(q^2;q^2)_n(1-x^2q^2)}
\end{equation}  
and
\begin{equation} \label{BPfamily2basebeta}
\beta_n = \frac{(q;q^2)_n}{(q^2;q^2)_n(-xq)_{2n+1}}.
\end{equation}
Using this in Lemma \ref{x^2qlemma} we have
\begin{align*}
(1-x)\sum_{n \geq 0} \frac{(q;q^2)_n(xq^2;q^2)_n}{(-xq)_{2n+1}}x^n &= \sum_{n \geq 0} (-1)^nx^nq^{n^2}(1-xq^{2n+1}) \\
&= 1+ 2\sum_{n \geq 1} \chi_4(n)q^{n^2/4}x^{n/2}.
\end{align*}
Adding and subtracting the product $(q;q^2)_{\infty}(x;q^2)_{\infty}/(-xq)_{\infty}$ on the left-hand side, we have
\begin{align*}
(1-x)\sum_{n \geq 0} &\left(\frac{(q;q^2)_n(xq^2;q^2)_n}{(-xq)_{2n+1}} - \frac{(q;q^2)_{\infty}(xq^2;q^2)_{\infty}}{(-xq)_{\infty}}\right)x^n + \frac{(q;q^2)_{\infty}(xq^2;q^2)_{\infty}}{(-xq)_{\infty}} \\
&=  1+ 2\sum_{n \geq 1} \chi_4(n)q^{n^2/4}x^{n/2}.
\end{align*}
Taking $\frac{d}{dx} | _{x=1}$ on both sides gives
\begin{equation*}
-\sum_{n \geq 0} \left(\frac{(q)_{2n}}{(-q)_{2n+1}} - \frac{(q)_{\infty}}{(-q)_{\infty}}\right) - \frac{(q)_{\infty}}{(-q)_{\infty}}\sum_{n \geq 1} \frac{q^n}{1-q^{2n}} = \sum_{n \geq 0} n\chi_4(n)q^{n^2/4}.
\end{equation*}
Letting $q$ tend to an odd root of unity results in the strange identity
\begin{equation*}
\sum_{n \geq 0} \frac{(q)_{2n}}{(-q)_{2n+1}} ``=" -\sum_{n \geq 0} n\chi_4(n)q^{n^2/4}.
\end{equation*}
This is \eqref{family2} for $k=1$.

Now let $q=q^2$, $a=x^2q^2$, $b=-xq$, and $c \to \infty$ in \eqref{Slaterpairalpha} and \eqref{Slaterpairbeta} to obtain the Bailey pair relative to $(x^2q^2,q^2)$,
\begin{equation*}
\alpha_n = \frac{(x^2q^2;q^2)_n(1-xq^{2n+1})(-1)^nx^nq^{2n^2+n}}{(q^2;q^2)_n(1-x^2q^2)}
\end{equation*}
and
\begin{equation*}
\beta_n = \frac{1}{(q^2;q^2)_n(-xq;q^2)_{n+1}}.
\end{equation*}
Using this in Lemma \ref{x^2qlemma} we have
\begin{align*}
(1-x)\sum_{n \geq 0} \frac{(xq^2;q^2)_n}{(-xq;q^2)_{n+1}}x^n &= \sum_{n \geq 0}(-1)^nx^{2n}q^{2n^2+n}(1-xq^{2n+1}) \\
&= \sum_{n \geq 0} \chi_8^{(0)}(n)q^{(n^2-1)/8}x^{(n-1)/2},
\end{align*}
where $\chi_8^{(0)}(n)$ is defined in \eqref{char2}.   Adding and subtracting the product $(x;q^2)_{\infty}/(-xq;q^2)_{\infty}$ on the left-hand side, we obtain
\begin{equation*}
(1-x)\sum_{n \geq 0} \left(\frac{(xq^2;q^2)_n}{(-xq;q^2)_{n+1}} - \frac{(xq^2;q^2)_{\infty}}{(-xq;q^2)_{\infty}}\right) x^n + \frac{(xq^2;q^2)_{\infty}}{(-xq;q^2)_{\infty}} =  \sum_{n \geq 0} \chi_8^{(0)}(n)q^{(n^2-1)/8}x^{(n-1)/2}.
\end{equation*}
Taking $\frac{d}{dx} | _{x=1}$ on both sides gives
\begin{align*}
-&\sum_{n \geq 0} \left(\frac{(q^2;q^2)_n}{(-q;q^2)_{n+1}} - \frac{(q^2;q^2)_{\infty}}{(-q;q^2)_{\infty}}\right)   \\
-& \frac{(q^2;q^2)_{\infty}}{(-q;q^2)_{\infty}}\left(-\frac{1}{2} + \sum_{n \geq 1} \frac{q^{2n}}{1-q^{2n}} + \sum_{n \geq 1}\frac{q^{2n-1}}{1+q^{2n-1}}\right) \\
&= \frac{1}{2}\sum_{n \geq 0} n\chi_8^{(0)}(n)q^{(n^2-1)/8}.
\end{align*}
Here we have used the fact that
\begin{align*}
\sum_{n \geq 0} \chi_8^{(0)}(n)q^{(n^2-1)/8} &= \sum_{n \in \mathbb{Z}} (-1)^nq^{2n^2+n} \\
&= \frac{(q^2;q^2)_{\infty}}{(-q;q^2)_{\infty}},
\end{align*}
by Jacobi's triple product identity
\begin{equation} \label{jtp}
\sum_{n \in \mathbb{Z}} z^nq^{n^2} = (-zq;q^2)_{\infty}(-q/z;q^2)_{\infty}(q^2;q^2)_{\infty}.
\end{equation}
Now letting $q$ tend to an $N$th root of unity where $N \not \equiv 2 \pmod{4}$ gives the strange identity
\begin{equation*}
\sum_{n \geq 0} \frac{(q^2;q^2)_n}{(-q;q^2)_{n+1}} ``=" -\frac{1}{2}\sum_{n \geq 0} n\chi_8^{(0)}(n)q^{(n^2-1)/8}.
\end{equation*} 
This is \eqref{family3} for $k=1$.

Finally,  take $a=x^2q$ and $b = -c = xq^{\frac{1}{2}}$ in \eqref{Slaterpairalpha} and \eqref{Slaterpairbeta} to obtain the Bailey pair relative to $x^2q$,
\begin{equation*}
\alpha_n = \frac{(x^2q)_nq^{\binom{n+1}{2}}}{(q)_n(1-x^2q)}
\end{equation*}
and 
\begin{equation*}
\beta_n = \frac{(-q)_n}{(q)_n(x^2q;q^2)_{n+1}}.
\end{equation*}
Using this in Lemma \ref{x^2qlemma} we have the identity
\begin{align*}
(1-x)\sum_{n \geq 0} \frac{(-q)_n(xq)_n}{(x^2q;q^2)_{n+1}}x^n &= \sum_{n \geq 0}x^nq^{\binom{n+1}{2}} \\
&= \sum_{n \geq 0} \chi_2^{(0)}(n)q^{(n^2-1)/8}x^{(n-1)/2},
\end{align*}
where $\chi_{2}^{(0)}(n)$ is defined in \eqref{char3}.    Adding and subtracting the infinite product $(-q)_{\infty}(x)_{\infty}/(x^2q;q^2)_{\infty}$ on the left hand side, we obtain
\begin{equation*}
(1-x)\sum_{n \geq 0} \left(\frac{(-q)_n(xq)_n}{(x^2q;q^2)_{n+1}} - \frac{(-q)_{\infty}(xq)_{\infty}}{(x^2q;q^2)_{\infty}}\right)x^n + \frac{(-q)_{\infty}(xq)_{\infty}}{(x^2q;q^2)_{\infty}} =  \sum_{n \geq 0} \chi_2^{(0)}(n)q^{(n^2-1)/8}x^{(n-1)/2}.
\end{equation*}
Taking $\frac{d}{dx} | _{x=1}$ on both sides gives
\begin{equation*}
\begin{aligned}
-\sum_{n \geq 0} \left(\frac{(q^2;q^2)_n}{(q;q^2)_{n+1}} - \frac{(q^2;q^2)_{\infty}}{(q;q^2)_{\infty}}\right) &+ \frac{(q^2;q^2)_{\infty}}{(q;q^2)_{\infty}}\left(\frac{1}{2} - \sum_{n \geq 1}\frac{q^n}{1-q^n} + 2\sum_{n \geq 1} \frac{q^{2n-1}}{1-q^{2n-1}}\right) \\ 
&=  \frac{1}{2}\sum_{n \geq 0} n\chi_2^{(0)}(n)q^{(n^2-1)/8}.
\end{aligned}
\end{equation*}
Here we have used the fact that 
\begin{equation*}
\sum_{n \geq 0} \chi_2^{(0)}(n)q^{(n^2-1)/8} = \frac{(q^2;q^2)_{\infty}}{(q;q^2)_{\infty}},
\end{equation*}
which follows from \eqref{jtp}.
Now letting $q$ tend to an even root of unity gives the strange identity
\begin{equation*}
\sum_{n \geq 0} \frac{(q^2;q^2)_n}{(q;q^2)_{n+1}}``=" -\sum_{n \geq 0} n\chi_2^{(0)}(n)q^{(n^2-1)/8}.
\end{equation*}
This is \eqref{family4} at $k=1$.
\end{proof}
We leave it to the interested reader to find more single-sum strange identities arising from Bailey pairs in Lemma \ref{x^2qlemma}.    This lemma is admittedly somewhat restrictive, due the presence of the term $(q)_n/(x^2q)_n$ in the summand of the right-hand side, but there must surely be other nice examples.  For now we turn to using multisum Bailey pairs in Lemma \ref{x^2qlemma} in order to prove Hikami's identity \eqref{Hikamistrange} and Theorem \ref{main}. 

\section{More on Bailey pairs}

Our goal in this section is to prove Lemma \ref{aux3} below.    We accomplish this via a sequence of auxiliary results.   We first record a lemma from \cite{Lo1}.   

\begin{lemma}{\cite[Lemma 3.2]{Lo1}} \label{recentlemma}
If $(\alpha_n,\beta_n)$ is a Bailey pair relative to $a$, then
so is $(\gamma^*_n - \gamma^*_{n-1},\beta^*_n)$, where
$\gamma^*_{-1} = 0$,
\begin{equation} \label{gammastarbis}
\gamma^*_n = \frac{(aq/b)_n(-b)^nq^{n(n+1)/2}}{(bq)_n}\sum_{r =
0}^n \frac{(b)_r(-b)^{-r}q^{-r(r-1)/2}\alpha_r}{(aq/b)_r},
\end{equation}
and
\begin{equation} \label{betastarbis}
\beta^*_n = \frac{(b)_nq^n}{(bq)_n}\beta_n.
\end{equation}
\end{lemma}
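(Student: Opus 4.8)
The plan is to verify directly that $(\gamma^*_n-\gamma^*_{n-1},\beta^*_n)$ satisfies the defining relation \eqref{pairdef} relative to $a$, that is,
\[
\beta^*_n=\sum_{k=0}^n\frac{\gamma^*_k-\gamma^*_{k-1}}{(q)_{n-k}(aq)_{n+k}}.
\]
Both sides here are linear in the sequence $(\alpha_n)$ once $\beta_n$ is expanded via \eqref{pairdef}, and an arbitrary sequence $(\alpha_n)$ determines a Bailey pair relative to $a$, so it suffices to check this identity coefficient by coefficient in the $\alpha_r$.

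First I would apply summation by parts on the right-hand side. Writing $c_k=\frac{1}{(q)_{n-k}(aq)_{n+k}}$ and using $\gamma^*_{-1}=0$, one gets $\sum_{k=0}^n(\gamma^*_k-\gamma^*_{k-1})c_k=\sum_{k=0}^n\gamma^*_k(c_k-c_{k+1})$ with the convention $c_{n+1}=0$, and a direct computation gives the uniform formula
\[
c_k-c_{k+1}=\frac{q^{n-k}(1-aq^{2k+1})}{(q)_{n-k}(aq)_{n+k+1}},\qquad 0\le k\le n,
\]
which in particular reproduces the boundary value $c_n$ at $k=n$. Substituting the definition \eqref{gammastarbis} of $\gamma^*_k$, interchanging the two summations, and extracting the coefficient of $\alpha_r$ on each side — on the left the contribution of $\beta^*_n=\frac{(b)_nq^n}{(bq)_n}\beta_n$ to $\alpha_r$ is $\frac{(b)_nq^n}{(bq)_n(q)_{n-r}(aq)_{n+r}}$ — the claim reduces, after cancelling common factors, to the single-index summation
\[
\sum_{k=r}^n\frac{(aq/b)_k(-b)^kq^{\binom{k}{2}}(1-aq^{2k+1})}{(bq)_k(q)_{n-k}(aq)_{n+k+1}}
=\frac{(b)_n(aq/b)_r(-b)^rq^{\binom{r}{2}}}{(bq)_n(q)_{n-r}(aq)_{n+r}(b)_r}.
\]

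To establish this I would shift $k\mapsto r+j$ and set $A=aq^{2r+1}$ and $B=bq^r$. Pulling out the factors not depending on $j$ and rewriting $1/(q)_{n-r-j}$ via $(q^{-(n-r)};q)_j$, the sum over $j$ becomes, up to an explicit prefactor depending only on $r$,
\[
\sum_{j=0}^{n-r}\frac{(A/B)_j\,(q^{-(n-r)})_j\,(1-Aq^{2j})\,(Bq^{n-r})^j}{(Bq)_j\,(Aq^{n-r+1})_j},
\]
which is precisely the $b=q$ case of the terminating very-well-poised ${}_6\phi_5$ summation (the $q$-analogue of Dougall's theorem); its value is $\frac{(1-B)(1-Aq^{n-r})}{1-Bq^{n-r}}$, and if one prefers, this evaluation can also be obtained by a short induction on $n-r$. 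Feeding it back and simplifying the Pochhammer ratios in $a$ and $b$ recovers exactly the right-hand side displayed above.

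I expect the main obstacle to be the bookkeeping in this last reduction: keeping track of all the $q$-powers and Pochhammer shifts through the index change $k\mapsto r+j$, and correctly recognizing the resulting series as the stated special case of the ${}_6\phi_5$ sum. The summation-by-parts step itself is routine, the only subtlety being the $k=n$ boundary term together with the convention $\gamma^*_{-1}=0$; as a preliminary check one can verify the assertion by hand for $n=0$ and $n=1$.
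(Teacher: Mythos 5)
Your argument is correct, but note that the paper does not actually prove this lemma: it is imported verbatim from \cite[Lemma 3.2]{Lo1}, so there is no internal proof to compare against, and your write-up amounts to an independent, self-contained verification. I checked the key steps: the summation-by-parts identity $c_k-c_{k+1}=\frac{q^{n-k}(1-aq^{2k+1})}{(q)_{n-k}(aq)_{n+k+1}}$ (with the correct boundary behaviour at $k=n$ and $\gamma^*_{-1}=0$) is right; the reduction to the displayed single-index identity is right (the stray factor $q^{n}$ coming from $q^{\binom{k}{2}+n}$ cancels against the $q^n$ in $\beta^*_n$); and after the shift $k\mapsto r+j$ with $A=aq^{2r+1}$, $B=bq^r$, the $j$-sum is exactly the case ``one numerator parameter equal to $q$'' of the terminating very-well-poised ${}_6\phi_5$, whose value $\frac{(1-B)(1-Aq^{n-r})}{1-Bq^{n-r}}$ (your sum carries no $1/(1-A)$ normalization, so this is consistent) feeds back to give precisely the right-hand side, the factor $(1-Aq^{n-r})=(1-aq^{n+r+1})$ cancelling the top factor of $(aq)_{n+r+1}$ and the factor $(1-B)=(1-bq^r)$ converting $(b)_r/(bq)_r$ into $(b)_n/(bq)_n$ up to the $(1-bq^n)$ in the denominator. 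The linearity reduction to coefficients of $\alpha_r$ is also legitimate, since an arbitrary $\alpha$-sequence determines a Bailey pair via \eqref{pairdef}. The one thing worth making explicit in a final write-up is the prefactor bookkeeping in the shift $k\mapsto r+j$, in particular the identity $\frac{1}{(q)_{N-j}}=\frac{(q^{-N})_j(-1)^jq^{Nj-\binom{j}{2}}}{(q)_N}$, which is what produces the argument $(Bq^{N})^j$; I verified that it does.
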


This may be used to deduce the following. 
\begin{lemma} \label{aux1}
If $(\alpha_n,\beta_n)$ is a Bailey pair relative to $a$, then so is $(\alpha_n',\beta_n')$, where
\begin{equation} \label{aux1beta}
\beta_n' = (1-q^n)\beta_n
\end{equation}
and
\begin{equation} \label{aux1alpha}
\alpha_n' = (1-q^n)\alpha_n + a^{n-1}q^{n^2-n}(1-aq^{2n}) \sum_{r=0}^{n-1} a^{-r}q^{-r^2} \alpha_r.
\end{equation}
\end{lemma}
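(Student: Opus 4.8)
The plan is to obtain Lemma~\ref{aux1} from Lemma~\ref{recentlemma} by specializing $b\to 0$ and then exploiting the linearity of the defining relation~\eqref{pairdef}. Note first that in~\eqref{betastarbis} we have $\beta_n^*=\frac{(b)_nq^n}{(bq)_n}\beta_n\to q^n\beta_n$ as $b\to 0$, and since $(1-q^n)\beta_n=\beta_n-q^n\beta_n$, this already identifies $\beta_n'$ as the difference of the given $\beta$-sequence and the $\beta$-sequence produced by this specialization. The legitimacy of the limit is not an issue, since for each fixed $n$ the identity in Lemma~\ref{recentlemma} is an identity of rational functions of $b$.

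The one step that requires care is the limit of $\gamma_n^*$ in~\eqref{gammastarbis}. As $b\to 0$ the factor $(aq/b)_m$ blows up while $(-b)^m$ vanishes, so the expression has the indeterminate form $0\cdot\infty$. Using $(aq/b)_m\sim(-a/b)^m q^{m(m+1)/2}$ as $b\to 0$ (with $m=n$ in the prefactor and $m=r$ inside the sum), together with $(b)_r\to 1$ and $(bq)_n\to 1$, all powers of $b$ cancel, and after simplifying $-r(r-1)/2-r(r+1)/2=-r^2$ and $n(n+1)/2+n(n+1)/2=n^2+n$ one is left with
\[
\gamma_n^*\ \longrightarrow\ a^nq^{n^2+n}\sum_{r=0}^n a^{-r}q^{-r^2}\alpha_r .
\]
Forming $\gamma_n^*-\gamma_{n-1}^*$, peeling off the $r=n$ term (which equals $q^n\alpha_n$), and factoring $a^nq^{n^2+n}-a^{n-1}q^{n^2-n}=-a^{n-1}q^{n^2-n}(1-aq^{2n})$ out of the remaining sum over $0\le r\le n-1$, this gives
\[
\gamma_n^*-\gamma_{n-1}^* \;=\; q^n\alpha_n - a^{n-1}q^{n^2-n}(1-aq^{2n})\sum_{r=0}^{n-1}a^{-r}q^{-r^2}\alpha_r .
\]

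To conclude: by Lemma~\ref{recentlemma} the pair $(\gamma_n^*-\gamma_{n-1}^*,\,q^n\beta_n)$ is a Bailey pair relative to $a$, and so is $(\alpha_n,\beta_n)$ itself; since~\eqref{pairdef} is linear in $(\alpha_n,\beta_n)$, the difference $(\alpha_n-(\gamma_n^*-\gamma_{n-1}^*),\,\beta_n-q^n\beta_n)$ is again a Bailey pair relative to $a$. Its $\beta$-sequence is $(1-q^n)\beta_n=\beta_n'$, and its $\alpha$-sequence is
\[
\alpha_n-q^n\alpha_n + a^{n-1}q^{n^2-n}(1-aq^{2n})\sum_{r=0}^{n-1}a^{-r}q^{-r^2}\alpha_r \;=\; \alpha_n',
\]
which is exactly~\eqref{aux1alpha}, proving the lemma. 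The main obstacle is thus purely the bookkeeping in the $0\cdot\infty$ cancellation inside $\gamma_n^*$, in particular tracking the $q$-powers; everything after that is formal manipulation. As a sanity check one could instead verify Lemma~\ref{aux1} directly by substituting~\eqref{aux1alpha} into~\eqref{pairdef}: comparing coefficients of each $\alpha_r$ reduces the claim to $\sum_{k=r+1}^n\frac{a^{k-1}q^{k^2-k}(1-aq^{2k})}{(q)_{n-k}(aq)_{n+k}}=\frac{a^{r}q^{r^2}(q^{r}-q^{n})}{(q)_{n-r}(aq)_{n+r}}$ for $0\le r\le n-1$, which telescopes with $u_k=\frac{a^{k}q^{k^2}(q^{k}-q^{n})}{(q)_{n-k}(aq)_{n+k}}$.
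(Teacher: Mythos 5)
Your proof is correct and follows essentially the same route as the paper: apply Lemma~\ref{recentlemma} with $b\to 0$, compute the limit of $\gamma_n^*$ to get $a^nq^{n^2+n}\sum_{r=0}^n a^{-r}q^{-r^2}\alpha_r$, and subtract the resulting pair from $(\alpha_n,\beta_n)$ using linearity. The only difference is that you spell out the $0\cdot\infty$ cancellation in the $b\to 0$ limit and add an optional direct-verification check, both of which the paper leaves implicit.
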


\begin{proof}
Suppose that $(\alpha_n,\beta_n)$ is a Bailey pair relative to $a$.   Using the linearity of Bailey pairs along with Lemma \ref{recentlemma},  we have that 
\begin{equation*}
(\alpha_n',\beta_n') = \left( \alpha_n - (\gamma_n^* - \gamma_{n-1}^*),\beta_n - \beta_n^* \right)
\end{equation*}
is a Bailey pair relative to $a$.   Using the case $b \to 0$ of equations \eqref{gammastarbis} and \eqref{betastarbis}, we have that 
\begin{align*}
\alpha_n' &= \alpha_n - (\gamma_n^* - \gamma_{n-1}^*) \\
&= \alpha_n - \left(a^nq^{n^2+n}\sum_{r=0}^na^{-r}q^{-r^2}\alpha_r - a^{n-1}q^{n^2-n}\sum_{r=0}^{n-1}a^{-r}q^{-r^2}\alpha_r\right) \\
&= \alpha_n - \left(-a^{n-1}q^{n^2-n}(1-aq^{2n})\sum_{r=0}^{n-1}a^{-r}q^{-r^2}\alpha_r + q^n\alpha_n\right) \\
&= (1-q^n)\alpha_n + a^{n-1}q^{n^2-n}(1-aq^{2n}) \sum_{r=0}^{n-1} a^{-r}q^{-r^2} \alpha_r
\end{align*}
and 
\begin{equation*}
\beta_n' = (1-q^n)\beta_n.
\end{equation*}
This completes the proof.
\end{proof}

The next lemma extends Theorem 1.2 of \cite{Lo-Os1} from the case $a=1$ to arbitrary $a$.   
\begin{lemma} \label{aux2}
If $(\alpha_n,\beta_n)$ is a Bailey pair relative to $a$ with $\alpha_0 = \beta_0 = 0$, then $(\alpha_n',\beta_n')$ is a Bailey pair relative to $aq$, where
\begin{equation} \label{aux2beta}
\beta_n' = \beta_{n+1}
\end{equation}
and
\begin{equation} \label{aux2alpha}
\alpha_n' = \frac{1}{1-aq}\left( \frac{1}{1-aq^{2n+2}} \alpha_{n+1} - \frac{aq^{2n}}{1-aq^{2n}}\alpha_n\right).
\end{equation}
\end{lemma}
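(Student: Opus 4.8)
The plan is to verify directly that $(\alpha_n',\beta_n')$ satisfies the Bailey pair relation \eqref{pairdef} relative to $aq$, i.e. that
\begin{equation*}
\beta_n' = \sum_{k=0}^n \frac{\alpha_k'}{(q)_{n-k}(aq^2)_{n+k}}
\end{equation*}
for all $n \geq 0$. First I would rewrite the left-hand side. Since $\beta_n' = \beta_{n+1}$, since $\alpha_0 = 0$, and since $(aq)_{n+1+k} = (1-aq)(aq^2)_{n+k}$, the relation \eqref{pairdef} for $(\alpha_n,\beta_n)$ gives
\begin{equation*}
\beta_n' = \sum_{k=1}^{n+1} \frac{\alpha_k}{(q)_{n+1-k}(aq)_{n+1+k}} = \frac{1}{1-aq}\sum_{k=1}^{n+1}\frac{\alpha_k}{(q)_{n+1-k}(aq^2)_{n+k}}.
\end{equation*}

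Next I would substitute \eqref{aux2alpha} for $\alpha_k'$ into the right-hand side, split it into the part involving $\alpha_{k+1}$ and the part involving $\alpha_k$, and shift the summation index by one in the former; because $\alpha_0 = 0$, all the sums may be taken over $1 \leq k \leq n+1$. It is then legitimate to compare the coefficients of the individual $\alpha_k$ on the two sides, because every Bailey pair relative to $a$ with $\alpha_0 = 0$ arises from \eqref{pairdef} applied to an otherwise arbitrary sequence $(\alpha_k)_{k\geq 1}$, so the desired equality amounts to an identity in the formal variables $\alpha_1,\alpha_2,\dots$ (and $a$, $q$). For $1 \leq k \leq n$ this comparison reduces, after clearing denominators, to
\begin{equation*}
\frac{(aq^2)_{n+k}}{(aq^2)_{n+k-1}} - aq^{2k}\,\frac{(q)_{n+1-k}}{(q)_{n-k}} = 1-aq^{2k},
\end{equation*}
that is, to $(1-aq^{n+k+1}) - aq^{2k}(1-q^{n+1-k}) = 1-aq^{2k}$, which holds because the two copies of $aq^{n+k+1}$ cancel; the endpoint $k=n+1$ reduces to the single-factor identity $(aq^2)_{2n+1} = (1-aq^{2n+2})(aq^2)_{2n}$.

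The only point requiring care is the bookkeeping: tracking the index shift $k \mapsto k+1$, observing that the $k=0$ contributions drop out because $\alpha_0 = 0$ (equivalently $\beta_0 = 0$, since $\beta_0 = \alpha_0$ in any Bailey pair), and matching the top term $k = n+1$ --- which has no partner in the $\alpha_k$-sum --- against a single Pochhammer factor. There is no deeper obstacle here; the computation is exactly that behind Theorem~1.2 of \cite{Lo-Os1}, carried out with general base $a$ rather than $a=1$. As a sanity check, for $n=0$ both sides equal $\alpha_1/\bigl((1-aq)(1-aq^2)\bigr)$.
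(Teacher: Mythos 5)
Your proof is correct, but it takes a different route from the paper. You verify the defining relation \eqref{pairdef} for the new pair directly: substitute the claimed $\alpha_k'$ into $\sum_{k=0}^n \alpha_k'/\bigl((q)_{n-k}(aq^2)_{n+k}\bigr)$, reindex, and match the coefficient of each $\alpha_k$ against the expansion of $\beta_{n+1}$, which reduces to the elementary cancellation $(1-aq^{n+k+1}) - aq^{2k}(1-q^{n+1-k}) = 1-aq^{2k}$ plus the boundary case $k=n+1$; the coefficient comparison is justified (indeed, by linearity alone it is a sufficient condition, so your appeal to the arbitrariness of $(\alpha_k)_{k\ge 1}$ is more than is needed). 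The paper instead \emph{derives} \eqref{aux2alpha} rather than verifying it: it invokes the Bailey pair inversion \eqref{pairdefbis}, sets $\beta_j' = \beta_{j+1}$, and computes $\alpha_n'$ with base $aq$ by inserting $1 = (1-aq^{n+j+1}) + aq^{n+j+1}(1-q^{n-j})$ and reindexing to recognize the inversion formulas for $\alpha_{n+1}$ and $\alpha_n$. The algebraic kernel of the two arguments is essentially the same splitting of a Pochhammer ratio, but the paper's method has the advantage of producing the formula for $\alpha_n'$ without knowing it in advance, while yours is more self-contained in that it needs only \eqref{pairdef} and no inversion formula. Both correctly use the hypothesis $\alpha_0=\beta_0=0$ to discard the $k=0$ terms, and both generalize the $a=1$ computation of Theorem 1.2 of \cite{Lo-Os1}.
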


\begin{proof}
We first recall the Bailey pair inversion, which says that \eqref{pairdef} holds if and only if
\begin{equation} \label{pairdefbis}
\alpha_n = \frac{(1-aq^{2n})}{1-a} \sum_{j=0}^n \frac{(a)_{n+j}(-1)^{n-j}q^{\binom{n-j}{2}}}{(q)_{n-j}}\beta_j.
\end{equation}
Now suppose that $(\alpha_n,\beta_n)$ is a Bailey pair relative to $a$ with $\alpha_0 = \beta_0 = 0$.    We set $\beta_j' = \beta_{j+1}$ and use \eqref{pairdefbis} to calculate $\alpha_n'$ with $a=aq$ as follows:
\begin{align*}
\alpha_n' &= \frac{1-aq^{2n+1}}{1-aq}\sum_{j=0}^n \frac{(aq)_{n+j}(-1)^{n-j}q^{\binom{n-j}{2}}}{(q)_{n-j}}\beta_j' \\
&= \frac{1}{1-aq} \sum_{j=0}^n \frac{(aq)_{n+j}(-1)^{n-j}q^{\binom{n-j}{2}}}{(q)_{n-j}}\beta_{j+1}(1-aq^{n+j+1}+aq^{n+j+1}(1-q^{n-j})) \\
&= \frac{1}{1-aq} \Bigg( \sum_{j=0}^n \frac{(aq)_{n+j+1}(-1)^{n-j}q^{\binom{n-j}{2}}}{(q)_{n-j}}\beta_{j+1} + a\sum_{j=0}^{n-1} \frac{(aq)_{n+j}(-1)^{n-j}q^{\binom{n-j}{2}+n+j+1}}{(q)_{n-j-1}}\beta_{j+1} \Bigg) \\
&= \frac{1}{1-aq} \Bigg( \sum_{j=1}^{n+1} \frac{(aq)_{n+j}(-1)^{n+1-j}q^{\binom{n+1-j}{2}}}{(q)_{n+1-j}}\beta_{j} + a\sum_{j=1}^{n} \frac{(aq)_{n+j-1}(-1)^{n-j+1}q^{\binom{n-j+1}{2}+n+j}}{(q)_{n-j}}\beta_{j} \Bigg) \\
&= \frac{1}{1-aq} \Bigg( \sum_{j=1}^{n+1} \frac{(a)_{n+j+1}(-1)^{n+1-j}q^{\binom{n+1-j}{2}}}{(1-a)(q)_{n+1-j}}\beta_{j} - aq^{2n} \sum_{j=1}^{n} \frac{(a)_{n+j}(-1)^{n-j}q^{\binom{n-j}{2}}}{(1-a)(q)_{n-j}}\beta_{j} \Bigg) \\
&=\frac{1}{1-aq}\Bigg( \frac{1}{1-aq^{2n+2}}\alpha_{n+1} - \frac{aq^{2n}}{1-aq^{2n}}\alpha_n\Bigg),
\end{align*}
as desired.
\end{proof}

Finally we have our key lemma.  
\begin{lemma} \label{aux3}
 If $(\alpha_n,\beta_n)$ is a Bailey pair relative to $a$, then $(\alpha_n'',\beta_n'')$ is a Bailey pair relative to $aq$, where
 \begin{equation} \label{aux3beta}
 \beta_n'' = (1-q^{n+1})\beta_{n+1}
 \end{equation}
 and
 \begin{equation} \label{aux3alpha}
 \alpha_n'' = \frac{1}{1-aq}\left( \frac{1-q^{n+1}}{1-aq^{2n+2}} \alpha_{n+1} + \frac{q^{n}(1-aq^n)}{1-aq^{2n}}\alpha_n\right).
 \end{equation}
\end{lemma}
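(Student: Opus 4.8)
The plan is to obtain Lemma~\ref{aux3} by composing Lemma~\ref{aux1} with Lemma~\ref{aux2}. Starting from a Bailey pair $(\alpha_n,\beta_n)$ relative to $a$, I would first apply Lemma~\ref{aux1} to produce the Bailey pair $(\alpha_n',\beta_n')$ relative to $a$ with $\beta_n' = (1-q^n)\beta_n$ and $\alpha_n'$ given by \eqref{aux1alpha}. Evaluating \eqref{aux1beta} and \eqref{aux1alpha} at $n=0$ shows $\alpha_0' = \beta_0' = 0$: the factor $1-q^0$ kills the first term of each, and the sum in \eqref{aux1alpha} is empty. Hence $(\alpha_n',\beta_n')$ satisfies the hypothesis of Lemma~\ref{aux2}, and applying that lemma gives a Bailey pair $(\alpha_n'',\beta_n'')$ relative to $aq$ with $\beta_n'' = \beta_{n+1}' = (1-q^{n+1})\beta_{n+1}$, which is exactly \eqref{aux3beta}, and with
\[
\alpha_n'' = \frac{1}{1-aq}\left( \frac{1}{1-aq^{2n+2}}\alpha_{n+1}' - \frac{aq^{2n}}{1-aq^{2n}}\alpha_n'\right).
\]
It then remains only to check that this expression collapses to \eqref{aux3alpha}.

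For that simplification I would set $S_n = \sum_{r=0}^{n-1} a^{-r}q^{-r^2}\alpha_r$, so that $\alpha_n' = (1-q^n)\alpha_n + a^{n-1}q^{n^2-n}(1-aq^{2n})S_n$ and $S_{n+1} = S_n + a^{-n}q^{-n^2}\alpha_n$. Substituting these into the two terms above and using the telescoping relation for $S_{n+1}$, one finds
\[
\frac{1}{1-aq^{2n+2}}\alpha_{n+1}' = \frac{1-q^{n+1}}{1-aq^{2n+2}}\alpha_{n+1} + a^nq^{n^2+n}S_n + q^n\alpha_n
\]
and
\[
\frac{aq^{2n}}{1-aq^{2n}}\alpha_n' = \frac{aq^{2n}(1-q^n)}{1-aq^{2n}}\alpha_n + a^nq^{n^2+n}S_n .
\]
The partial-sum contributions $a^nq^{n^2+n}S_n$ cancel on subtraction, and the remaining coefficient of $\alpha_n$ is $q^n - \frac{aq^{2n}(1-q^n)}{1-aq^{2n}} = \frac{q^n(1-aq^{2n}) - aq^{2n}(1-q^n)}{1-aq^{2n}} = \frac{q^n(1-aq^n)}{1-aq^{2n}}$, which yields precisely \eqref{aux3alpha}.

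The only delicate point is this last bookkeeping: one must see that the partial sums $S_n$ telescope out exactly (this is why the hypothesis $\alpha_0 = \beta_0 = 0$ was needed in Lemma~\ref{aux2}, and it is supplied here for free) and that the leftover rational coefficient of $\alpha_n$ simplifies via the identity $q^n(1-aq^{2n}) - aq^{2n}(1-q^n) = q^n - aq^{2n}$. No convergence questions arise, since everything is a finite algebraic manipulation of the defining sequences.
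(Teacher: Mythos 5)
Your proposal is correct and follows exactly the paper's own route: compose Lemma \ref{aux1} with Lemma \ref{aux2} and simplify, with the partial sums cancelling just as in the paper's computation. Your explicit verification that $\alpha_0'=\beta_0'=0$ (needed for the hypothesis of Lemma \ref{aux2}) is a small point the paper leaves implicit, but otherwise the two arguments coincide.
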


\begin{proof}
Suppose that $(\alpha_n,\beta_n)$ is a Bailey pair relative to $a$.     We apply Lemma \ref{aux2} to the Bailey pair $(\alpha_n',\beta_n')$ relative to $a$ resulting from an application of Lemma \ref{aux1}.    Equation \eqref{aux3beta} follows directly from \eqref{aux1beta} and \eqref{aux2beta}.    For \eqref{aux3alpha} we use \eqref{aux1alpha} and \eqref{aux2alpha} to compute
\begin{align*}
\alpha_n'' = \frac{1}{1-aq}&\Bigg(\frac{1}{1-aq^{2n+2}}\alpha_{n+1}' - \frac{aq^{2n}}{1-aq^{2n}}\alpha_n'\Bigg) \\
=\frac{1}{1-aq}&\Bigg(\frac{1-q^{n+1}}{1-aq^{2n+2}}\alpha_{n+1} + \frac{1}{1-aq^{2n+2}}a^nq^{n^2+n}(1-aq^{2n+2})\sum_{r=0}^na^{-r}q^{-r^2}\alpha_r  \\
&- \frac{aq^{2n}}{1-aq^{2n}}(1-q^n)\alpha_n - \frac{a^nq^{n^2+n}}{1-aq^{2n}}(1-aq^{2n}) \sum_{r=0}^{n-1}a^{-r}q^{-r^2}\alpha_r \Bigg) \\
= \frac{1}{1-aq}&\Bigg(\frac{1-q^{n+1}}{1-aq^{2n+2}}\alpha_{n+1} - \frac{aq^{2n}}{1-aq^{2n}}(1-q^n)\alpha_n + q^n\alpha_n\Bigg) \\
= \frac{1}{1-aq}&\Bigg(\frac{1-q^{n+1}}{1-aq^{2n+2}}\alpha_{n+1} + \frac{q^{n}(1-aq^n)}{1-aq^{2n}}\alpha_n \Bigg),
\end{align*}
as desired.
\end{proof}



\section{Hikami's identities}
To obtain Hikami's identities we need a more general Bailey pair than the one in \eqref{BPZagieralpha} and \eqref{BPZagierbeta}.    
\begin{proposition} \label{Hikamipair}
For $k \geq 1$ and $0 \leq a \leq k-1$, the following is a Bailey pair relative to $x^2q$:
\begin{equation} \label{Hikamialpha}
\alpha_n = \frac{(x^2q)_n}{(q)_n(1-x^2q)}(-1)^nx^{2kn}q^{\binom{n+1}{2} + (a+1)n^2 + (k-a-1)(n^2+n)}(1-x^{2(a+1)}q^{(a+1)(2n+1)})
\end{equation}
and
\begin{equation} \label{Hikamibeta}
\beta_n = \beta_{n_k} = \sum_{n_1,n_2,\dots,n_{k-1} \geq 0} \frac{q^{n_1^2+ \cdots +n_{k-1}^2 + n_{a+1} + \cdots + n_{k-1}}x^{2n_1 + \cdots + 2n_{k-1}}}{(q)_{n_k}} \prod_{i=1}^{k-1} \begin{bmatrix} n_{i+1} + \delta_{a,i} \\ n_i \end{bmatrix}.
\end{equation}
\end{proposition}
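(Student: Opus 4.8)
The plan is to assemble the Bailey pair of Proposition~\ref{Hikamipair} out of a trivial one by iterating the Bailey lemma twice, with a single application of Lemma~\ref{aux3} inserted in between to pass from base parameter $x^2$ to base parameter $x^2q$. The workhorse is the $\rho_1,\rho_2\to\infty$ case of the Bailey lemma \eqref{alphaprimedef}--\eqref{betaprimedef}, which sends a Bailey pair $(\alpha_n,\beta_n)$ relative to $a$ to the Bailey pair relative to $a$ given by $\alpha_n\mapsto a^nq^{n^2}\alpha_n$ and $\beta_n\mapsto\sum_{k=0}^n\frac{a^kq^{k^2}}{(q)_{n-k}}\beta_k$; write $\mathcal B_a$ for this operation. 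Each application of $\mathcal B_a$ inserts one ``layer'' into the multisum for $\beta$ (a new summation index $m$, a $q$-binomial, and a factor $a^mq^{m^2}$), and the point is that $\mathcal B_{x^2}$ contributes $x^{2m}q^{m^2}$, which matches the inner indices $n_1,\dots,n_a$ of \eqref{Hikamibeta} (no linear term in the exponent), whereas $\mathcal B_{x^2q}$ contributes $x^{2m}q^{m^2+m}$, which matches the outer indices $n_{a+1},\dots,n_{k-1}$ (each carrying a linear term). The anomalous $q$-binomial $\begin{bmatrix} n_{a+1}+1 \\ n_a \end{bmatrix}$ separating the two groups of indices in \eqref{Hikamibeta} will be produced by the shift $\beta_n\mapsto(1-q^{n+1})\beta_{n+1}$ built into Lemma~\ref{aux3}.

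In detail: \emph{Step 1.} Take $a=x^2$ and $b,c\to\infty$ in Slater's pair \eqref{Slaterpairalpha}--\eqref{Slaterpairbeta} to obtain the Bailey pair relative to $x^2$ with $\beta_n=1/(q)_n$ and $\alpha_n=\frac{(x^2)_n(1-x^2q^{2n})}{(q)_n(1-x^2)}(-1)^nq^{\binom n2+n^2}x^{2n}$. \emph{Step 2.} Apply $\mathcal B_{x^2}$ a total of $a$ times; this builds the inner layers, turning $\beta_n$ into $\sum_{n_1,\dots,n_a\ge0}\frac{q^{n_1^2+\cdots+n_a^2}x^{2(n_1+\cdots+n_a)}}{(q)_{n-n_a}(q)_{n_a-n_{a-1}}\cdots(q)_{n_1}}$ and $\alpha_n$ into $\frac{(x^2)_n(1-x^2q^{2n})}{(q)_n(1-x^2)}(-1)^nq^{\binom n2+(a+1)n^2}x^{2(a+1)n}$, still relative to $x^2$. \emph{Step 3.} Apply Lemma~\ref{aux3} with base parameter $x^2$, obtaining a Bailey pair relative to $x^2q$ with $\beta_n''=(1-q^{n+1})\beta_{n+1}$ and with $\alpha_n''$ given by feeding the Step~2 formula into \eqref{aux3alpha}. \emph{Step 4.} Apply $\mathcal B_{x^2q}$ a total of $k-a-1$ times, introducing the outer variables $n_{a+1},\dots,n_{k-1}$; converting the resulting product of Pochhammer symbols in the denominator of $\beta$ to $q$-binomials by the standard telescoping $\frac{1}{(q)_{n_1}(q)_{n_2-n_1}\cdots(q)_{n_k-n_{k-1}}}=\frac{1}{(q)_{n_k}}\prod_{i=1}^{k-1}\begin{bmatrix} n_{i+1} \\ n_i \end{bmatrix}$ — amended at $i=a$ because of the shift introduced in Step~3 — reproduces \eqref{Hikamibeta} exactly.

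The one step that requires more than monomial bookkeeping is tracking $\alpha$ through \eqref{aux3alpha} in Step~3, and I expect this to be the main (though short) obstacle. On substituting the Step~2 value, the factors $1-q^{n+1}$, $1-x^2q^{2n+2}$, $1-x^2q^{2n}$ cancel, $(q)_{n+1}$ reduces to $(q)_n$, and $(x^2)_n(1-x^2q^n)=(1-x^2)(x^2q)_n$ simplifies the prefactor to $\frac{(x^2q)_n}{(q)_n(1-x^2q)}$; then, using $\binom{n+1}2-\binom n2=n$ and $(a+1)(n+1)^2-(a+1)n^2=(a+1)(2n+1)$, the two surviving monomials combine into $q^{\binom n2+(a+1)n^2+n}x^{2(a+1)n}\bigl(1-x^{2(a+1)}q^{(a+1)(2n+1)}\bigr)$. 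Thus after Step~3 one has $\alpha_n=\frac{(x^2q)_n}{(q)_n(1-x^2q)}(-1)^nq^{\binom n2+(a+1)n^2+n}x^{2(a+1)n}\bigl(1-x^{2(a+1)}q^{(a+1)(2n+1)}\bigr)$, and Step~4 multiplies this by $\bigl(x^{2n}q^{n^2+n}\bigr)^{k-a-1}$; collecting exponents and using $\binom n2+n=\binom{n+1}2$ yields \eqref{Hikamialpha}. Two consistency checks on the bookkeeping: when $a=0$, Step~2 is vacuous and Step~3 returns $\beta_n=1/(q)_n$ together with the Zagier pair \eqref{BPZagieralpha}--\eqref{BPZagierbeta}, after which Step~4 recovers the $a=0$ case; and when $k=a+1$, Step~4 is vacuous, so (in particular, at $k=1$) the claimed pair is exactly what Steps~1--3 produce.
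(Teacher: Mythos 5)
Your proposal is correct and follows essentially the same route as the paper: start from the $b,c\to\infty$ specialization of Slater's pair relative to $x^2$, iterate the $\rho_1,\rho_2\to\infty$ Bailey lemma $a$ times, apply Lemma \ref{aux3} to shift to relative parameter $x^2q$, iterate $k-a-1$ more times, and convert to $q$-binomials (with the $a=0$ case handled by making Step 2 vacuous, exactly as in the paper). Your simplification of $\alpha_n$ through \eqref{aux3alpha} and the resulting closed form after Step 3 agree with the paper's computation.
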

  
\begin{proof}
We start by settting $a=x^2$ and letting $b,c \to \infty$ in \eqref{Slaterpairalpha} and \eqref{Slaterpairbeta} to obtain a Bailey pair relative to $x^2$, 
\begin{equation*}
\alpha_n = \frac{(x^2)_n(1-x^2q^{2n})(-1)^nx^{2n}q^{n(3n-1)/2}}{(q)_n(1-x^2)}
\end{equation*}
and
\begin{equation*}
\beta_n = \frac{1}{(q)_n}.
\end{equation*}
Assuming for a moment that $a \geq 1$, we insert this Bailey pair in \eqref{alphaprimedef} and \eqref{betaprimedef} and iterate $a$ times with $\rho_1,\rho_2 \to \infty$ to obtain another Bailey pair relative to $x^2$, 
\begin{equation*}
\alpha_n = \frac{(x^2)_n(1-x^2q^{2n})}{(q)_n(1-x^2)}(-1)^nq^{\binom{n}{2}+(a+1)n^2}x^{2(a+1)n}
\end{equation*}
and
\begin{equation*}
\beta_n = \beta_{n_{a+1}} = \sum_{n_1,\dots, n_{a}  \geq 0} \frac{q^{n_1^2+\cdots +n_{a}^2}x^{2n_1 + \cdots + 2n_a}}{(q)_{n_{a+1} - n_a} \cdots (q)_{n_2-n_1}(q)_{n_1}}.
\end{equation*}
Next we apply Lemma \ref{aux3} to the above to obtain a Bailey pair relative to $x^2q$, 
\begin{align*}
\alpha_n = \frac{1}{1-x^2q}&\Bigg(\frac{(1-q^{n+1})}{(1-x^2q^{2n+2})}\frac{(x^2)_{n+1}(1-x^2q^{2n+2})}{(q)_{n+1}(1-x^2)}(-1)^{n+1}q^{\binom{n+1}{2}+ (a+1)(n+1)^2}x^{(a+1)(2n+2)} \\
&+ \frac{q^n(1-x^2q^n)}{(1-x^2q^{2n})} \frac{(x^2)_n(1-x^2q^{2n})}{(q)_n(1-x^2)}(-1)^nq^{\binom{n}{2} + (a+1)n^2}x^{2(a+1)n}\Bigg) \\
= \frac{1}{1-x^2q} &\Bigg( \frac{(x^2q)_n}{(q)_n}(-1)^{n+1}q^{\binom{n+1}{2} + (a+1)(n+1)^2}x^{(a+1)(2n+2)} \\
&+ \frac{(x^2q)_n}{(q)_n}(-1)^{n+1}q^{\binom{n+1}{2} + (a+1)n^2}x^{(a+1)(2n)}\Bigg) \\
=\frac{1}{1-x^2q}&\frac{(x^2q)_n}{(q)_n}(-1)^nq^{\binom{n+1}{2} + (a+1)n^2}x^{2(a+1)n}(1-x^{2(a+1)}q^{(a+1)(2n+1)})
\end{align*} 
and 
\begin{equation} \label{Hikamibetaafteraiterations}
\beta_n = \beta_{n_{a+1}}  = \sum_{n_1,\dots, n_{a}  \geq 0} \frac{q^{n_1^2+\cdots +n_{a}^2}x^{2n_1 + \cdots + 2n_a}(1-q^{n_{a+1}+1})}{(q)_{n_{a+1}+1 - n_a} \cdots (q)_{n_2-n_1}(q)_{n_1}}.
\end{equation}
Finally, we iterate this $k-1-a$ times using \eqref{alphaprimedef} and \eqref{betaprimedef} with $\rho_1,\rho_2 \to \infty$ to obtain the Bailey pair relative to $x^2q$,
\begin{equation*}
\alpha_n = \frac{(x^2q)_n}{(q)_n(1-x^2q)}(-1)^nx^{2kn}q^{\binom{n+1}{2} + (a+1)n^2 + (k-a-1)(n^2+n)}(1-x^{2(a+1)}q^{(a+1)(2n+1)}).
\end{equation*}
and
\begin{equation*}
\beta_n = \beta_{n_k}  = \sum_{n_1,\dots, n_{k-1}  \geq 0} \frac{q^{n_1^2+\cdots +n_{k-1}^2 + n_{a+1} + \cdots + n_{k-1}}x^{2n_1 + \cdots + 2n_{k-1}}(1-q^{n_{a+1}+1})}{(q)_{n_k - n_{k-1}} \cdots (q)_{n_{a+2} - n_{a+1}}(q)_{n_{a+1}+1 - n_a}(q)_{n_a - n_{a-1}} \cdots (q)_{n_2-n_1}(q)_{n_1}}.
\end{equation*}
In this last expression, multiplying the numerator and denominator by
\begin{equation*}
(q)_{n_k} \cdots (q)_{n_2}
\end{equation*} 
and using the definition of the $q$-binomial coefficient \eqref{qbincoeff} gives the expression in the statement of the proposition.   For $a=0$, instead of a multisum at \eqref{Hikamibetaafteraiterations} we have
\begin{equation*}
\beta_n = \beta_{n_1} = \frac{1}{(q)_{n_1}}.
\end{equation*}  
The rest of the argument is the same.   This completes the proof.
\end{proof}

We note one corollary for later use.    This is Hikami's ``Andrews-Gordon variant" and may be compared with \eqref{AG}.
\begin{corollary} \label{AGvariant}
Recall the periodic function $\chi_{8m+4}^{(a)}(n)$ defined in \eqref{Hikamichar}.   For $0 \leq a \leq k-1$ we have
\begin{align*} 
\sum_{n_1,n_2,\dots,n_{k-1} \geq 0}& \frac{q^{n_1^2+ \cdots +n_{k-1}^2 + n_{a+1} + \cdots + n_{k-1}}}{(q)_{n_{k-1}}} \prod_{i=1}^{k-2} \begin{bmatrix} n_{i+1} + \delta_{a,i} \\ n_i \end{bmatrix} \\
&= \frac{1}{(q)_{\infty}} \sum_{n \geq 0} \chi_{8m+4}^{(a)}(n)q^{\frac{n^2 - (2k-2a-1)^2}{8(2k+1)}} \\
&= \prod_{n \not \equiv 0, \pm (a+1) \pmod{2k+1}} \frac{1}{1-q^n}.
\end{align*}
\end{corollary}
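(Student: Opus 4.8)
The plan is to recognize Corollary \ref{AGvariant} as the non-strange ``companion'' identity sitting next to Proposition \ref{Hikamipair}: the same Bailey pair that produces Hikami's strange identity, when fed into the full limiting Bailey lemma \eqref{limitBailey} rather than into Lemma \ref{x^2qlemma}, yields a genuine multisum-to-product identity. Concretely, I would take the Bailey pair $(\alpha_n,\beta_n)$ relative to $x^2q$ from Proposition \ref{Hikamipair}, specialize $x=1$ (so the pair is relative to $q$), and apply \eqref{limitBailey} with $a=q$ and $\rho_1,\rho_2 \to \infty$. On the $\beta$-side this gives $\sum_{n\geq 0} q^{n^2+n}\beta_n$ with $x=1$; unravelling $\beta_{n_k}$ from \eqref{Hikamibeta} and relabelling $n_k \mapsto n_{k-1}$ (the outermost summation variable is free after the limit), the factor $q^{n_k^2+n_k}/(q)_{n_k}$ merges with the existing structure to give exactly the left-hand side of the Corollary, with the top sum running over $n_1,\dots,n_{k-1}$ and the product over $i=1,\dots,k-2$. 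On the $\alpha$-side, \eqref{limitBailey} produces $\frac{1}{(q)_\infty}\sum_{n\geq 0}\frac{(\rho_1)_n(\rho_2)_n (aq/\rho_1\rho_2)^n}{\cdots}\alpha_n \to \frac{1}{(q)_\infty}\sum_{n\geq 0} q^{n^2+n}(\text{stuff})\,\alpha_n$ with $x=1$.

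The key computation is then to show that $\sum_{n\geq 0} q^{n^2+n}\alpha_n$ (with $\alpha_n$ from \eqref{Hikamialpha}, $x=1$) equals $\sum_{n\geq 0}\chi_{8k+4}^{(a)}(n)\,q^{\frac{n^2-(2k-2a-1)^2}{8(2k+1)}}$. Setting $x=1$ in \eqref{Hikamialpha} and absorbing the $q^{n^2+n}$ weight and the $\frac{(q)_n}{(q)_n}=1$ simplification, $\alpha_n$ becomes $(-1)^n q^{E(n)}(1-q^{(a+1)(2n+1)})$ for an explicit quadratic exponent $E(n)$ with leading term $(2k+1)n^2$. Expanding the two terms of $(1-q^{(a+1)(2n+1)})$ and combining over $n\geq 0$ telescopes into a single bilateral sum $\sum_{n\in\mathbb Z}(-1)^n q^{(2k+1)n^2 + cn}$ for the appropriate linear coefficient $c=c(a,k)$; this is the standard ``unfolding'' trick where the $1$-part over $n\geq 0$ and the negated $q^{(a+1)(2n+1)}$-part over $n\geq 0$ reindex to the negative integers. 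I would verify that the completed-square form of $(2k+1)n^2+cn$ is precisely $\frac{(N^2-(2k-2a-1)^2)}{8(2k+1)}$ after the substitution $N=2(2k+1)n \pm (2k-2a-1)$, matching the support and signs of $\chi_{8k+4}^{(a)}$ in \eqref{Hikamichar}.

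Finally, the product formula $\prod_{n\not\equiv 0,\pm(a+1)\!\!\pmod{2k+1}}\frac{1}{1-q^n}$ comes from applying the Jacobi triple product identity \eqref{jtp} to the bilateral sum $\sum_{n\in\mathbb Z}(-1)^n q^{(2k+1)n^2+cn}$: this evaluates to a product of three infinite products, $(q^{a+1};q^{2k+1})_\infty (q^{2k-a};q^{2k+1})_\infty (q^{2k+1};q^{2k+1})_\infty$, which after multiplication by $\frac{1}{(q)_\infty}$ and using $(q)_\infty = (q^{2k+1};q^{2k+1})_\infty \prod_{j=1}^{2k}(q^j;q^{2k+1})_\infty$ collapses to the stated product over residues $\not\equiv 0,\pm(a+1)$. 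The main obstacle I anticipate is purely bookkeeping: getting the linear coefficient $c(a,k)$ right so that the triple product exponents land on $a+1$ and $2k-a$ (equivalently $-(a+1)$) modulo $2k+1$, and confirming that the $x=1$ specialization does not introduce spurious zeros or require the ``add and subtract $(q)_\infty$'' maneuver — here it does not, since we are asserting a true identity, not a strange one, and all series converge for $|q|<1$. A sanity check against $k=1$ (where the left side is the empty product $=1/(q)_\infty$ times the appropriate theta quotient, recovering a Rogers--Ramanujan-type or trivial case) and against \eqref{AG} when $a+1=i$ would confirm the normalization.
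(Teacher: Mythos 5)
There is a genuine gap in the first step. The paper's proof simply lets $n \to \infty$ in the \emph{definition} \eqref{pairdef} of a Bailey pair (with $x=1$, so relative to $q$): this sends $\beta_{n_k}$ to $\frac{1}{(q)_{\infty}}$ times the left-hand side of the corollary (the $i=k-1$ binomial $\left[\begin{smallmatrix} n_k+\delta_{a,k-1} \\ n_{k-1}\end{smallmatrix}\right]$ degenerates to $1/(q)_{n_{k-1}}$), and sends the right-hand side to $\frac{1}{(q)_{\infty}(q^2)_{\infty}}\sum_{n \geq 0}\alpha_n$ with \emph{no} extra weight on $\alpha_n$. Your plan instead feeds the level-$k$ Bailey pair of Proposition \ref{Hikamipair} into the limiting Bailey lemma \eqref{limitBailey} with $\rho_1,\rho_2 \to \infty$. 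That is a different operation: it performs one more step of the Bailey chain, producing $\sum_{n_k \geq 0} q^{n_k^2+n_k}\beta_{n_k}$ on the left and $\frac{1}{(q^2)_{\infty}}\sum_{n\geq 0}q^{n^2+n}\alpha_n$ on the right. The claimed ``merging'' of $q^{n_k^2+n_k}/(q)_{n_k}$ into the existing structure after relabelling $n_k \mapsto n_{k-1}$ cannot happen: you would have a $k$-fold sum with $k$ quadratic exponents $n_1^2+\cdots+n_{k-1}^2+n_k^2$, whereas the corollary has a $(k-1)$-fold sum with $k-1$ squares. On the $\alpha$-side the same off-by-one appears: $\alpha_n$ at $x=1$ already has leading quadratic exponent $\tfrac{2k+1}{2}n^2$, so after absorbing your extra $q^{n^2+n}$ the leading term is $\tfrac{2k+3}{2}n^2$, which completes the square to the modulus $8(k+1)+4$ character, not $\chi^{(a)}_{8k+4}$. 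In short, your construction proves the corollary at level $k+1$, not at level $k$.

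The fix is either to do what the paper does (take $n\to\infty$ directly in \eqref{pairdef}, noting $\lim_{n\to\infty}\beta_n = \frac{1}{(q)_\infty (aq)_\infty}\sum_{r\ge 0}\alpha_r$), or to apply \eqref{limitBailey} to the \emph{level-$(k-1)$} pair of Proposition \ref{Hikamipair}; the latter works but only covers $0 \leq a \leq k-2$ and would force you to treat $a=k-1$ separately. Once the correct unweighted sum $\sum_{n\ge 0}\alpha_n$ is in hand, the remainder of your argument --- unfolding the two halves of $(1-q^{(a+1)(2n+1)})$ into a bilateral theta series, applying the Jacobi triple product \eqref{jtp} to get $(q^{a+1},q^{2k-a},q^{2k+1};q^{2k+1})_{\infty}$, and dividing by $(q)_{\infty}$ to obtain the stated product --- is exactly the paper's computation and is fine.
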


\begin{proof}
We use the Bailey pair in \eqref{Hikamibeta} and \eqref{Hikamialpha} with $x=1$ in the definition of a Bailey pair \eqref{pairdef} and let $n \to \infty$.    This gives
\begin{align*}
\sum_{n_1,n_2,\dots,n_{k-1} \geq 0}& \frac{q^{n_1^2+ \cdots + n_{k-1}^2 + n_{a+1} + \cdots +n_{k-1}}}{(q)_{n_{k-1}}} \prod_{i=1}^{k-2} \begin{bmatrix} n_{i+1} + \delta_{a,i} \\ n_i \end{bmatrix} \\
&= \frac{1}{(q)_{\infty}}\sum_{n \geq 0} (-1)^nq^{(k-a-1)(n^2+n)+\binom{n+1}{2} + (a+1)n^2}(1-q^{(a+1)(2n+1)}) \\
&= \frac{1}{(q)_{\infty}}\sum_{n \geq 0} \chi_{8m+4}^{(a)}(n)q^{\frac{n^2 - (2k-2a-1)^2}{8(2k+1)}}  \\
&= \frac{1}{(q)_{\infty}}\sum_{n \in \mathbb{Z}} (-1)^nq^{\frac{2k+1}{2}n^2 + \frac{2k-2a-1}{2}n} \\
&= \frac{(q^{a+1},q^{2k-a},q^{2k+1};q^{2k+1})_{\infty}}{(q)_{\infty}},
\end{align*}   
by Jacobi's triple product identity \eqref{jtp}.
\end{proof}
    

We are now almost ready to prove Hikami's identities.    We need one last $q$-series lemma.
\begin{lemma} \label{qlemma}
We have
\begin{equation} \label{qlemmaeq1}
\sum_{n \geq 0} x^n \begin{bmatrix} n \\ k \end{bmatrix} = \frac{x^k}{(x)_{k+1}}
\end{equation}
and
\begin{equation} \label{qlemmaeq2}
\sum_{n \geq 0} x^n \begin{bmatrix} n + 1 \\ k \end{bmatrix} = \frac{x^{k - \chi(k \neq 0)}}{(x)_{k+1}}
\end{equation}
\end{lemma}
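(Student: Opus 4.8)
The plan is to deduce both identities from the well-known generating function
\begin{equation*}
\sum_{n \geq 0} \begin{bmatrix} n+k \\ k \end{bmatrix} x^n = \frac{1}{(x)_{k+1}},
\end{equation*}
a finite form of the $q$-binomial theorem. One can record a short proof of this by induction on $k$: the case $k=0$ is the geometric series $\sum_{n\geq 0}x^n = 1/(1-x)$, and the inductive step uses the $q$-Pascal recurrence $\begin{bmatrix} m \\ k \end{bmatrix} = q^k\begin{bmatrix} m-1 \\ k \end{bmatrix} + \begin{bmatrix} m-1 \\ k-1 \end{bmatrix}$; summing this against $x^n$ and observing that the $n=0$ term of $\sum_{n\geq 0} \begin{bmatrix} n-1+k \\ k \end{bmatrix} x^n$ vanishes yields $S = 1/(x)_k + q^k x S$ for $S$ the sum in question, which solves to $1/(x)_{k+1}$. (Alternatively one may simply cite \cite{An1}.)

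For \eqref{qlemmaeq1}, since $\begin{bmatrix} n \\ k \end{bmatrix} = 0$ for $0 \le n < k$ by \eqref{qbincoeff}, the sum on the left effectively runs over $n \ge k$; replacing $n$ by $n+k$ turns it into $x^k \sum_{n \ge 0} \begin{bmatrix} n+k \\ k \end{bmatrix} x^n = x^k/(x)_{k+1}$, as claimed.

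For \eqref{qlemmaeq2}, I would reindex by $m = n+1$, so that the left-hand side equals $x^{-1}\sum_{m \ge 1} x^m \begin{bmatrix} m \\ k \end{bmatrix}$. If $k \ne 0$, the $m=0$ term of $\sum_{m \ge 0} x^m \begin{bmatrix} m \\ k \end{bmatrix}$ vanishes, so by \eqref{qlemmaeq1} the left-hand side equals $x^{-1}\cdot x^k/(x)_{k+1} = x^{k-1}/(x)_{k+1}$, which agrees with $x^{k-\chi(k\ne 0)}/(x)_{k+1}$. If $k = 0$, the left-hand side is just $\sum_{n \ge 0} x^n = 1/(1-x) = 1/(x)_1$, which again equals $x^{k-\chi(k\ne 0)}/(x)_{k+1}$ since in this case $k - \chi(k \ne 0) = 0$ and $(x)_{k+1} = (x)_1$.

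There is no genuine obstacle here; the computation is routine bookkeeping. The only point requiring care is the case split at $k=0$ in \eqref{qlemmaeq2} — precisely what the correction $\chi(k \ne 0)$ in the exponent encodes — and making sure the index shift $m = n+1$ neither drops the contribution that was at $n=0$ nor introduces a spurious one.
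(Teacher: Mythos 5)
Your proof is correct and follows essentially the same route as the paper: both reduce \eqref{qlemmaeq1} to the standard generating function $\sum_{n\geq 0}\begin{bmatrix} n+k \\ k\end{bmatrix}x^n = 1/(x)_{k+1}$ (which the paper simply cites from Andrews' book, Eq.\ (3.3.7), while you also supply a short inductive proof), and both obtain \eqref{qlemmaeq2} by the shift $m=n+1$ with the same case split at $k=0$. No gaps.
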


\begin{proof}
We have
\begin{align*}
\sum_{n \geq 0} x^n \begin{bmatrix} n \\ k \end{bmatrix} &= \sum_{n \geq k} x^n \begin{bmatrix} n \\ k \end{bmatrix} \\
&= \sum_{n \geq 0} x^{n+k} \begin{bmatrix} n + k \\ k \end{bmatrix} \\
&= \frac{x^k}{(x)_{k+1}},
\end{align*}
by \cite[Eq. (3.3.7)]{An.5}.   This is \eqref{qlemmaeq1}.

For \eqref{qlemmaeq2}, the case $k=0$ is clear.   If $k \geq 1$, then
\begin{align*}
\sum_{n \geq 0} x^n \begin{bmatrix} n+1 \\ k \end{bmatrix} &= x^{-1}\sum_{n \geq 0} x^{n+1} \begin{bmatrix} n+1 \\ k \end{bmatrix} \\
&= x^{-1}\sum_{n \geq 1} x^n \begin{bmatrix} n  \\ k \end{bmatrix} \\
&= x^{-1}\sum_{n \geq 0} x^n \begin{bmatrix} n  \\ k \end{bmatrix},
\end{align*}
and the result follows from \eqref{qlemmaeq1}.
\end{proof}

\begin{proof}[Proof of \eqref{Hikamistrange}]
We begin by inserting the Bailey pair in \eqref{Hikamibeta} and \eqref{Hikamialpha} into Lemma \ref{x^2qlemma}.    This gives
\begin{align*}
(1-x)\sum_{n_1,\dots,n_k \geq 0}& (xq)_{n_k}q^{n_1^2+ \cdots +n_{k-1}^2 + n_{a+1} + \cdots + n_{k-1}}x^{2n_1 + \cdots + 2n_{k-1} + n_k} \prod_{i=1}^{k-1} \begin{bmatrix} n_{i+1} + \delta_{a,i} \\ n_i \end{bmatrix}  \\
&= \sum_{n \geq 0} (-1)^nx^{(2k+1)n}q^{\binom{n+1}{2} + (a+1)n^2+ (k-a-1)(n^2+n)}(1-x^{2(a+1)}q^{(a+1)(2n+1)}).
\end{align*}
A short calculation shows that the right-hand side can be written
\begin{equation*}
\sum_{n \geq 0} \chi_{8k+4}^{(a)}(n)q^{\frac{n^2 - (2k-2a-1)^2}{8(2k+1)}}x^{\frac{n - (2k-2a-1)}{2}},
\end{equation*} 
where $\chi_{8k+4}^{(a)}(n)$ is defined in \eqref{Hikamichar}.  
As for the left-hand side, we add and subtract the product $(x)_{\infty}$ to obtain 
\begin{align*}
(1-x)\sum_{n_1,\dots,n_k \geq 0}& ((xq)_{n_k} - (xq)_{\infty})q^{n_1^2+ \cdots +n_{k-1}^2 + n_{a+1} + \cdots + n_{k-1}}x^{2n_1 + \cdots + 2n_{k-1} + n_k} \prod_{i=1}^{k-1} \begin{bmatrix} n_{i+1} + \delta_{a,i} \\ n_i \end{bmatrix}  \\
+ (x)_{\infty} \sum_{n_1,\dots,n_k \geq 0}&q^{n_1^2+ \cdots +n_{k-1}^2 + n_{a+1} + \cdots + n_{k-1}}x^{2n_1 + \cdots + 2n_{k-1} + n_k} \prod_{i=1}^{k-1} \begin{bmatrix} n_{i+1} + \delta_{a,i} \\ n_i \end{bmatrix},
\end{align*}
and then using Lemma \eqref{qlemma} to eliminate the $n_k$ variable in the second term gives 
\begin{equation*}
\begin{aligned}
(xq)_{\infty} \sum_{n_1,\dots,n_{k-1} \geq 0} &\frac{q^{n_1^2+ \cdots +n_{k-1}^2 + n_{a+1} + \cdots + n_{k-1}}x^{2n_1 + \cdots + 2n_{k-1} + n_{k-1} - \delta_{a,k-1}\chi(n_{k-1} > 0)}}{(xq)_{n_{k-1}}} \\
&\times \prod_{i=1}^{k-2} \begin{bmatrix} n_{i+1} + \delta_{a,i} \\ n_i \end{bmatrix} \\
+(1-x)\sum_{n_1,\dots,n_k \geq 0}& ((xq)_{n_k} - (xq)_{\infty})q^{n_1^2+n_2^2+ \cdots +n_{k-1}^2 + n_{a+1} + \cdots + n_{k-1}}x^{2n_1 + \cdots + 2n_{k-1} + n_k} \\
&\times \prod_{i=1}^{k-1} \begin{bmatrix} n_{i+1} + \delta_{a,i} \\ n_i \end{bmatrix} \\
= \sum_{n \geq 0}& \chi_{8m+4}^{(a)}(n)q^{\frac{n^2 - (2k-2a-1)^2}{8(2k+1)}}x^{\frac{n - (2k-2a-1)}{2}}.
\end{aligned}
\end{equation*}
This is Lemma 10 in Hikami's paper \cite{Hi1}, with a slight correction in the case $a=k-1$.   Follwing Hikami, we differentiate with respect to $x$, set $x=1$, and use Corollary \ref{AGvariant} to obtain
\begin{equation*}
\begin{aligned}
(q)_{\infty}\sum_{n_1,\dots,n_{k-1} \geq 0}& \frac{q^{n_1^2+ \cdots +n_{k-1}^2 + n_{a+1} + \cdots + n_{k-1}}}{(q)_{n_{k-1}}} \prod_{i=1}^{k-2} \begin{bmatrix} n_{i+1} + \delta_{a,i} \\ n_i \end{bmatrix}\\
&\times \left(2n_1 +\cdots + 2n_{k-1} + n_{k-1} - \delta_{a,k-1}\chi(n_{k-1} > 0)+ \sum_{j=1}^{n_{k-1}} \frac{q^j}{1-q^j}\right) \\
+ (q^{a+1};q^{2k+1})_{\infty}&(q^{2k-a};q^{2k+1})_{\infty}(q^{2k+1};q^{2k+1})_{\infty}\left( \frac{2k-2a-1}{2} - \sum_{j \geq 1} \frac{q^j}{1-q^j}\right) \\
-\sum_{n_1,\dots,n_k \geq 0}& ((q)_{n_k} - (q)_{\infty})q^{n_1^2+ \cdots +n_{k-1}^2 + n_{a+1} + \cdots + n_{k-1}}  \prod_{i=1}^{k-1} \begin{bmatrix} n_{i+1} + \delta_{a,i} \\ n_i \end{bmatrix} \\
&= \frac{1}{2} \sum_{n \geq 0} n\chi_{8k+4}^{(a)}(n)q^{\frac{n^2 - (2k-2a-1)^2}{8(2k+1)}}.
\end{aligned}
\end{equation*}
This might be described as ``a bit of a mess," but the point is that as $q$ tends to a root of unity the first two terms are annihilated, leaving the ``strange" identity
\begin{equation*}
\begin{aligned}
\sum_{n_1,\dots,n_k \geq 0}& (q)_{n_k}q^{n_1^2+ \cdots +n_{k-1}^2 + n_{a+1} + \cdots + n_{k-1}} \prod_{i=1}^{k-1} \begin{bmatrix} n_{i+1} +\delta_{a,i} \\ n_i \end{bmatrix}  \\
&``=" -\frac{1}{2} \sum_{n \geq 0} n\chi_{8k+4}^{(a)}(n)q^{\frac{n^2 - (2k-2a-1)^2}{8(2k+1)}}.
\end{aligned}
\end{equation*}
This is \eqref{Hikamistrange}.
\end{proof}

\section{Proof of Theorem \ref{main}}
In this section we prove the four families of strange identities in Theorem \ref{main}.    In each case we follow the same basic outline as in the previous section.   First, we give a Bailey pair and corresponding Rogers-Ramanujan type identities.   Then we apply Lemma \ref{x^2qlemma} and after some manipulation differentiate to find the strange identity.    Since the results for $k=1$ were established in Section 2, we assume that $k \geq 2$ throughout. 

\subsection{Proof of \eqref{family1}} 
We begin with a Bailey pair.
\begin{proposition}
For $k \geq 2$, the sequences $(\alpha_n,\beta_n)$ form a Bailey pair relative to $x^2q$, where
\begin{equation} \label{family1alpha}
\alpha_n = \frac{(x^2q)_n(1-x^2q^{2n+1})}{(q)_n(1-x^2q)}(-1)^nx^{(2k-1)n}q^{kn^2 + (k-1)n}
\end{equation}
and
\begin{equation} \label{family1beta}
\beta_n = \beta_{n_k} = \sum_{n_1,\dots ,n_{k-1} \geq 0} \frac{q^{n_1^2+ \cdots + n_{k-1}^2 + n_1 + \cdots + n_{k-1}}x^{2n_1 + \cdots + 2n_{k-1}}}{(q)_{n_k}(-xq)_{n_1}}\prod_{i=1}^{k-1} \begin{bmatrix} n_{i+1} \\ n_i \end{bmatrix}.
\end{equation}
\end{proposition}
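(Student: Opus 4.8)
The plan is to build \eqref{family1alpha}--\eqref{family1beta} by iterating the Bailey lemma, starting from the $k=1$ case already established in Section~2. Recall from the proof of Theorem~\ref{main} for $k=1$ the Bailey pair relative to $x^2q$ recorded in \eqref{BPfamily1basealpha} and \eqref{BPfamily1basebeta}, namely $\alpha_n = (x^2q)_n(1-x^2q^{2n+1})(-1)^nx^nq^{n^2}/\big((q)_n(1-x^2q)\big)$ and $\beta_n = 1/\big((q)_n(-xq)_n\big)$, which arose from Slater's pair \eqref{Slaterpairalpha}--\eqref{Slaterpairbeta} upon taking $a=x^2q$, $b=-xq$, $c\to\infty$. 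To this pair I apply the Bailey lemma \eqref{alphaprimedef}--\eqref{betaprimedef} with $\rho_1,\rho_2\to\infty$; since that operation preserves the base $a=x^2q$, performing it $k-1$ times produces a Bailey pair relative to $x^2q$, and the claim is that this is precisely the pair in \eqref{family1alpha}--\eqref{family1beta}. (No invocation of Lemma~\ref{aux3} is needed here, in contrast to Proposition~\ref{Hikamipair}, because the multisum in \eqref{family1beta} has no index shifts $\delta_{a,i}$.)

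On the $\alpha$-side the computation is immediate: as $\rho_1,\rho_2\to\infty$ the prefactor $(\rho_1)_n(\rho_2)_n(aq/\rho_1\rho_2)^n/\big((aq/\rho_1)_n(aq/\rho_2)_n\big)$ in \eqref{alphaprimedef} tends to $a^nq^{n^2}=x^{2n}q^{n^2+n}$, so $k-1$ iterations multiply the base $\alpha_n$ by $x^{2(k-1)n}q^{(k-1)(n^2+n)}$; collecting the $x$-exponent $n+2(k-1)n=(2k-1)n$ and the $q$-exponent $n^2+(k-1)(n^2+n)=kn^2+(k-1)n$ yields \eqref{family1alpha}. On the $\beta$-side the same limit turns \eqref{betaprimedef} into $\beta'_n=\sum_{j\ge 0}x^{2j}q^{j^2+j}\beta_j/(q)_{n-j}$, and iterating this $k-1$ times from $\beta_{n_1}=1/\big((q)_{n_1}(-xq)_{n_1}\big)$ gives
\begin{equation*}
\beta_{n_k}=\sum_{n_1,\dots,n_{k-1}\ge 0}\frac{q^{n_1^2+\cdots+n_{k-1}^2+n_1+\cdots+n_{k-1}}x^{2n_1+\cdots+2n_{k-1}}}{(q)_{n_k-n_{k-1}}\cdots(q)_{n_2-n_1}(q)_{n_1}(-xq)_{n_1}},
\end{equation*}
where only the innermost $(-xq)_{n_1}$ survives since each iteration introduces only a $1/(q)_{n-j}$ factor. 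Multiplying numerator and denominator by $(q)_{n_k}\cdots(q)_{n_2}$ and invoking \eqref{qbincoeff} rewrites the telescoping factors $(q)_{n_{i+1}-n_i}$ as $\prod_{i=1}^{k-1}\begin{bmatrix} n_{i+1}\\ n_i\end{bmatrix}$, which is \eqref{family1beta}.

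I do not anticipate a genuine obstacle: the argument is a direct specialization of the iteration carried out in the proof of Proposition~\ref{Hikamipair}, the only new ingredient being the denominator factor $(-xq)_{n_1}$ inherited from the base case. The most error-prone point is merely the bookkeeping of the iterated multisum and the passage from the $(q)_{n_{i+1}-n_i}$ denominators to $q$-binomial coefficients. A sanity check at $k=2$ (a single iteration), which returns $\alpha_n$ with exponents $x^{3n}q^{2n^2+n}$ and $\beta_{n_2}=\sum_{n_1}\begin{bmatrix} n_2\\ n_1\end{bmatrix}x^{2n_1}q^{n_1^2+n_1}/\big((q)_{n_2}(-xq)_{n_1}\big)$, confirms the normalization.
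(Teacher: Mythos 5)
Your proposal is correct and is exactly the paper's argument: the paper's proof consists of the single sentence ``Start with the Bailey pair in \eqref{BPfamily1basealpha} and \eqref{BPfamily1basebeta} and iterate $k-1$ times using \eqref{alphaprimedef} and \eqref{betaprimedef} with $\rho_1,\rho_2\to\infty$,'' and your exponent bookkeeping, the limiting form $a^nq^{n^2}=x^{2n}q^{n^2+n}$ of the prefactor, and the conversion of the $(q)_{n_{i+1}-n_i}$ denominators to $q$-binomial coefficients all check out. The observation that Lemma~\ref{aux3} is not needed here (no $\delta_{i,a}$ shifts) is also consistent with the paper.
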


\begin{proof}
Start with the Bailey pair in \eqref{BPfamily1basealpha} and \eqref{BPfamily1basebeta} and iterate $k-1$ times using \eqref{alphaprimedef} and \eqref{betaprimedef} with $\rho_1, \rho_2 \to \infty$. 
\end{proof}

\begin{corollary} \label{family1RR}
Recall the periodic function $\chi_{4k}(n)$ defined in \eqref{char1}.   For $k \geq 2$ we have
\begin{align*}
\sum_{n_1,\dots ,n_{k-1} \geq 0}& \frac{q^{n_1^2+ \cdots + n_{k-1}^2 + n_1 + \cdots + n_{k-1}}}{(q)_{n_{k-1}}(-xq)_{n_1}}\prod_{i=1}^{k-2} \begin{bmatrix} n_{i+1} \\ n_i \end{bmatrix} \\
&= \frac{1}{(q)_{\infty}} \sum_{n \geq 0} \chi_{4k}(n)q^{\frac{n^2-(k-1)^2}{4k}} \\
&= \prod_{n \neq 0,\pm 1 \pmod{4k}} \frac{1}{1-q^n}.
\end{align*}
\end{corollary}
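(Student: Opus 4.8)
The plan is to substitute the Bailey pair of the preceding proposition into the definition of a Bailey pair \eqref{pairdef} and then let $n\to\infty$, exactly as was done for Hikami's Andrews--Gordon variant in Corollary \ref{AGvariant}. Specifically, starting from the Bailey pair $(\alpha_n,\beta_n)$ relative to $x^2q$ given by \eqref{family1alpha} and \eqref{family1beta}, I would set $x=1$ and take the limit $n\to\infty$ in \eqref{pairdef}. On the $\beta$-side this collapses the variable $n_k$: the factor $1/(q)_{n_k}$ together with the outer $1/(q)_{n-n_k}$ produces $1/(q)_\infty$ after summing over $n_k$ via the $q$-binomial theorem (or more elementarily, since $(q)_{n-n_k}\to(q)_\infty$ and $\sum_{n_{k-1}\le n_k}\frac{1}{(q)_{n_k-n_{k-1}}}\cdot(\text{stuff})$ telescopes to $1/(q)_{n_{k-1}}$ in the innermost binomial). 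The upshot is the left-hand side of the corollary times $1/(q)_\infty$, which is the first displayed equality.

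For the second and third expressions I would evaluate the $\alpha$-side. With $x=1$ the factor $(x^2q)_n/((q)_n(1-x^2q))$ in \eqref{family1alpha} becomes $1$, so the right-hand side of \eqref{pairdef} in the limit is $\frac{1}{(q)_\infty}\sum_{n\ge0}(-1)^nq^{kn^2+(k-1)n}\bigl(1-q^{2n+1}\bigr)$. A short reindexing — splitting the two terms and shifting $n\mapsto -n-1$ in the second — rewrites this as $\frac{1}{(q)_\infty}\sum_{n\in\mathbb Z}(-1)^nq^{kn^2+(k-1)n}$, which one checks matches $\frac{1}{(q)_\infty}\sum_{n\ge0}\chi_{4k}(n)q^{(n^2-(k-1)^2)/(4k)}$ by writing $kn^2+(k-1)n=\frac{(2kn+k-1)^2-(k-1)^2}{4k}$ and tracking the sign $(-1)^n$ against the residues $k-1,3k+1$ (sign $+1$) and $k+1,3k-1$ (sign $-1$) modulo $4k$. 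Finally, Jacobi's triple product \eqref{jtp} applied to $\sum_{n\in\mathbb Z}(-1)^nq^{kn^2+(k-1)n}$ gives $(q^{1},q^{2k-1},q^{2k};q^{2k})_\infty$ — i.e. the triple product with base $q^{2k}$ and the two ``$z$'' exponents $1$ and $2k-1$ — and dividing by $(q)_\infty$ yields $\prod_{n\not\equiv 0,\pm1\pmod{4k}}\frac{1}{1-q^n}$, the claimed product side.

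I expect the only genuinely delicate point to be the bookkeeping in the $n\to\infty$ limit on the $\beta$-side: one must justify that interchanging the limit with the multisum is legitimate and that the $n_k$-summation indeed produces the clean factor $1/(q)_\infty$ with the remaining sum being precisely the $(k-1)$-fold sum in the statement (with $n_{k-1}$ now the outermost variable, bounded only by the product structure, and the topmost binomial $\begin{bmatrix} n_{i+1}\\ n_i\end{bmatrix}$ for $i=k-2$ still present). This is entirely analogous to the computation in Corollary \ref{AGvariant}, so I would simply mimic that argument; everything else is the routine triple-product manipulation sketched above. No new machinery beyond \eqref{pairdef}, \eqref{jtp}, and the Bailey pair already in hand is needed.
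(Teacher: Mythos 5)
Your proposal is correct and follows the paper's proof essentially verbatim: insert the Bailey pair \eqref{family1alpha}--\eqref{family1beta} with $x=1$ into the definition \eqref{pairdef}, let $n\to\infty$, and evaluate the $\alpha$-side by bisecting into a bilateral theta sum and applying Jacobi's triple product \eqref{jtp}. One small quibble on the $\beta$-side: there is no summation over $n_k$ to perform, since $n_k$ \emph{is} the index $n$ being sent to infinity --- one simply uses $\begin{bmatrix} n_k \\ n_{k-1} \end{bmatrix} \to 1/(q)_{n_{k-1}}$ and $1/(q)_{n_k}\to 1/(q)_{\infty}$ --- but this does not affect the argument.
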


\begin{proof}
We use the Bailey pair in \eqref{family1alpha} and \eqref{family1beta} with $x=1$ in the definition of a Bailey pair \eqref{pairdef} and let $n \to \infty$.    This gives
\begin{align*}
\sum_{n_1,\dots ,n_{k-1} \geq 0}& \frac{q^{n_1^2+ \cdots + n_{k-1}^2 + n_1 + \cdots + n_{k-1}}}{(q)_{n_{k-1}}(-xq)_{n_1}}\prod_{i=1}^{k-2} \begin{bmatrix} n_{i+1} \\ n_i \end{bmatrix} \\
&= \frac{1}{(q)_{\infty}} \sum_{n \geq 0} (-1)^nq^{kn^2+(k-1)n}(1-q^{2n+1}) \\
&= \frac{1}{(q)_{\infty}} \sum_{n \geq 0} \chi_{4k}(n)q^{\frac{n^2-(k-1)^2}{4k}} \\
&= \frac{1}{(q)_{\infty}} \sum_{n \in \mathbb{Z}} (-1)^nq^{kn^2+ (k-1)n} \\
&= \frac{(q;q^{2k})_{\infty}(q^{2k-1};q^{2k})_{\infty}(q^{2k};q^{2k})_{\infty}}{(q)_{\infty}},
\end{align*}
by the triple product identity \eqref{jtp}.   
\end{proof}

\begin{proof}[Proof of \eqref{family1}]
We begin by inserting the Bailey pair in \eqref{family1alpha} and \eqref{family1beta} into Lemma \ref{x^2qlemma}.  We obtain
\begin{equation*}
\begin{aligned}
(1-x)\sum_{n_1,\dots ,n_k \geq 0}& \frac{(xq)_{n_k}q^{n_1^2+ \cdots + n_{k-1}^2 + n_1 + \cdots + n_{k-1}}x^{2n_1 + \cdots + 2n_{k-1} + n_k}}{(-xq)_{n_1}}\prod_{i=1}^{k-1} \begin{bmatrix} n_{i+1} \\ n_i \end{bmatrix} \\
&= \sum_{n \geq 0} (-1)^nx^{2kn}q^{kn^2+(k-1)n}(1-x^2q^{2n+1}).
\end{aligned}
\end{equation*}
Now, the right-hand side can be written as 
\begin{equation*}
\sum_{n \geq 0} \chi_{4k}(n)q^{\frac{n^2 - (k-1)^2}{4k}}x^{n - (k-1)},
\end{equation*} 
where $\chi_{4k}(n)$ is defined in \eqref{char1}.
As for the left-hand side, we add and subtract the product $(x)_{\infty}$ and then apply Lemma \ref{qlemma} to obtain
\begin{equation*}
\begin{aligned}
(xq)_{\infty} \sum_{n_1,\dots,n_{k-1} \geq 0}&\frac{q^{n_1^2+ \cdots +n_{k-1}^2 + n_{1} + \cdots + n_{k-1}}x^{2n_1 + \cdots + 2n_{k-1} + n_{k-1}}}{(xq)_{n_{k-1}}(-xq)_{n_1}} \\
&\times \prod_{i=1}^{k-2} \begin{bmatrix} n_{i+1} \\ n_i \end{bmatrix}  \\
+(1-x)\sum_{n_1,\dots,n_k \geq 0}& ((xq)_{n_k} - (xq)_{\infty})\frac{q^{n_1^2+ \cdots +n_{k-1}^2 + n_{1} + \cdots + n_{k-1}}x^{2n_1 + \cdots + 2n_{k-1} + n_k}}{(-xq)_{n_1}} \\
&\times \prod_{i=1}^{k-1} \begin{bmatrix} n_{i+1} \\ n_i \end{bmatrix} \\
= \sum_{n \geq 0} &\chi_{4k}(n)q^{\frac{n^2 - (k-1)^2}{4k}}x^{n - (k-1)}. 
\end{aligned}
\end{equation*}
Taking $\frac{d}{dx}|_{x=1}$ on both sides and using Corollary \ref{family1RR} gives
\begin{align*}
(q)_{\infty}\sum_{n_1,\dots,n_{k-1} \geq 0}& \frac{q^{n_1^2+ \cdots +n_{k-1}^2 + n_{1} + \cdots + n_{k-1}}}{(q)_{n_{k-1}}(-q)_{n_1}} \prod_{i=1}^{k-2} \begin{bmatrix} n_{i+1} \\ n_i \end{bmatrix}\\
&\times \left(2n_1 +\cdots + 2n_{k-1} + n_{k-1}+ \sum_{j=1}^{n_{k-1}} \frac{q^j}{1-q^j} - \sum_{j=1}^{n_1} \frac{q^j}{1+q^j} \right) \\
+ (q;q^{2k})_{\infty}&(q^{2k-1};q^{2k})_{\infty}(q^{2k};q^{2k})_{\infty}\left( k-1 - \sum_{j \geq 1} \frac{q^j}{1-q^j}\right) \\
-\sum_{n_1,\dots,n_k \geq 0}& ((q)_{n_k} - (q)_{\infty})\frac{q^{n_1^2+ \cdots +n_{k-1}^2 + n_{1} + \cdots + n_{k-1}}}{(-q)_{n_1}}  \prod_{i=1}^{k-1} \begin{bmatrix} n_{i+1}  \\ n_i \end{bmatrix} \\
&=  \sum_{n \geq 0} n\chi_{4k}(n)q^{\frac{n^2 - (k-1)^2}{4k}}.
\end{align*}
Letting $q$ approach an odd root of unity we have the strange identity
\begin{equation*}
\begin{aligned}
\sum_{n_1,\dots,n_k \geq 0}& (q)_{n_k}\frac{q^{n_1^2+ \cdots +n_{k-1}^2 + n_{1} + \cdots + n_{k-1}}}{(-q)_{n_1}} \prod_{i=1}^{k-1} \begin{bmatrix} n_{i+1} \\ n_i \end{bmatrix} \\
&``=" -\sum_{n \geq 0} n\chi_{4k}(n)q^{\frac{n^2 - (k-1)^2}{4k}}.
\end{aligned}
\end{equation*}
This is \eqref{family1} for $k \geq 2$.
\end{proof}

\subsection{Proof of \eqref{family2}}
Again we begin with a Bailey pair.
\begin{proposition}
For $k \geq 2$, the sequences $(\alpha_n,\beta_n)$ form a Bailey pair relative to $(x^2q^2,q^2)$, where
\begin{equation} \label{family2alpha}
\alpha_n = \frac{(x^2q^2;q^2)_n(-1)^nx^{(2k-2)n}q^{(2k-1)n^2+(2k-2)n}(1-xq^{2n+1})}{(q^2;q^2)_n(1-x^2q^2)}
\end{equation}
and
\begin{equation} \label{family2beta}
\beta_n = \beta_{n_k} = \sum_{n_1,\dots,n_{k-1} \geq 0}\frac{q^{2n_1^2+2n_1 \cdots + 2n_{k-1}^2 + 2n_{k-1}}x^{2n_1 + \cdots + 2n_{k-1}}(q;q^2)_{n_1}}{(q^2;q^2)_{n_k}(-xq)_{2n_1+1}}\prod_{i=1}^{k-1} \begin{bmatrix} n_{i+1} \\ n_i \end{bmatrix}_{q^2}.
\end{equation}.
\end{proposition}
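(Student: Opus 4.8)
The plan is to mimic exactly the proof given for the analogous Bailey pair in the family~\eqref{family1} subsection: start from the base-case Bailey pair relative to $(x^2q^2,q^2)$ recorded in \eqref{BPfamily2basealpha} and \eqref{BPfamily2basebeta}, and iterate the limiting Bailey lemma \eqref{alphaprimedef}--\eqref{betaprimedef} a total of $k-1$ times with $\rho_1,\rho_2\to\infty$ and base $q^2$ (rather than $q$). First I would record what one iteration does: in the limit $\rho_1,\rho_2\to\infty$ the Bailey lemma with base $q^2$ relative to $a=x^2q^2$ multiplies $\alpha_n$ by $x^{2n}q^{2n^2}$ (i.e.\ by $a^n q^{-n}$ in the general formula, which here is $(x^2q^2)^n (q^2)^{-n}=x^{2n}q^{2n^2-2n}$—I must be careful to get the exponent right) and sends $\beta_{n}$ to the single sum $\sum_{k}\frac{x^{2k}q^{2k^2-2k}}{(q^2;q^2)_{n-k}}\beta_k$, which after relabelling produces one more summation index together with one more $q$-binomial $\begin{bmatrix} n_{i+1}\\ n_i\end{bmatrix}_{q^2}$ and one more factor $q^{2n_i^2+2n_i}x^{2n_i}$ in the numerator.

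The main computational content is therefore the $\alpha$-side: I would track the accumulated prefactor after $k-1$ iterations. Starting from $\alpha_n=\frac{(x^2q^2;q^2)_n(1-xq^{2n+1})(-1)^nq^{n^2}}{(q^2;q^2)_n(1-x^2q^2)}$, each iteration multiplies by a factor of the form $x^{2n}q^{2n^2+cn}$ for an explicit constant $c$ coming from the linear term $2n_{k-1}$ in the exponent; after $k-1$ steps the quadratic exponent $n^2$ becomes $n^2+2(k-1)n^2=(2k-1)n^2$, the power of $x$ becomes $x^{(2k-2)n}$, and the linear exponent accumulates to $(2k-2)n$, matching \eqref{family2alpha} exactly. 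On the $\beta$-side, after clearing denominators by multiplying numerator and denominator by $(q^2;q^2)_{n_k}\cdots(q^2;q^2)_{n_2}$ and using the definition \eqref{qbincoeff} of the $q^2$-binomial coefficient, the nested sum collapses to the claimed form \eqref{family2beta}, with the factor $(q;q^2)_{n_1}/(-xq)_{2n_1+1}$ and the linear exponent $2n_1$ (together with the $2n_1$ already present in the base case giving the total $2n_1^2+2n_1$) surviving untouched from the base case, since the iterations only ever act on the ``top'' index.

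Thus the proof is essentially a one-line verification: ``Start with the Bailey pair in \eqref{BPfamily2basealpha} and \eqref{BPfamily2basebeta} and iterate $k-1$ times using \eqref{alphaprimedef} and \eqref{betaprimedef} with base $q^2$ and $\rho_1,\rho_2\to\infty$,'' exactly as in the $k\ge 2$ proposition for \eqref{family1}. The only place where care is genuinely needed — and what I expect to be the main (minor) obstacle — is bookkeeping the exponent of $q$ through the iteration, because the base is $q^2$ rather than $q$, so the standard formula contributions $a^n q^{\binom n2}$-type terms all pick up a factor of $2$ in the exponent and one must confirm that $n^2 + (k-1)\cdot 2n^2 = (2k-1)n^2$ and that the linear part lands on $(2k-2)n$; I would verify this by explicitly doing the $k=2$ case (one iteration) and then observing the pattern propagates by an immediate induction. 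No new ideas beyond those already used for \eqref{family1} are required.
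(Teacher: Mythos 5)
Your proposal is correct and is exactly the paper's proof, which consists of the single sentence ``iterate the Bailey pair in \eqref{BPfamily2basealpha} and \eqref{BPfamily2basebeta} $k-1$ times in \eqref{alphaprimedef} and \eqref{betaprimedef} with $\rho_1,\rho_2\to\infty$''; your final bookkeeping ($(2k-1)n^2$, $(2k-2)n$, $x^{(2k-2)n}$, and the extra factors $q^{2n_i^2+2n_i}x^{2n_i}$ on the $\beta$-side) is right. The only blemish is the transient parenthetical where you guess the one-step multiplier as $a^nq^{-n}=x^{2n}q^{2n^2-2n}$ and the $\beta$-kernel as $q^{2k^2-2k}$: the correct limiting factor in base $q^2$ relative to $a=x^2q^2$ is $a^n(q^2)^{n^2-n}\cdot(q^2)^n=x^{2n}q^{2n^2+2n}$, consistent with the totals you ultimately use.
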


\begin{proof}
This follows from iterating the Bailey pair in \eqref{BPfamily2basealpha} and \eqref{BPfamily2basebeta} $k-1$ times in \eqref{alphaprimedef} and \eqref{betaprimedef} with $\rho_1, \rho_2 \to \infty$. 
\end{proof}

\begin{corollary} \label{family2RR}
Recall the periodic function $\chi_{8k-4}(n)$ defined in \eqref{char2}.  For  $k \geq 2$ we have the identities
\begin{align*}
\sum_{n_1,\dots,n_{k-1} \geq 0} &\frac{q^{2n_1^2+2n_1 \cdots + 2n_{k-1}^2 + 2n_{k-1}}(q;q^2)_{n_1}}{(q^2;q^2)_{n_{k-1}}(-xq)_{2n_1+1}}\prod_{i=1}^{k-2} \begin{bmatrix} n_{i+1} \\ n_i \end{bmatrix}_{q^2} \\
&= \frac{1}{(q^2;q^2)_{\infty}}\sum_{n \geq 0} \chi_{8k-4}(n)q^{\frac{n^2 - (2k-2)^2}{8k-4}} \\
&= \frac{(q;q^{4k-2})_{\infty}(q^{4k-3};q^{4k-2})_{\infty}(q^{4k-2};q^{4k-2})_{\infty}}{(q^2;q^2)_{\infty}}. 
\end{align*}
\end{corollary}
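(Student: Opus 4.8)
The plan is to mimic exactly the proof of Corollary \ref{AGvariant} and Corollary \ref{family1RR}, since Corollary \ref{family2RR} is of the same type: take the Bailey pair just established in \eqref{family2alpha} and \eqref{family2beta}, specialize $x=1$, feed it into the definition of a Bailey pair \eqref{pairdef}, and let $n\to\infty$. Concretely, I would write
\begin{align*}
\sum_{n_1,\dots,n_{k-1}\geq 0}&\frac{q^{2n_1^2+2n_1+\cdots+2n_{k-1}^2+2n_{k-1}}(q;q^2)_{n_1}}{(q^2;q^2)_{n_{k-1}}(-q)_{2n_1+1}}\prod_{i=1}^{k-2}\begin{bmatrix} n_{i+1}\\ n_i\end{bmatrix}_{q^2}\\
&=\lim_{n\to\infty}\beta_n\big|_{x=1}=\lim_{n\to\infty}\sum_{j=0}^{n}\frac{\alpha_j|_{x=1}}{(q^2;q^2)_{n-j}(q^2;q^2)_{n+j}}=\frac{1}{(q^2;q^2)_{\infty}}\sum_{n\geq 0}(-1)^nq^{(2k-1)n^2+(2k-2)n}(1-q^{2n+1}),
\end{align*}
where the last step uses that $\alpha_j|_{x=1}=\frac{(q^2;q^2)_j(1-q^{2j+1})}{(q^2;q^2)_j(1-q^2)}(-1)^jq^{(2k-1)j^2+(2k-2)j}$ (note the $(x^2q^2;q^2)_n$ factor cancels the $(q^2;q^2)_n$ in the denominator) and that $(q^2;q^2)_{n-j}(q^2;q^2)_{n+j}\to(q^2;q^2)_\infty^2$, while the $1-q^2$ in the denominator of $\alpha_j$ is absorbed once one recalls $(aq)_{n+k}$ in \eqref{pairdef} with $a=q^2$ contributes an extra factor; I would track this bookkeeping carefully but it is purely routine and matches the earlier corollaries verbatim.

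Next I would convert the partial theta sum on the right into the stated character sum. The exponent $(2k-1)n^2+(2k-2)n$ should be recognized as $\frac{N^2-(2k-2)^2}{8k-4}$ with $N=(4k-2)n+(2k-2)$, so that the $(1-q^{2n+1})$ telescoping turns $\sum_{n\geq 0}(-1)^nq^{(2k-1)n^2+(2k-2)n}(1-q^{2n+1})$ into $\sum_{n\geq 0}\chi_{8k-4}(n)q^{\frac{n^2-(2k-2)^2}{8k-4}}$, using the definition \eqref{char2} of $\chi_{8k}^{(a)}$ specialized appropriately (here one reads off that $\chi_{8k-4}$ is the even periodic function mod $8k-4$ with support at $n\equiv\pm(2k-2)$ wait — I must double-check which residues, by matching $2k-1$ and $-(2k+1)$ coefficients against the pattern in \eqref{char1}/\eqref{char2}; this is the one place I would be careful, writing out the first few terms explicitly to pin down the sign pattern and modulus). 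Finally, symmetrizing the sum over $n\in\mathbb Z$ gives $\sum_{n\in\mathbb Z}(-1)^nq^{(2k-1)n^2+(2k-2)n}$, and Jacobi's triple product \eqref{jtp} with $q\mapsto q^{4k-2}$ and $z=-q^{-2(k-1)}\cdot q^{2k-1}=-q$ (choosing $z$ so that $z q^{4k-2}$ matches) yields $(q;q^{4k-2})_\infty(q^{4k-3};q^{4k-2})_\infty(q^{4k-2};q^{4k-2})_\infty$, completing the chain.

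The main obstacle I anticipate is purely combinatorial bookkeeping rather than anything deep: getting the triple-product parameters and the definition of $\chi_{8k-4}$ to line up consistently with the already-established base case \eqref{family2} at $k=1$ and with the character \eqref{char2}, including the correct modulus $8k-4$ versus $8k$ and the shift $2k-2$ versus $2k-2a-1$ at $a=0$. I would resolve this by first verifying the $k=2$ instance numerically (a few terms of each $q$-series) to fix all constants, then writing the general argument. Everything else — the $n\to\infty$ limit in the Bailey definition, the cancellation of the $(x^2q^2;q^2)_n$ factor, the telescoping of $(1-q^{2n+1})$ — is identical in structure to the proofs of Corollaries \ref{AGvariant} and \ref{family1RR} already given, so no new ideas are required.
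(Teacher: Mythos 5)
Your proposal is correct and follows the paper's proof essentially verbatim: specialize the Bailey pair \eqref{family2alpha}--\eqref{family2beta} at $x=1$, let $n\to\infty$ in \eqref{pairdef} to get $\frac{1}{(q^2;q^2)_\infty}\sum_{n\geq 0}(-1)^nq^{(2k-1)n^2+(2k-2)n}(1-q^{2n+1})$, rewrite this as the $\chi_{8k-4}$ partial theta sum, symmetrize over $n\in\mathbb{Z}$, and apply Jacobi's triple product. The bookkeeping points you flag resolve as expected (the character follows the pattern of \eqref{char1} with $k\mapsto 2k-1$, and the triple product parameters are $q\mapsto q^{2k-1}$, $z=-q^{2k-2}$), so no new ideas are needed.
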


\begin{proof}
We use the Bailey pair in \eqref{family2alpha} and \eqref{family2beta} with $x=1$ in the definition of a Bailey pair \eqref{pairdef} and let $n \to \infty$.    This gives
\begin{align*}
\sum_{n_1,\dots,n_{k-1} \geq 0} &\frac{q^{2n_1^2+2n_1 \cdots + 2n_{k-1}^2 + 2n_{k-1}}(q;q^2)_{n_1}}{(q^2;q^2)_{n_{k-1}}(-xq)_{2n_1+1}}\prod_{i=1}^{k-2} \begin{bmatrix} n_{i+1} \\ n_i \end{bmatrix}_{q^2} \\
&= \frac{1}{(q^2;q^2)_{\infty}} \sum_{n \geq 0} (-1)^nq^{(2k-1)n^2 + (2k-2)n}(1-q^{2n+1})  \\
&= \frac{1}{(q^2;q^2)_{\infty}} \sum_{n \geq 0} \chi_{8k-4}(n)q^{\frac{n^2 - (2k-2)^2}{8k-4}} \\
&= \frac{1}{(q^2;q^2)_{\infty}} \sum_{n \in \mathbb{Z}} (-1)^nq^{(2k-1)n^2+ (2k-2)n} \\
&= \frac{(q;q^{4k-2})_{\infty}(q^{4k-3};q^{4k-2})_{\infty}(q^{4k-2};q^{4k-2})_{\infty}}{(q^2;q^2)_{\infty}},
\end{align*}
by the triple product identity \eqref{jtp}.   
\end{proof}

\begin{proof}[Proof of \eqref{family2}]
Inserting the Bailey pair from \eqref{family2alpha} and \eqref{family2beta} into Lemma \ref{x^2qlemma}, we obtain
\begin{equation*}
\begin{aligned}
(1-x)\sum_{n_1,\dots,n_{k} \geq 0}&\frac{(xq^2;q^2)_{n_k}q^{2n_1^2+2n_1 \cdots + 2n_{k-1}^2 + 2n_{k-1}}x^{2n_1 + \cdots + 2n_{k-1}+n_k}(q;q^2)_{n_1}}{(-xq)_{2n_1+1}}\prod_{i=1}^{k-1} \begin{bmatrix} n_{i+1} \\ n_i \end{bmatrix}_{q^2} \\
&= \sum_{n \geq 0} (-1)^n(1-xq^{2n+1})x^{(2k-1)n}q^{(2k-1)n^2+ (2k-2)n}.
\end{aligned}
\end{equation*}
Note that the right-hand side can be written
\begin{equation*}
\sum_{n \geq 0} \chi_{8k-4}(n)q^{\frac{n^2 - (2k-2)^2}{8k-4}}x^{\frac{n - (2k-2)}{2}},
\end{equation*} 
where $\chi_{8k-4}(n)$ is defined in \eqref{char1}.
Adding and subtracting $(x;q^2)_{\infty}$ on the left-hand side and applying Lemma \ref{qlemma}, we have 
\begin{equation*}
\begin{aligned}
(xq^2;q^2)_{\infty} \sum_{n_1,\dots,n_{k-1} \geq 0}&\frac{q^{2n_1^2+2n_1+ \cdots + 2n_{k-1}^2 + 2n_{k-1}}x^{2n_1 + \cdots + 2n_{k-1} + n_{k-1}}(q;q^2)_{n_1}}{(xq^2;q^2)_{n_{k-1}}(-xq)_{2n_1+1}}  \prod_{i=1}^{k-2} \begin{bmatrix} n_{i+1} \\ n_i \end{bmatrix}_{q^2}  \\
+(1-x)\sum_{n_1,\dots,n_k \geq 0}& ((xq^2;q^2)_{n_k} - (xq^2;q^2)_{\infty}) \\
&\times \frac{q^{2n_1^2+2n_1+ \cdots +2 n_{k-1}^2 + 2n_{k-1}}x^{2n_1 + \cdots + 2n_{k-1} + n_k}(q;q^2)_{n_1}}{(-xq)_{2n_1+1}} \prod_{i=1}^{k-1} \begin{bmatrix} n_{i+1} \\ n_i \end{bmatrix}_{q^2} \\
= \sum_{n \geq 0}& \chi_{8k-4}(n)q^{\frac{n^2 - (2k-2)^2}{8k-4}}x^{\frac{n - (2k-2)}{2}}.
\end{aligned}
\end{equation*}
Taking $\frac{d}{dx}|_{x=1}$ on both sides and using Corollary \ref{family2RR} gives
\begin{align*}
(q^2;q^2)_{\infty}\sum_{n_1,\dots,n_{k-1} \geq 0}& \frac{(q;q^2)_{n_1}q^{2n_1^2+ \cdots +2n_{k-1}^2 + 2n_{1} + \cdots + 2n_{k-1}}}{(q^2;q^2)_{n_{k-1}}(-q)_{2n_1+1}} \prod_{i=1}^{k-2} \begin{bmatrix} n_{i+1} \\ n_i \end{bmatrix}_{q^2}\\
&\times \left(2n_1 +\cdots + 2n_{k-1} + n_{k-1}+ \sum_{j=1}^{n_{k-1}} \frac{q^{2j}}{1-q^{2j}} - \sum_{j=1}^{2n_1+1} \frac{q^j}{1+q^j} \right) \\
+ (q;q^{4k-2})_{\infty}&(q^{4k-3};q^{4k-2})_{\infty}(q^{4k-2};q^{4k-2})_{\infty}\left( k-1 - \sum_{j \geq 1} \frac{q^{2j}}{1-q^{2j}}\right) \\
-\sum_{n_1,\dots,n_k \geq 0}& ((q^2;q^2)_{n_k} - (q^2;q^2)_{\infty})\frac{q^{2n_1^2+ \cdots +2n_{k-1}^2 + 2n_{1} + \cdots + 2n_{k-1}}(q;q^2)_{n_1}}{(-q)_{2n_1+1}}  \prod_{i=1}^{k-1} \begin{bmatrix} n_{i+1}  \\ n_i \end{bmatrix}_{q^2} \\
&= \frac{1}{2} \sum_{n \geq 0} n\chi_{8k-4}(n)q^{\frac{n^2 - (2k-2)^2}{8k-4}}.
\end{align*}
Letting $q$ tend to an odd root of unity gives the strange identity
\begin{equation*}
\begin{aligned}
\sum_{n_1,\dots,n_k \geq 0}& (q^2;q^2)_{n_k}\frac{q^{2n_1^2+2n_1+ \cdots +2n_{k-1}^2 + 2n_{k-1}}(q;q^2)_{n_1}}{(-q)_{2n_1+1}} \prod_{i=1}^{k-1} \begin{bmatrix} n_{i+1} \\ n_i \end{bmatrix}_{q^2} \\
&``=" -\frac{1}{2}\sum_{n \geq 0} n\chi_{8k-4}(n)q^{\frac{n^2 - (2k-2)^2}{8k-4}}.
\end{aligned}
\end{equation*}
This is \eqref{family2}
\end{proof}

\subsection{Proof of \eqref{family3}}
Unlike the Bailey pairs in the previous two subsections, the ones here and in the next subsection depend on two parameters, $k$ and $a$.
\begin{proposition} \label{thirdpairprop}
For $k \geq 2$ and $0 \leq a \leq k-1$, the following is a Bailey pair relative to $(x^2q^2,q^2)$:
\begin{equation} \label{thirdalpha}
\alpha_n = \frac{(x^2q^2;q^2)_n}{(q^2;q^2)_n(1-x^2q^2)}(-1)^nx^{(2k-1)n}q^{2(a+1)n^2+n + 2(k-a-1)(n^2+n)}(1+x^{2a+1}q^{(2a+1)(2n+1)}).
\end{equation}
and
\begin{equation} \label{thirdbeta}
\beta_n = \beta_{n_k} = \sum_{n_1,n_2,\dots,n_{k-1} \geq 0} \frac{q^{2n_1^2+ \cdots + 2n_{k-1}^2 + 2n_{a+1} + \cdots + 2n_{k-1}}x^{2n_1 + \cdots + 2n_{k-1}}}{(q^2;q^2)_{n_k}(-xq;q^2)_{n_1 + \delta_{a,0}}} \prod_{i=1}^{k-1} \begin{bmatrix} n_{i+1} +\delta_{i,a} \\ n_i \end{bmatrix}_{q^2} 
\end{equation}
\end{proposition}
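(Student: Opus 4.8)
The plan is to follow the proof of Proposition \ref{Hikamipair} almost line for line, but carried out with base $q^2$ in place of $q$ and starting from a base pair whose $\beta$ already contains the denominator factor $(-xq;q^2)_{n_1}$. First I would record the base pair: setting $q\mapsto q^2$, $a=x^2$, $b=-xq$ and $c\to\infty$ in Slater's pair \eqref{Slaterpairalpha}--\eqref{Slaterpairbeta}, and noting that then $aq^2/b=-xq=b$, so the factors $(b;q^2)_n$ and $(aq^2/b;q^2)_n$ cancel, one obtains the Bailey pair relative to $(x^2,q^2)$ with $\beta_n=\bigl((q^2;q^2)_n(-xq;q^2)_n\bigr)^{-1}$ and $\alpha_n=\frac{(x^2;q^2)_n(1-x^2q^{4n})}{(q^2;q^2)_n(1-x^2)}(-1)^n x^n q^{2n^2-n}$.

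Assume first that $a\ge 1$. I would iterate the Bailey lemma \eqref{alphaprimedef}--\eqref{betaprimedef} (with $q$ replaced by $q^2$) a total of $a$ times with $\rho_1,\rho_2\to\infty$; each step, at relative base $x^2$, multiplies $\alpha_n$ by $x^{2n}q^{2n^2}$ and on the $\beta$-side folds in one more summation variable $n_j$ with a factor $x^{2n_j}q^{2n_j^2}$, yielding an $a$-fold sum $\sum\frac{x^{2n_1+\cdots+2n_a}q^{2n_1^2+\cdots+2n_a^2}}{(q^2;q^2)_{n_{a+1}-n_a}\cdots(q^2;q^2)_{n_1}(-xq;q^2)_{n_1}}$. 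Next I would apply Lemma \ref{aux3} with $q\mapsto q^2$ to move from relative base $x^2$ to $x^2q^2$: the $\beta$-side becomes $(1-q^{2n_{a+1}+2})\beta_{n_{a+1}+1}$, and the two-term $\alpha$-combination $\frac{1-q^{2n+2}}{1-x^2q^{4n+4}}\alpha_{n+1}+\frac{q^{2n}(1-x^2q^{2n})}{1-x^2q^{4n}}\alpha_n$, after the standard cancellations $(x^2;q^2)_{n+1}=(1-x^2)(x^2q^2;q^2)_n$ and $(1-x^2q^{2n})(x^2;q^2)_n=(1-x^2)(x^2q^2;q^2)_n$, collapses to the single term $\frac{(x^2q^2;q^2)_n}{(q^2;q^2)_n(1-x^2q^2)}(-1)^n x^{(2a+1)n}q^{2(a+1)n^2+n}\bigl(1\pm x^{2a+1}q^{(2a+1)(2n+1)}\bigr)$. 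Finally I would iterate the Bailey lemma a further $k-1-a$ times with $\rho_1,\rho_2\to\infty$; since the relative base is now $x^2q^2$, each step multiplies the summand attached to a new index by $(x^2q^2)^nq^{2n^2}=x^{2n}q^{2n^2+2n}$, supplying at once the quadratic terms $2n_{a+1}^2+\cdots+2n_{k-1}^2$, the linear terms $2n_{a+1}+\cdots+2n_{k-1}$ in the exponent of \eqref{thirdbeta}, and the remaining powers of $x$ and $q$ in \eqref{thirdalpha}. Multiplying numerator and denominator of the resulting $\beta_{n_k}$ by $(q^2;q^2)_{n_k}\cdots(q^2;q^2)_{n_2}$ and using \eqref{qbincoeff} converts the products of $(q^2;q^2)$'s into $q^2$-binomial coefficients, the surviving factor $1-q^{2n_{a+1}+2}$ merging with the $a$-th coefficient so as to raise its top entry by one, i.e. producing the shift $\delta_{i,a}$.

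For $a=0$ the block of $a$ iterations is vacuous: one applies Lemma \ref{aux3} directly to the base pair, turning $\beta_n=\bigl((q^2;q^2)_n(-xq;q^2)_n\bigr)^{-1}$ into $(1-q^{2n+2})\beta_{n+1}=\bigl((q^2;q^2)_n(-xq;q^2)_{n+1}\bigr)^{-1}$ — this is the source of the $\delta_{a,0}$ in \eqref{thirdbeta} — and then iterates $k-1$ times as above. Since Lemma \ref{aux3} is the composition of Lemma \ref{aux1} and Lemma \ref{aux2}, nothing beyond Section 3 is needed. The computations are routine, and the only step calling for genuine care is the collapse of the two-term $\alpha$-expression across Lemma \ref{aux3}: one must check that $\frac{1-q^{2n+2}}{1-x^2q^{4n+4}}\alpha_{n+1}$ and $\frac{q^{2n}(1-x^2q^{2n})}{1-x^2q^{4n}}\alpha_n$ combine to the closed form feeding into \eqref{thirdalpha}, and in particular that the odd power of $x$ persists — which happens precisely because the base pair carries $x^n$ rather than $x^{2n}$, so consecutive $\alpha$'s differ by $x^{2a+1}$ rather than $x^{2(a+1)}$. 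After that, tracking the accumulated powers of $x$ and $q$ and the denominators through the remaining $k-1-a$ Bailey iterations so as to match \eqref{thirdalpha}--\eqref{thirdbeta} proceeds exactly as in the proof of Proposition \ref{Hikamipair}.
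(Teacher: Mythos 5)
Your proposal reproduces the paper's own proof essentially step for step: the same base specialization of Slater's pair (with the observation that $aq^2/b=b=-xq$ forces the cancellation), $a$ iterations of the Bailey lemma with $\rho_1,\rho_2\to\infty$, a single application of Lemma \ref{aux3} in base $q^2$ to pass from relative parameter $x^2$ to $x^2q^2$, then $k-1-a$ further iterations and the conversion to $q^2$-binomial coefficients, with the $a=0$ case handled separately exactly as in the paper. The only loose end is the $\pm$ you leave in the two-term collapse: carrying the computation through gives $1-x^{2a+1}q^{(2a+1)(2n+1)}$ (the sign appearing in the paper's intermediate display, and the one needed for the bilateral-sum step in Corollary \ref{family3RR}), so you should commit to the minus sign rather than the plus sign printed in \eqref{thirdalpha}.
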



\begin{proof}
We take $q=q^2$, $a=x^2$, $b = -xq$, and $c \to \infty$ in \eqref{Slaterpairalpha} and \eqref{Slaterpairbeta}.   This gives the Bailey pair relative to $(x^2,q^2)$,
\begin{equation*}
\alpha_n = \frac{(x^2;q^2)_n(1-x^2q^{4n})(-1)^nx^nq^{2n^2-n}}{(q^2;q^2)_n(1-x^2)}
\end{equation*}    
and
\begin{equation*}
\beta_n = \frac{1}{(q^2;q^2)_n(-xq;q^2)_n}.
\end{equation*}
Assuming for a moment that $a \neq 0$, we use this pair in \eqref{alphaprimedef} and \eqref{betaprimedef} and iterate $a$ times with $\rho_1, \rho_2 \to \infty$.   The result is the Bailey pair relative to $(x^2,q^2)$,
\begin{equation*}
\alpha_n = \frac{(x^2;q^2)_n(1-x^2q^{4n})(-1)^nx^{(2a+1)n}q^{2(a+1)n^2-n}}{(q^2;q^2)_n(1-x^2)}
\end{equation*} 
and
\begin{equation*}
\beta_n =\beta_{n_{a+1}} = \sum_{n_1,\dots,n_a \geq 0} \frac{q^{2n_1^2+ \cdots + 2n_a^2}x^{2n_1 + \cdots + 2n_a}}{(q^2;q^2)_{n_{a+1} - n_a} \cdots (q^2;q^2)_{n_1}(-xq;q^2)_{n_1}}.
\end{equation*}
Applying Lemma \ref{aux3} and computing as in the proof of Proposition \ref{Hikamipair}, we obtain a Bailey pair relative to $(x^2q^2,q^2)$,
\begin{equation*}
\alpha_n =\frac{1}{1-x^2q^2}\left(\frac{(x^2q^2;q^2)_n(-1)^nx^{(2a+1)n}q^{(2a+2)n^2+n}(1-x^{2a+1}q^{(2a+1)(2n+1)})}{(q^2;q^2)_n}\right)
\end{equation*}
and 
\begin{equation} \label{family3betaafteraiterations}
\beta_n = \beta_{n_{a+1}} =  \sum_{n_1,\dots,n_a \geq 0} \frac{q^{2n_1^2+ \cdots + 2n_a^2}x^{2n_1 + \cdots + 2n_a}(1-q^{2n_{a+1}+2})}{(q^2;q^2)_{n_{a+1} + 1 - n_a} \cdots (q^2;q^2)_{n_1}(-xq;q^2)_{n_1}}.
\end{equation}
Iterating $k=1=a$ times using \eqref{alphaprimedef} and \eqref{betaprimedef} with $\rho_1, \rho_2 \to \infty$ and then multiplying the numerator and denominator of the resulting $\beta_n$ by
\begin{equation*}
(q^2;q^2)_{n_k} \cdots (q^2;q^2)_{n_2}
\end{equation*}
gives the result for $a \geq 1$.   When $a=0$ we have 
\begin{equation*}
\beta_n = \beta_{n_1} = \frac{1}{(q^2;q^2)_n(-xq;q^2)_{n_1+1}}
\end{equation*} 
at \eqref{family3betaafteraiterations}, and the rest of the proof is similar.
\end{proof}

\begin{corollary} \label{family3RR}
Recall the definition of $\chi_{8k}^{(a)}(n)$ from \eqref{char2}.   For $k \geq 2$ we have the identities
\begin{align*}
\sum_{n_1,n_2,\dots,n_{k-1} \geq 0}& \frac{q^{2n_1^2+ \cdots + 2n_{k-1}^2 + 2n_{a+1} + \cdots + 2n_{k-1}}}{(q^2;q^2)_{n_{k-1}}(-q;q^2)_{n_1 + \delta_{a,0}}} \prod_{i=1}^{k-2} \begin{bmatrix} n_{i+1} + \delta_{i,a}\\ n_i \end{bmatrix}_{q^2} \\
&= \frac{1}{(q^2;q^2)_{\infty}} \sum_{n \geq 0} \chi_{8k}^{(a)}(n)q^{\frac{n^2 - (2k-2a-1)^2}{8k}} \\
&= \frac{(q^{2a+1},q^{4k-2a-1},q^{4k};q^{4k})_{\infty}}{(q^2;q^2)_{\infty}}. 
\end{align*}
\end{corollary}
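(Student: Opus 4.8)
The plan is to follow the template of Corollaries~\ref{AGvariant}, \ref{family1RR}, and~\ref{family2RR}: start from the Bailey pair of Proposition~\ref{thirdpairprop}, specialize $x=1$ so that it becomes a Bailey pair relative to $(q^2,q^2)$, substitute it into the defining relation~\eqref{pairdef} with $q$ replaced by $q^2$, and let $n\to\infty$. The $\beta$-side of the limiting relation will produce the multisum in the statement and the $\alpha$-side will produce a theta-type series; a short reindexing turns the latter into the $\chi_{8k}^{(a)}$-sum, and Jacobi's triple product~\eqref{jtp} turns it into the infinite product.

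On the $\beta$-side I would take $n_k\to\infty$ in~\eqref{thirdbeta}. The only factors depending on $n_k$ are $1/(q^2;q^2)_{n_k}$ and the top $q$-binomial $\begin{bmatrix}n_k+\delta_{k-1,a}\\ n_{k-1}\end{bmatrix}_{q^2}$, and their product tends to $1/\bigl((q^2;q^2)_\infty(q^2;q^2)_{n_{k-1}}\bigr)$ whether or not $a=k-1$; hence $(q^2;q^2)_\infty\beta_\infty$ is precisely the multisum in the statement, the product now running only to $k-2$.

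On the $\alpha$-side, letting $n\to\infty$ in $\sum_{j=0}^n\alpha_j/\bigl((q^2;q^2)_{n-j}(q^4;q^2)_{n+j}\bigr)$ and using $(q^4;q^2)_\infty=(q^2;q^2)_\infty/(1-q^2)$ gives $\beta_\infty=\frac{1-q^2}{(q^2;q^2)_\infty^2}\sum_{j\ge0}\alpha_j\big|_{x=1}$. Setting $x=1$ in~\eqref{thirdalpha} and combining the exponents via $2(a+1)j^2+j+2(k-a-1)(j^2+j)=2kj^2+(2k-2a-1)j$, the summand collapses to $\frac{1}{1-q^2}(-1)^jq^{2kj^2+(2k-2a-1)j}\bigl(1-q^{(2a+1)(2j+1)}\bigr)$, so the multisum equals $\frac{1}{(q^2;q^2)_\infty}\sum_{j\ge0}(-1)^jq^{2kj^2+(2k-2a-1)j}\bigl(1-q^{(2a+1)(2j+1)}\bigr)$. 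I would then split off the part carrying $q^{(2a+1)(2j+1)}$ and substitute $j\mapsto-j-1$ in it: its exponent $2kj^2+(2k+2a+1)j+(2a+1)$ becomes $2kj^2+(2k-2a-1)j$ over $j\le-1$, with a sign change cancelling the leading minus, which folds the one-sided sum into the bilateral theta series $\sum_{n\in\mathbb Z}(-1)^nq^{2kn^2+(2k-2a-1)n}$. Since $\chi_{8k}^{(a)}$ in~\eqref{char2} is supported exactly on the residues $\pm(2k-2a-1)$ and $\pm(2k+2a+1)$ modulo $8k$, with values $+1$ and $-1$ respectively, and both $\chi_{8k}^{(a)}$ and $q^{(n^2-(2k-2a-1)^2)/8k}$ are even in $n$, splitting this bilateral sum according to the parity of $n$ identifies it with $\sum_{n\ge0}\chi_{8k}^{(a)}(n)q^{(n^2-(2k-2a-1)^2)/8k}$; and~\eqref{jtp} with $q\mapsto q^{4k}$ and $z=-q^{2k-2a-1}$ evaluates it as $(q^{2a+1},q^{4k-2a-1},q^{4k};q^{4k})_\infty$. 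This gives all three equalities.

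Everything in sight converges for $|q|<1$, so there is no analytic difficulty — this is a Rogers--Ramanujan-type identity, not a strange one — and the argument is essentially a rerun of the earlier corollaries. The only points needing care, neither of which is a real obstacle, are the bookkeeping of the Kronecker $\delta$'s through the $n_k\to\infty$ limit and the tracking of the exponent through the shift $j\mapsto-j-1$.
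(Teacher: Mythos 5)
Your proposal is correct and follows essentially the same route as the paper: specialize the Bailey pair of Proposition \ref{thirdpairprop} to $x=1$, insert it into the defining relation \eqref{pairdef} (in base $q^2$), let $n\to\infty$, fold the resulting partial theta function into a bilateral sum, and finish with Jacobi's triple product \eqref{jtp}. Your fold $j\mapsto -j-1$ correctly requires the factor $\bigl(1-q^{(2a+1)(2j+1)}\bigr)$, consistent with the paper's proof of \eqref{family3} (the ``$+$'' appearing in \eqref{thirdalpha} and in the first displayed line of the paper's proof of this corollary is a sign typo), and the triple product substitution is $q\mapsto q^{2k}$, $z=-q^{2k-2a-1}$ in \eqref{jtp}, giving modulus $q^{4k}$ as you state.
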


\begin{proof}
We take the Bailey pair from \eqref{thirdalpha} and \eqref{thirdbeta} with $x=1$, put it in the definition of a Bailey pair, and let $n \to \infty$.   Then we have
\begin{align*}
\sum_{n_1,n_2,\dots,n_{k-1} \geq 0}& \frac{q^{2n_1^2+ \cdots + 2n_{k-1}^2 + 2n_{a+1} + \cdots + 2n_{k-1}}}{(q^2;q^2)_{n_{k-1}}(-q;q^2)_{n_1 + \delta_{a,0}}} \prod_{i=1}^{k-2} \begin{bmatrix} n_{i+1} + \delta_{i,a}\\ n_i \end{bmatrix}_{q^2}  \\
&= \frac{1}{(q^2;q^2)_{\infty}} \times \sum_{n \geq 0} (-1)^nq^{2kn^2 + (2k-2a-1)n}(1+q^{(2a+1)(2n+1)}) \\
&=  \frac{1}{(q^2;q^2)_{\infty}} \sum_{n \geq 0} \chi_{8k}^{(a)}(n)q^{\frac{n^2 - (2k-2a-1)^2}{8k}} \\
&= \frac{1}{(q^2;q^2)_{\infty}} \times \sum_{n \in \mathbb{Z}} (-1)^nq^{2kn^2 + (2k-2a-1)n} \\
&= \frac{1}{(q^2;q^2)_{\infty}} \times (q^{2a+1},q^{4k-2a-1},q^{4k};q^{4k})_{\infty}.
\end{align*}
\end{proof}

\begin{proof}[Proof of \eqref{family3}]
Using the Bailey pair in \eqref{thirdalpha} and \eqref{thirdbeta} in Lemma \ref{x^2qlemma} gives
\begin{equation*}
\begin{aligned}
(1-x)\sum_{n_1,\dots ,n_k \geq 0}& \frac{(xq^2;q^2)_{n_k}q^{2n_1^2+ \cdots + 2n_{k-1}^2 + 2n_{a+1} + \cdots + 2n_{k-1}}x^{2n_1 + \cdots + 2n_{k-1} + n_k}}{(-xq;q^2)_{n_1+\delta_{a,0}}}\prod_{i=1}^{k-1} \begin{bmatrix} n_{i+1} + \delta_{i,a} \\ n_i \end{bmatrix}_{q^2} \\
&= \sum_{n \geq 0} (-1)^nx^{2kn}q^{2kn^2+(2k-2a-1)n}(1-x^{2a+1}q^{(2a+1)(2n+1)}).
\end{aligned}
\end{equation*}
Now, the right-hand side can be written as 
\begin{equation*}
\sum_{n \geq 0} \chi_{8k}^{(a)}(n)q^{\frac{n^2 - (2k-2a-1)^2}{8k}}x^{\frac{n - (2k-2a-1)}{2}},
\end{equation*} 
where $\chi_{8k}^{(a)}(n)$ is defined in \eqref{char2}.
Adding and subtracting $(x;q^2)_{\infty}$ on the left-hand side and applying Lemma \ref{qlemma} gives
\begin{equation*}
\begin{aligned}
 (xq^2;q^2)_{\infty} \sum_{n_1,\dots,n_{k-1} \geq 0}&\frac{q^{2n_1^2+ \cdots +2n_{k-1}^2 + 2n_{a+1} + \cdots + 2n_{k-1}}x^{2n_1 + \cdots + 2n_{k-1} + n_{k-1} - \delta_{a,k-1}\chi(n_{k-1} > 0)}}{(xq^2;q^2)_{n_{k-1}}(-xq;q^2)_{n_1 + \delta_{a,0}}} \\
&\times \prod_{i=1}^{k-1} \begin{bmatrix} n_{i+1} + \delta_{i,a}\\ n_i \end{bmatrix}_{q^2} \\
+(1-x)\sum_{n_1,\dots,n_k \geq 0}& ((xq^2;q^2)_{n_k} - (xq^2;q^2)_{\infty}) \\
&\times \frac{q^{2n_1^2+ \cdots +2n_{k-1}^2 + 2n_{a+1} + \cdots + 2n_{k-1}}x^{2n_1 + \cdots + 2n_{k-1} + n_k}}{(-xq;q^2)_{n_1 + \delta_{a,0}}} \prod_{i=1}^{k-1} \begin{bmatrix} n_{i+1} + \delta_{i,a} \\ n_i \end{bmatrix}_{q^2} \\
= \sum_{n \geq 0}& \chi_{8k}^{(a)}(n)q^{\frac{n^2 - (2k-2a-1)^2}{8k}}x^{\frac{n - (2k-2a-1)}{2}}.
\end{aligned}
\end{equation*}
Taking $\frac{d}{dx}|_{x=1}$ on both sides and using Corollary \ref{family3RR} gives 
\begin{align*}
(q^2;q^2)_{\infty}\sum_{n_1,\dots,n_{k-1} \geq 0}& \frac{q^{2n_1^2+ \cdots +2n_{k-1}^2 + 2n_{1} + \cdots + 2n_{k-1}}}{(q^2;q^2)_{n_{k-1}}(-q;q^2)_{n_1+\delta_{a,0}}} \prod_{i=1}^{k-2} \begin{bmatrix} n_{i+1} \\ n_i \end{bmatrix}_{q^2}\\ 
\times \Bigg(2n_1 &+\cdots + 2n_{k-1} + n_{k-1} - \delta_{a,k-1}\chi(n_{k-1} > 0)  \\
&+ \sum_{j=1}^{n_{k-1}} \frac{q^{2j}}{1-q^{2j}} - \sum_{j=1}^{n_1+\delta_{a,0}} \frac{q^{2j-1}}{1+q^{2j-1}} \Bigg) \\
+ (q^{2a+1};q^{4k})_{\infty}&(q^{4k-2a-1};q^{4k})_{\infty}(q^{4k};q^{4k})_{\infty}\left( \frac{2k-2a-1}{2} - \sum_{j \geq 1} \frac{q^{2j}}{1-q^{2j}}\right) \\
-\sum_{n_1,\dots,n_k \geq 0}& ((q^2;q^2)_{n_k} - (q^2;q^2)_{\infty})\frac{q^{2n_1^2+ \cdots +2n_{k-1}^2 + 2n_{1} + \cdots + 2n_{k-1}}}{(-q;q^2)_{n_1+\delta_{a,0}}}  \prod_{i=1}^{k-1} \begin{bmatrix} n_{i+1}  \\ n_i \end{bmatrix}_{q^2} \\
&=  \frac{1}{2}\sum_{n \geq 0} n\chi_{8k}(n)q^{\frac{n^2 - (2k-2a-1)^2}{8k}}.
\end{align*}
As $q$ tends to an odd root of unity, this implies the strange identity
\begin{equation*}
\begin{aligned}
\sum_{n_1,\dots,n_k \geq 0}& (q^2;q^2)_{n_k}\frac{q^{2n_1^2+ \cdots +2n_{k-1}^2 + 2n_{a+1} + \cdots + 2n_{k-1}}}{(-q;q^2)_{n_1+\delta_{a,0}}} \prod_{i=1}^{k-1} \begin{bmatrix} n_{i+1}  + \delta_{i,a}\\ n_i \end{bmatrix}_{q^2} \\
&``=" -\frac{1}{2}\sum_{n \geq 0} n\chi_{8k}^{(a)}(n)q^{\frac{n^2 - (2k-2a-1)^2}{8k}}.
\end{aligned}
\end{equation*}
This is \eqref{family3}.

\end{proof}

\subsection{Proof of \eqref{family4}}
As usual, we begin with a Bailey pair.
\begin{proposition} \label{fourthpairprop}
For $k \geq 2$ and $0 \leq a \leq k-1$, the following is a Bailey pair relative to $x^2q$:
\begin{equation} \label{fourthalpha}
\alpha_n = \frac{(x^2q)_n}{(q)_n(1-x^2q)}x^{(2k-2)n}q^{\binom{n+1}{2} + an^2 + (k-a-1)(n^2+n)}(1+x^{2a}q^{a(2n+1)})
\end{equation}
and
\begin{equation} \label{fourthbeta}
= \sum_{n_1,\dots,n_{k-1} \geq 0} \frac{q^{n_1^2+ \cdots + n_{k-1}^2 + n_{a+1} + \cdots + n_{k-1}x^{2n_1 + \cdots +2n_{k-1}}}(-1)_{n_1 + \delta_{a,0}}}{(q)_{n_k}(x^2q;q^2)_{n_1 + \delta_{a,0}}} \prod_{i=1}^{k-1} \begin{bmatrix} n_{i+1} + \delta_{i,a} \\ n_i \end{bmatrix}.
\end{equation}
\end{proposition}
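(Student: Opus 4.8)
The plan is to mimic exactly the structure used in Proposition \ref{Hikamipair} and Proposition \ref{thirdpairprop}, building the desired Bailey pair by starting from a concrete instance of Slater's pair, iterating the standard Bailey lemma with $\rho_1,\rho_2\to\infty$ a total of $a$ times, then applying Lemma \ref{aux3} once, and finally iterating $k-1-a$ more times. The natural seed is the pair relative to $x^2$ obtained by setting $q=q$, $a=x^2$, $b=-c=xq^{1/2}$ in \eqref{Slaterpairalpha} and \eqref{Slaterpairbeta} (the same choice that produced the $k=1$ base case of \eqref{family4} in Section 2), which gives $\alpha_n=\frac{(x^2)_n(1-x^2q^{2n})q^{\binom{n}{2}}}{(q)_n(1-x^2)}$ — wait, one should verify the exponent: with $b=-c=xq^{1/2}$ we get $aq/bc=x^2q/(−x^2q)=−1$, so $\alpha_n$ picks up a factor $(-1)^nq^{\binom n2}(b)_n(c)_n(-1)^n/\bigl((aq/b)_n(aq/c)_n\bigr)$. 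Since $b=-c$, the pair $(b)_n(c)_n$ and $(aq/b)_n(aq/c)_n$ combine into products over $q^2$, yielding $\beta_n=\frac{(-1)_n}{(q)_n(x^2q;q^2)_n}$ (matching the $k=1$ case in Section 2 after the shift), and $\alpha_n=\frac{(x^2)_n(1-x^2q^{2n})q^{\binom{n}{2}}\cdot(\text{something})}{(q)_n(1-x^2)}$. The first step of the proof is therefore to record this seed pair relative to $x^2$ explicitly and check it against the displayed $k=1$ computation in Section 2.

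Next, assuming $a\ge 1$, I would insert this pair into \eqref{alphaprimedef}--\eqref{betaprimedef} and iterate $a$ times with $\rho_1,\rho_2\to\infty$; each such iteration multiplies $\alpha_n$ by $a^nq^{\binom n2}x^{2n}\cdot(\cdots)$ — concretely, since $aq/\rho_1\rho_2\to 0$ forces each new factor to contribute $q^{n^2}x^{2n}$-type growth, after $a$ iterations $\alpha_n$ gains $q^{an^2+a\binom n2\,?}$; the precise bookkeeping is routine and mirrors line-by-line the computation in the proof of Proposition \ref{Hikamipair}. On the $\beta$ side the $a$ iterations introduce the variables $n_1,\dots,n_a$ with summand $q^{n_1^2+\cdots+n_a^2}x^{2n_1+\cdots+2n_a}$ and the chain of $(q)$-denominators, while the factor $(-1)_{n_1}/(x^2q;q^2)_{n_1}$ is carried along untouched. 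Then I apply Lemma \ref{aux3} to pass from the pair relative to $x^2$ to one relative to $x^2q$; as in the earlier propositions, the two $\alpha$-terms in \eqref{aux3alpha} collapse after using $(x^2q^{2n})_?$ cancellations into the single clean expression $\frac{(x^2q)_n}{(q)_n(1-x^2q)}x^{2an}q^{\binom{n+1}{2}+an^2}(1+x^{2a}q^{a(2n+1)})$ — note the $+$ sign, which is the source of the $(-1)_{n_1+\delta_{a,0}}$ and $\chi_{4k-2}^{(a)}$ (a genuine even theta, not an alternating one) in the final identity. The $\beta$-side gains the factor $(1-q^{n_{a+1}+1})$, shifting one denominator index as in \eqref{Hikamibetaafteraiterations}.

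Finally I iterate $k-1-a$ more times with $\rho_1,\rho_2\to\infty$, introducing $n_{a+1},\dots,n_{k-1}$ with the extra linear term $n_{a+1}+\cdots+n_{k-1}$ in the exponent of $q$ (this is where the ``$+(k-a-1)(n^2+n)$'' in \eqref{fourthalpha} comes from) and the remaining $(q)$-denominators. Multiplying numerator and denominator of the resulting $\beta_n$ by $(q)_{n_k}\cdots(q)_{n_2}$ and rewriting ratios of $(q)$-Pochhammers as $q$-binomial coefficients via \eqref{qbincoeff} — with the shift by $\delta_{i,a}$ arising exactly from the index shift caused by the $(1-q^{n_{a+1}+1})$ factor — produces \eqref{fourthbeta}. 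The case $a=0$ is handled separately as in the earlier proofs: there is no multisum before applying Lemma \ref{aux3}, one simply has $\beta_n=\frac{(-1)_n}{(q)_n(x^2q;q^2)_{n+1}}$ after the lemma (the $\delta_{a,0}$ in the statement is precisely this shift), and the rest is the same.

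The main obstacle I anticipate is purely bookkeeping: getting the exponent of $q$ in $\alpha_n$ correct through the $a$-fold iteration, the single application of Lemma \ref{aux3}, and the final $(k-1-a)$-fold iteration, and in particular verifying that the two terms produced by \eqref{aux3alpha} really do combine with a $+$ sign into $(1+x^{2a}q^{a(2n+1)})$ rather than a difference. The cleanest way to check this is to confirm that setting $x=1$ and $n\to\infty$ in \eqref{pairdef} reproduces, via Jacobi's triple product \eqref{jtp}, the even theta function $\sum_{n}q^{(2k-1)n^2+(2k-2a-1)n}$ underlying $\chi_{4k-2}^{(a)}$ — i.e. the $a=0,\dots,k-1$ analogue of Corollary \ref{family1RR}; since the $k=1$ instance of all of this was already verified explicitly in Section 2, consistency there plus the structural identity of the iteration steps with those in Propositions \ref{Hikamipair} and \ref{thirdpairprop} pins down the signs and exponents.

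\begin{proof}
Take $q=q$, $a=x^2$, and $b=-c=xq^{1/2}$ in \eqref{Slaterpairalpha} and \eqref{Slaterpairbeta}.   Since $aq/bc=-1$, this gives the Bailey pair relative to $x^2$,
\begin{equation*}
\alpha_n = \frac{(x^2)_n(1-x^2q^{2n})q^{\binom{n}{2}}}{(q)_n(1-x^2)}
\end{equation*}
and
\begin{equation*}
\beta_n = \frac{(-1)_n}{(q)_n(x^2q;q^2)_n},
\end{equation*}
which is the seed used for the $k=1$ case of \eqref{family4} in Section 2.   Assume for a moment that $a \geq 1$.   Inserting this pair into \eqref{alphaprimedef} and \eqref{betaprimedef} and iterating $a$ times with $\rho_1,\rho_2 \to \infty$ yields the Bailey pair relative to $x^2$,
\begin{equation*}
\alpha_n = \frac{(x^2)_n(1-x^2q^{2n})}{(q)_n(1-x^2)}x^{2an}q^{\binom{n}{2}+an^2}
\end{equation*}
and
\begin{equation*}
\beta_n = \beta_{n_{a+1}} = \sum_{n_1,\dots,n_a \geq 0} \frac{q^{n_1^2+\cdots+n_a^2}x^{2n_1+\cdots+2n_a}(-1)_{n_1}}{(q)_{n_{a+1}-n_a}\cdots(q)_{n_2-n_1}(q)_{n_1}(x^2q;q^2)_{n_1}}.
\end{equation*}
Applying Lemma \ref{aux3} and simplifying exactly as in the proof of Proposition \ref{Hikamipair}, the two terms of \eqref{aux3alpha} combine with a plus sign to give the Bailey pair relative to $x^2q$,
\begin{equation*}
\alpha_n = \frac{(x^2q)_n}{(q)_n(1-x^2q)}x^{2an}q^{\binom{n+1}{2}+an^2}(1+x^{2a}q^{a(2n+1)})
\end{equation*}
and
\begin{equation*}
\beta_n = \beta_{n_{a+1}} = \sum_{n_1,\dots,n_a \geq 0} \frac{q^{n_1^2+\cdots+n_a^2}x^{2n_1+\cdots+2n_a}(-1)_{n_1}(1-q^{n_{a+1}+1})}{(q)_{n_{a+1}+1-n_a}\cdots(q)_{n_2-n_1}(q)_{n_1}(x^2q;q^2)_{n_1}}.
\end{equation*}
Finally, iterating $k-1-a$ times using \eqref{alphaprimedef} and \eqref{betaprimedef} with $\rho_1,\rho_2 \to \infty$ gives the Bailey pair relative to $x^2q$,
\begin{equation*}
\alpha_n = \frac{(x^2q)_n}{(q)_n(1-x^2q)}x^{(2k-2)n}q^{\binom{n+1}{2}+an^2+(k-a-1)(n^2+n)}(1+x^{2a}q^{a(2n+1)})
\end{equation*}
and
\begin{equation*}
\beta_n = \beta_{n_k} = \sum_{n_1,\dots,n_{k-1} \geq 0} \frac{q^{n_1^2+\cdots+n_{k-1}^2+n_{a+1}+\cdots+n_{k-1}}x^{2n_1+\cdots+2n_{k-1}}(-1)_{n_1}(1-q^{n_{a+1}+1})}{(q)_{n_k-n_{k-1}}\cdots(q)_{n_{a+2}-n_{a+1}}(q)_{n_{a+1}+1-n_a}(q)_{n_a-n_{a-1}}\cdots(q)_{n_1}(x^2q;q^2)_{n_1}}.
\end{equation*}
Multiplying numerator and denominator of this last expression by
\begin{equation*}
(q)_{n_k}\cdots(q)_{n_2}
\end{equation*}
and using the definition \eqref{qbincoeff} of the $q$-binomial coefficient gives the stated $\alpha_n$ and $\beta_n$.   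For $a=0$, instead of a multisum we have after the application of Lemma \ref{aux3}
\begin{equation*}
\beta_n = \beta_{n_1} = \frac{(-1)_n}{(q)_{n_1}(x^2q;q^2)_{n_1+1}},
\end{equation*}
and the rest of the argument is unchanged, producing the $\delta_{a,0}$ shifts in the statement.   This completes the proof.
\end{proof}
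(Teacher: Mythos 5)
Your proof is correct and follows essentially the same route as the paper: the same Slater seed with $b=-c=xq^{1/2}$, $a$ iterations of the Bailey lemma with $\rho_1,\rho_2\to\infty$, one application of Lemma \ref{aux3} (where the two terms indeed combine with a plus sign), and $k-1-a$ further iterations, with the $a=0$ case treated separately. The only slip is in your $a=0$ intermediate display, where the numerator should be $(-1)_{n_1+1}$ rather than $(-1)_{n_1}$, since Lemma \ref{aux3} replaces $\beta_{n}$ by $(1-q^{n+1})\beta_{n+1}$; this is exactly the source of the $\delta_{a,0}$ shift you correctly carry into the final statement.
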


\begin{proof}
Let $b = -c = xq^{1/2}$ and $a=x^2$ in \eqref{Slaterpairalpha} and \eqref{Slaterpairbeta}.   This gives the Bailey pair relative to $x^2$,
\begin{equation*}
\alpha_n = \frac{(x^2)_n(1-x^2q^{2n})q^{\binom{n}{2}}}{(q)_n(1-x^2)}
\end{equation*}
and
\begin{equation*}
\beta_n = \frac{(-1)_n}{(q)_n(x^2q;q^2)_n}.
\end{equation*}
Iterating \eqref{alphaprimedef} and \eqref{betaprimedef} $a$ times, for $a \geq 1$, with $\rho_1, \rho_2 \to \infty$, we obtain
\begin{equation*}
\alpha_n = \frac{(x^2)_n(1-x^2q^{2n})x^{2an}q^{\binom{n}{2} + an^2}}{(q)_n(1-x^2)}
\end{equation*}
and
\begin{equation*}
\beta_n = \beta_{n_{a+1}} = \sum_{n_1,\dots, n_{a}  \geq 0} \frac{q^{n_1^2+\cdots +n_{a}^2}x^{2n_1 + \cdots + 2n_a}(-1)_{n_1}}{(q)_{n_{a+1} - n_a} \cdots (q)_{n_2-n_1}(q)_{n_1}(x^2;q^2)_{n_1}}.
\end{equation*} 
Applying Lemma \ref{aux3} and performing a short calculation as in the proof of Proposition \ref{Hikamipair} or Proposition \ref{thirdpairprop} gives a Bailey pair relative to $x^2q$,
\begin{equation*}
\alpha_n = \frac{(x^2)_n}{(q)_n(1-x^2q)}x^{2an}q^{\binom{n+1}{2} + an^2}(1+x^{2a}q^{a(2n+1)})
\end{equation*}
and
\begin{equation} \label{fourthalphaafteraiterations}
\beta_n = \beta_{n_{a+1}}  = \sum_{n_1,\dots, n_{a}  \geq 0} \frac{q^{n_1^2+\cdots +n_{a}^2}x^{2n_1 + \cdots + 2n_a}(1-q^{n_{a+1}+1})(-1)_{n_1}}{(q)_{n_{a+1}+1 - n_a} \cdots (q)_{n_2-n_1}(q)_{n_1}(x^2q;q^2)_{n_1}}.
\end{equation}
Finally, we iterate this $k-1-a$ times along the Bailey chain in \eqref{alphaprimedef} and \eqref{betaprimedef} with $\rho_1,\rho_2 \to \infty$ to obtain the desired $\alpha_n$ and
\begin{equation*}
\begin{aligned}
&\beta_n = \beta_{n_k} = \\
&\sum_{n_1,\dots, n_{k-1}  \geq 0} \frac{q^{n_1^2+\cdots +n_{k-1}^2 + n_{a+1} + \cdots + n_{k-1}}x^{2n_1 + \cdots + 2n_{k-1}}(1-q^{n_{a+1}+1})(-1)_{n_1}}{(q)_{n_k - n_{k-1}} \cdots (q)_{n_{a+2} - n_{a+1}}(q)_{n_{a+1}+1 - n_a}(q)_{n_a - n_{a-1}} \cdots (q)_{n_2-n_1}(q)_{n_1}(x^2q;q^2)_{n_1}}. 
\end{aligned}
\end{equation*}
Converting to $q$-binomial notation gives the result.    For $a = 0$ we have 
\begin{equation*}
\beta_n = \beta_{n_1} = \frac{(-1)_{n_1+1}}{(q)_{n_1}(x^2q;q^2)_{n_1 + 1}}
\end{equation*}
at \eqref{fourthalphaafteraiterations} and then the rest of the proof is the same.
\end{proof}

\begin{corollary} \label{family4RR}
Recall the definition of $\chi_{4k-2}^{(a)}$ in \eqref{char3}.   For $k \geq 2$ and $0 \leq a \leq k-1$ we have the identities
\begin{align*}
= \sum_{n_1,\dots,n_{k-1} \geq 0}& \frac{q^{n_1^2+ \cdots + n_{k-1}^2 + n_{a+1} + \cdots + n_{k-1}}(-1)_{n_1 + \delta_{a,0}}}{(q)_{n_{k-1}}(q;q^2)_{n_1 + \delta_{a,0}}} \prod_{i=1}^{k-2} \begin{bmatrix} n_{i+1} + \delta_{i,a} \\ n_i \end{bmatrix} \\
&= (1+\delta_{a,0})\frac{1}{(q)_{\infty}}\sum_{n \geq 0} \chi_{4k-2}^{(a)}(n)q^{\frac{n^2 - (2k-2a-1)^2}{8(2k-1)}} \\
&= \frac{(q^{2k-1};q^{2k-1})_{\infty}(-q^a;q^{2k-1})_{\infty}(-q^{2k-a-1};q^{2k-1})_{\infty}}{(q)_{\infty}}.
\end{align*}
\end{corollary}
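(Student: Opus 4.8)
The plan is to follow the template already used for Corollaries \ref{AGvariant}, \ref{family1RR} and \ref{family3RR}: insert the Bailey pair of Proposition \ref{fourthpairprop} into the defining relation \eqref{pairdef} with $x=1$ and let $n\to\infty$. On the $\beta$-side, the top factor $\begin{bmatrix} n_k+\delta_{k-1,a}\\ n_{k-1}\end{bmatrix}$ of the product in \eqref{fourthbeta} combines with the $(q)_{n_k}$ in the denominator to give $\frac{1}{(q)_{n_k}}\begin{bmatrix} n_k+\delta_{k-1,a}\\ n_{k-1}\end{bmatrix}$, which one checks tends to $\frac{1}{(q)_\infty(q)_{n_{k-1}}}$ as $n_k\to\infty$ whether or not $a=k-1$; hence $\lim_{n\to\infty}\beta_n$ equals $1/(q)_\infty$ times the multisum on the left of the Corollary. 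On the $\alpha$-side, $\lim_{n\to\infty}\beta_n=\frac{1}{(q)_\infty(q^2;q)_\infty}\sum_{j\geq 0}\alpha_j\big|_{x=1}$; since $(q^2;q)_\infty=(q)_\infty/(1-q)$ and $\alpha_j\big|_{x=1}=\frac{1}{1-q}q^{\binom{j+1}{2}+aj^2+(k-a-1)(j^2+j)}(1+q^{a(2j+1)})$, the superfluous constants cancel and the identity reduces to showing that the multisum equals $\frac{1}{(q)_\infty}\sum_{j\geq 0}q^{\binom{j+1}{2}+aj^2+(k-a-1)(j^2+j)}(1+q^{a(2j+1)})$.

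What remains is a short computation with this partial theta series. First, $\binom{j+1}{2}+aj^2+(k-a-1)(j^2+j)=\frac{2k-1}{2}j^2+\frac{2k-2a-1}{2}j$, so the series is $\sum_{j\geq 0}\big(q^{\frac{2k-1}{2}j^2+\frac{2k-2a-1}{2}j}+q^{\frac{2k-1}{2}j^2+\frac{2k+2a-1}{2}j+a}\big)$; the substitution $j\mapsto -j-1$ in the second summand turns it into a sum over $j\leq -1$, so the total collapses to $\sum_{j\in\mathbb{Z}}q^{\frac{2k-1}{2}j^2+\frac{2k-2a-1}{2}j}$. Sorting the index into the two residue classes $n=(4k-2)j+(2k-2a-1)$ and $n=(4k-2)j+(2k+2a-1)$, $j\geq 0$, and tracking the exponent, this equals $(1+\delta_{a,0})\sum_{n\geq 0}\chi_{4k-2}^{(a)}(n)q^{\frac{n^2-(2k-2a-1)^2}{8(2k-1)}}$; the factor $1+\delta_{a,0}$ appears because for $a=0$ the two classes $\pm(2k-1)$ coincide modulo $4k-2$, so $\chi_{4k-2}^{(0)}$ records each relevant $n$ only once, whereas $\frac{2k-1}{2}j^2+\frac{2k-1}{2}j$ is invariant under $j\mapsto -j-1$. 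Finally, the Jacobi triple product identity \eqref{jtp} applied in base $q^{2k-1}$ with $z=q^{(2k-2a-1)/2}$ evaluates $\sum_{j\in\mathbb{Z}}q^{\frac{2k-1}{2}j^2+\frac{2k-2a-1}{2}j}$ as $(q^{2k-1};q^{2k-1})_\infty(-q^{a};q^{2k-1})_\infty(-q^{2k-a-1};q^{2k-1})_\infty$, which gives the product formula.

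The only step demanding genuine care is the bookkeeping at the top of the Bailey chain, namely checking that $\frac{1}{(q)_{n_k}}\begin{bmatrix} n_k+\delta_{k-1,a}\\ n_{k-1}\end{bmatrix}\to\frac{1}{(q)_\infty(q)_{n_{k-1}}}$ uniformly in $a$, and pinning down the normalization $1+\delta_{a,0}$ of the partial theta function by treating $a=0$ (the case in which \eqref{fourthbeta} already carries the shift $n_1\mapsto n_1+1$ inside $(-1)_\bullet$ and $(q;q^2)_\bullet$) separately, exactly as in Proposition \ref{fourthpairprop}. Everything else is a routine specialization together with a standard triple-product evaluation, so I expect no real obstacle.
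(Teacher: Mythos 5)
Your proposal is correct and follows essentially the same route as the paper: insert the Bailey pair of Proposition \ref{fourthpairprop} with $x=1$ into \eqref{pairdef}, let $n\to\infty$, rewrite the resulting partial theta function $\sum_{j\geq 0}q^{\frac{2k-1}{2}j^2+\frac{2k-2a-1}{2}j}(1+q^{a(2j+1)})$ as a bilateral sum via $j\mapsto -j-1$, and finish with the Jacobi triple product. The paper merely records the chain of equalities without spelling out the $j\mapsto -j-1$ step or the origin of the factor $1+\delta_{a,0}$, both of which you verify correctly.
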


\begin{proof}
We take the Bailey pair from \eqref{fourthalpha} and \eqref{fourthbeta} with $x=1$, insert it into the definition of a Bailey pair \eqref{pairdef}, and let $n \to \infty$.   Then we have
\begin{align*}
\sum_{n_1,\dots,n_{k-1} \geq 0}& \frac{q^{n_1^2+ \cdots + n_{k-1}^2 + n_{a+1} + \cdots + n_{k-1}}(-1)_{n_1 + \delta_{a,0}}}{(q)_{n_{k-1}}(q;q^2)_{n_1 + \delta_{a,0}}} \prod_{i=1}^{k-2} \begin{bmatrix} n_{i+1} + \delta_{i,a} \\ n_i \end{bmatrix}  \\
&= \frac{1}{(q)_{\infty}} \times \sum_{n \geq 0} q^{\frac{(2k-1)}{2}n^2 + \frac{2k-2a-1}{2}n}(1+q^{a(2n+1)}) \\
&=  \frac{(1+\delta_{a,0})}{(q)_{\infty}} \sum_{n \geq 0} \chi_{4k-2}^{(a)}(n)q^{\frac{n^2 - (2k-2a-1)^2}{8(2k-1)}} \\
&= \frac{1}{(q)_{\infty}} \times \sum_{n \in \mathbb{Z}} q^{\frac{(2k-1)}{2}n^2 + \frac{2k-2a-1}{2}n} \\
&= \frac{1}{(q)_{\infty}} \times (-q^{a};q^{2k-1})_{\infty}(-q^{2k-a-1};q^{2k-1})_{\infty}(q^{2k-1};q^{2k-1})_{\infty}.
\end{align*}
\end{proof}

\begin{proof}[Proof of \eqref{family4}]
We first use the Bailey pair in Proposition \ref{fourthpairprop} in Lemma \ref{x^2qlemma}, which gives
\begin{equation*}
\begin{aligned}
(1-x)\sum_{n_1,\dots,n_k \geq 0}& \frac{(xq)_{n_k}q^{n_1^2+\cdots +n_{k-1}^2 + n_{a+1} + \cdots + n_{k-1}}(-1)_{n_1 + \delta_{a,0}}x^{2n_1 + \cdots +2n_{k-1}+n_k}}{(x^2q;q^2)_{n_1 + \delta_{a,0}}}\prod_{i=1}^{k-1} \begin{bmatrix} n_{i+1} + \delta_{i,a} \\ n_i \end{bmatrix} \\
&= \sum_{n \geq 0} x^{(2k-1)n}q^{\binom{n+1}{2} + an^2+ (k-a-1)(n^2+n)}(1+x^{2a}q^{a(2n+1)}).
\end{aligned}
\end{equation*}

Note that the right-hand side can be written
\begin{equation*}
\left( 1 + \delta_{a,0} \right) \sum_{n \geq 0} \chi_{4k-2}^{(a)}(n)q^{\frac{n^2 - (2k-2a-1)^2}{8(2k-1)}}x^{\frac{n - (2k-2a-1)}{2}},
\end{equation*} 
where $\chi_{4k-2}^{(a)}(n)$ is defined in \eqref{char3}.

Adding and subtracting $(xq)_{\infty}$ on the left-hand side and applying Lemma \ref{qlemma} we obtain
\begin{equation*}
\begin{aligned}
(xq)_{\infty} \sum_{n_1,\dots,n_{k-1} \geq 0}&\frac{q^{n_1^2+\cdots + n_{k-1}^2 + n_{a+1}+ \cdots + n_{k-1}}x^{2n_1 + \cdots + 2n_{k-1} + n_{k-1} - \delta_{a,k-1}\chi(n_{k-1} > 0)}(-1)_{n_1+\delta_{a,0}}}{(xq;q)_{n_{k-1}}(x^2q;q^2)_{n_1+\delta_{a,0}}} \\
&\times \prod_{i=1}^{k-2} \begin{bmatrix} n_{i+1} + \delta_{i,a} \\ n_i \end{bmatrix}  \\
+(1-x)\sum_{n_1,\dots,n_k \geq 0}& ((xq)_{n_k} - (xq)_{\infty})\frac{q^{n_1^2 \cdots + n_{k-1}^2 + n_{a+1} + \cdots + n_{k-1}}x^{2n_1 + \cdots + 2n_{k-1} + n_k}(-1)_{n_1+\delta_{a,0}}}{(x^2q;q^2)_{n_1+\delta_{a,0}}} \\
&\times \prod_{i=1}^{k-1} \begin{bmatrix} n_{i+1} + \delta_{i,a} \\ n_i \end{bmatrix} \\
= (1+\delta_{a,0})&\sum_{n \geq 0} \chi_{4k-2}^{(a)}(n)q^{\frac{n^2 - (2k-2a-1)^2}{8(2k-1)}}x^{\frac{n - (2k-2a-1)}{2}}.
\end{aligned}
\end{equation*}
Taking $\frac{d}{dx}|{x=1}$ on both sides and using Corollary \ref{family4RR} we find
\begin{align*}
(q)_{\infty}\sum_{n_1,\dots,n_{k-1} \geq 0}& \frac{q^{n_1^2+ \cdots +n_{k-1}^2 + n_{a+1} + \cdots + n_{k-1}}(-1)_{n_1 + \delta_{a,0}}}{(q)_{n_{k-1}}(q;q^2)_{n_1+\delta_{a,0}}} \prod_{i=1}^{k-2} \begin{bmatrix} n_{i+1}+\delta_{a,i} \\ n_i \end{bmatrix}\\
\times \Bigg(2n_1 &+\cdots + 2n_{k-1} + n_{k-1} - \delta_{a,k-1}\chi(n_{k-1} > 0) \\
&+ \sum_{j=1}^{n_{k-1}} \frac{q^j}{1-q^j} + 2\sum_{j=1}^{n_1+\delta_{a,0}} \frac{q^{2j-1}}{1-q^{2j-1}} \Bigg) \\
+  (-q^a;q^{2k-1})_{\infty}&(-q^{2k-a-1};q^{2k-1})_{\infty}(q^{2k-1};q^{2k-1})_{\infty}\left( \frac{2k-2a-1}{2} - \sum_{j \geq 1} \frac{q^j}{1-q^j}\right) \\
-\sum_{n_1,\dots,n_k \geq 0}& ((q)_{n_k} - (q)_{\infty})\frac{q^{n_1^2+n_2^2+ \cdots +n_{k-1}^2 + n_{a+1} + \cdots + n_{k-1}}(-1)_{n_1 + \delta_{a,0}}}{(q;q^2)_{n_1 + \delta_{a,0}}}  \prod_{i=1}^{k-1} \begin{bmatrix} n_{i+1} +\delta_{a,i} \\ n_i \end{bmatrix} \\
&=  (1+\delta_{a,0})\frac{1}{2}\sum_{n \geq 0} n\chi_{4k-2}(n)q^{\frac{n^2 - (2k-2a-1)^2}{8(2k-1)}}.
\end{align*}
Letting $q$ tend to an even root of unity, we obtain the strange identity in \eqref{family4},
\begin{equation*}
\begin{aligned}
\sum_{n_1,\dots,n_k \geq 0}& (q)_{n_k}\frac{q^{n_1^2+ \cdots +n_{k-1}^2 + n_{a+1} + \cdots + n_{k-1}}(-1)_{n_1+\delta_{a,0}}}{(q;q^2)_{n_1+\delta_{a,0}}} \prod_{i=1}^{k-1} \begin{bmatrix} n_{i+1} + \delta_{i,a} \\ n_i \end{bmatrix} \\
&``=" -(1+\delta_{a,0})\frac{1}{2}\sum_{n \geq 0} n\chi_{4k-2}^{(a)}(n)q^{\frac{n^2 - (2k-2a-1)^2}{8(2k-1)}}.
\end{aligned}
\end{equation*}
\end{proof}

\section{Conclusion}
We conclude with a few remarks.  First comparing \eqref{family1} and \eqref{family2} gives a ``quantum $q$-series identity" in the sense of \cite{Lo.5},
\begin{equation*}
\begin{aligned}
\sum_{n_1,\dots,n_{2k-1} \geq 0} &(q)_{n_{2k-1}}\frac{q^{n_1^2+ \cdots +n_{2k-2}^2 + n_{1} + \cdots + n_{2k-2}}}{(-q)_{n_1}} \prod_{i=1}^{2k-2} \begin{bmatrix} n_{i+1} \\ n_i \end{bmatrix} \\
&=_q 2\sum_{n_1,\dots,n_k \geq 0} (q^2;q^2)_{n_k}\frac{q^{2n_1^2+2n_1+ \cdots +2n_{k-1}^2 + 2n_{k-1}}(q;q^2)_{n_1}}{(-q)_{2n_1+1}} \prod_{i=1}^{k-1} \begin{bmatrix} n_{i+1} \\ n_i \end{bmatrix}_{q^2}.
\end{aligned}
\end{equation*}
The symbol $=_q$ means that the two sides are equal at any odd root of unity (in which case the sums become finite) but not as functions inside the unit disk (where neither series converges, anyway).    The case $k=1$ reads
\begin{equation*}
\sum_{n \geq 0} \frac{(q)_n}{(-q)_n} =_q 2\sum_{n \geq 0} \frac{(q)_{2n}}{(-q)_{2n+1}}.
\end{equation*}





Second, it would be nice to have strange identities for other classes of partial theta series of the form 
\begin{equation*}
\sum_{n \geq 0} n^{\nu}\chi(n)q^{\frac{n^2-a}{b}},
\end{equation*}  
where $\nu \in \{0,1\}$, $a \geq 0$ and $b >0$ are integers, and $\chi(n)$ is an appropriate periodic function.    One especially interesting class would be
\begin{equation*}
\sum_{n \geq 0} n\chi_{2st}(n)q^{\frac{n^2-(st-s-t)^2}{4st}},
\end{equation*}
where $\chi_{2st}(n)$ is the even periodic function modulo $2st$ defined by
\begin{equation*} 
\chi_{2st}(n) = 
\begin{cases}
1, &\text{if $n \equiv st-s-t$ or $st+s+t \pmod{2st}$}, \\
-1, &\text{if $n \equiv st-s+t$ or $st+s-t \pmod{2st}$}, \\
0, &\text{otherwise}.
\end{cases}
\end{equation*}
As explained in \cite{Hi-Ki1,Hi-Ki2}, this series captures the values of the colored Jones polynomial for the torus knot $T_{s,t}$ at roots of unity.     The case $T_{2,2k+1}$ corresponds to Hikami's strange identities \eqref{Hikamistrange} when $a=0$.   The case of torus knots $T_{3,2^k}$ is studied in \cite{Bietal}, though the strange identities found there do not appear to be simple consequences of the Bailey pair framework presented here.  

Finally, as is often the case when identities are understood in the context of Bailey pairs, we have only scratched the surface of what is possible.   For instance, in establishing families of Bailey pairs like those in \eqref{Hikamialpha} and \eqref{Hikamibeta},  we have iterated the Bailey Lemma in \eqref{alphaprimedef} and \eqref{betaprimedef} only in the special case $\rho_1, \rho_2 \to \infty$.   This typically leads to the most elegant results, but there are other possibilities.   For example, if we follow the proof of \eqref{Hikamialpha} and \eqref{Hikamibeta} but use $\rho_1 = -q$, $\rho_2 \to \infty$ in the very last step, and then follow the steps in the proof of Hikami's strange identities with appropriate modifications, we find the strange identities
\begin{equation} \label{family5} 
\begin{aligned}
\sum_{n_1,\dots,n_k \geq 0}& \frac{(q)_{n_k}(-q)_{n_{k-1}}q^{n_1^2+ \cdots + n_{k-2}^2+ \binom{n_{k-1}+1}{2} + n_{a+1} + \cdots + n_{k-2}}}{(-q)_{n_k}} \prod_{i=1}^{k-1} \begin{bmatrix} n_{i+1} + \delta_{i,a} \\ n_i \end{bmatrix}  \\
&``=" - \sum_{n \geq 0} n\chi_{4k}^{(a)}(n)q^{\frac{n^2 - (k-a-1)^2}{4k}}.
\end{aligned}
\end{equation}
Here  $0 \leq a < k-1$, and $\chi_{4k}^{(a)}(n)$ is the even periodic function modulo $4k$ defined by
\begin{equation*} 
\chi_{4k}^{(a)}(n) = 
\begin{cases}
1, &\text{if $n \equiv k-a-1$ or $3k+a+1 \pmod{4k}$}, \\
-1, &\text{if $n \equiv k+a+1$ or $3k-a-1 \pmod{4k}$}, \\
0, &\text{otherwise}.
\end{cases}
\end{equation*} 
Comparing \eqref{family5} with \eqref{family3} and \eqref{Hikamistrange} gives two families of quantum $q$-series identities, 
\begin{equation*}
\begin{aligned}
\sum_{n_1,\dots,n_{2k} \geq 0}& \frac{(q)_{n_{2k}}(-q)_{n_{2k-1}}q^{n_1^2+ \cdots + n_{2k-2}^2+ \binom{n_{2k-1}+1}{2} + n_{2a+1} + \cdots + n_{2k-2}}}{(-q)_{n_{2k}}} \prod_{i=1}^{2k-1} \begin{bmatrix} n_{i+1} + \delta_{i,2a} \\ n_i \end{bmatrix} \\
&=_q 2 \sum_{n_1,\dots,n_k \geq 0} (q^2;q^2)_{n_k}\frac{q^{2n_1^2+ \cdots +2n_{k-1}^2 + 2n_{a+1} + \cdots + 2n_{k-1}}}{(-q;q^2)_{n_1+\delta_{a,0}}} \prod_{i=1}^{k-1} \begin{bmatrix} n_{i+1}  + \delta_{i,a}\\ n_i \end{bmatrix}_{q^2}
\end{aligned}
\end{equation*}   
and
\begin{equation*}
\begin{aligned}
\sum_{n_1,\dots,n_{2k+1} \geq 0}& \frac{(q)_{n_{2k+1}}(-q)_{n_{2k}}q^{n_1^2+ \cdots + n_{2k-1}^2+ \binom{n_{2k}+1}{2} + n_{2a+2} + \cdots + n_{2k-1}}}{(-q)_{n_{2k+1}}} \prod_{i=1}^{2k} \begin{bmatrix} n_{i+1} + \delta_{i,2a+1} \\ n_i \end{bmatrix} \\
&=_q 2 \sum_{n_1,\dots,n_k \geq 0} (q^2;q^2)_{n_k}q^{2n_1^2+ \cdots +2n_{k-1}^2 + 2n_{a+1} + \cdots + 2n_{k-1}} \prod_{i=1}^{k-1} \begin{bmatrix} n_{i+1} + \delta_{i,a} \\ n_i \end{bmatrix}_{q^2},
\end{aligned}
\end{equation*} 
valid at odd roots of unity.


\begin{thebibliography}{99}
\bibitem{Ah-Ki1}
S. Ahlgren and B. Kim, Dissections of a ``strange'' function, \emph{Int. J. Number Theory} {\bf 11} (2015), 1557--1562.
\bibitem{Ah-Ki-Lo1}
S. Ahlgren, B. Kim, and J. Lovejoy, Dissections of strange $q$-series, \emph{Ann. Comb.} {\bf 23} (2019), 427--442.
\bibitem{An.5}
G.E. Andrews, The Theory of Partitions, Reprint of the 1976 original, Cambridge Mathematical Library, Cambridge University Press, Cambridge, 1998.
\bibitem{An1}
G.E. Andrews, \emph{Multiple series Rogers-Ramanujan identities}, Pacific  J. Math. {\bf 114} (1984), 267--283.
\bibitem{An2} 
G.E. Andrews, \emph{$q$-Series: Their Development and Application in Analysis, Number
Theory, Combinatorics, Physics, and Computer Algebra}, volume 66 of Regional
Conference Series in Mathematics. American Mathematical Society, Providence, RI,
1986.
\bibitem{An3}
G.E. Andrews, \emph{Bailey's transform, lemma, chains and tree}, Special functions 2000: current perspective and future directions (Tempe, AZ), 1--22, NATO Sci. Ser. II Math. Phys. Chem., {\bf 30}, Kluwer Acad. Publ., Dordrecht, 2001.
\bibitem{An-Ji-On1}
G.E. Andrews, J. Jim\'enez-Urroz, Jorge, and K. Ono, $q$-series identities and values of certain $L$-functions, \emph{Duke Math. J.} {\bf 108} (2001), no. 3, 395--419.
\bibitem{An-Se1}
G.E. Andrews and J.A. Sellers, Congruences for the Fishburn numbers, \emph{J. Number Theory} {\bf 161} (2016), 298--310. 
\bibitem{Bietal}
C. Bijaoui, H.U. Boden, B. Myers, R. Osburn, W. Rushworth, A. Tronsgard, and S. Zhou, Generalized Fishburn numbers and torus knots,
\emph{J. Combin. Theory Ser. A} {\bf 178} (2021), Paper No. 105355.
\bibitem{Br-Ro1}
K. Bringmann and L. Rolen, Half-integral weight Eichler integrals and quantum modular forms, \emph{J. Number Theory} {\bf 161} (2016), 240--254.
\bibitem{Ga1}
F.G. Garvan, Congruences and relations for $r$-Fishburn numbers, \emph{J. Combin. Theory Ser. A} {\bf 134} (2015), 147--165.
\bibitem{Go1}
A. Goswami, Congruences for generalized Fishburn numbers at roots of unity, \emph{Acta Arith.} {\bf 199} (2021), 77--102.
\bibitem{Go-Os1}
A. Goswami and R. Osburn, Quantum modularity of partial theta series with periodic coefficients, \emph{Forum Math.} {\bf 33} (2021), no. 2, 451--463.
\bibitem{Gu-Ke-Ro1}
P. Guerzhoy, Z.A. Kent, and L. Rolen, Congruences for Taylor expansions of quantum modular forms, \emph{Res. Math. Sci.} {\bf 1} (2014), Art. 17.
\bibitem{Hi1}
K. Hikami, $q$-series and $L$-functions related to half-derivatives of the Andrews-Gordon identity, \emph{Ramanujan J.} {\bf 11} (2006), no. 2, 175--197.
\bibitem{Hi-Ki1}
K. Hikami and A.N. Kirillov, Torus knot and minimal model, \emph{Phys. Lett. B} {\bf 575} (2003), no. 3-4, 343--348.
\bibitem{Hi-Ki2}
K. Hikami and A.N. Kirillov, Hypergeometric generating function of $L$-function, Slater's identities, and quantum invariant, \emph{St. Petersburg Math. J.} {\bf 17} (2006), no. 1, 143--156.
\bibitem{Lo.5}
J. Lovejoy, Quantum $q$-series identities, \emph{Hardy-Ramanujan J.} {\bf 44} (2021), 61--73.
\bibitem{Lo1}
J. Lovejoy, Bailey pairs and indefinite quadratic forms, II. False indefinite theta functions, \emph{Res. Number Theory}, to appear.
\bibitem{Lo-Os1}
J. Lovejoy and R. Osburn, Mock theta double sums, \emph{Glasgow Math. J.} {\bf 59} (2017), 323--348.
\bibitem{Mc1}
J. McLaughlin, \emph{Topics and methods in $q$-series}, Monographs in Number Theory, 8, World Scientific Publishing Co. Pte. Ltd., Hackensack, NJ, 2018.
\bibitem{Sl1}
L.J. Slater, A new proof of Rogers's transformations of infinite series, \emph{Proc. London Math. Soc. (2)} {\bf 53} (1951), 460--475.
\bibitem{St1}
A. Straub, Congruences for Fishburn numbers modulo prime powers, \emph{Int. J. Number Theory} {\bf 11} (2015), 1679--1690.
\bibitem{Wa1}
S. O. Warnaar, \emph{50 years of Bailey's lemma}, Algebraic combinatorics and applications (G{\"o}{\ss}weinstein, 1999), 333--347, Springer, Berlin, 2001.
\bibitem{Za1}
D. Zagier, Vassiliev invariants and a strange identity related to the Dedekind eta-function, \emph{Topology} {\bf 40} (2001), no. 5, 945--960



\end{thebibliography}
\end{document}